\newcommand\cyr{%
 \renewcommand\rmdefault{wncyr}%
 \renewcommand\sfdefault{wncyss}%
 \renewcommand\encodingdefault{OT2}%
\normalfont\selectfont} \DeclareTextFontCommand{\textcyr}{\cyr}
\newtheorem{theorem}{Theorem}
\newtheorem{lemma}[theorem]{Lemma}
\newtheorem{corollary}[theorem]{Corollary}
\newtheorem{proposition}[theorem]{Proposition}
\newtheorem{remark}[theorem]{Remark}
\newtheorem{claim}[theorem]{Claim}
\def\mod{\operatorname{mod}}
\long\def\symbolfootnote[#1]#2{\begingroup%
\def\thefootnote{\fnsymbol{footnote}}\footnote[#1]{#2}\endgroup}
\title{New Restriction Estimates for the 3-d Paraboloid over Finite Fields}
\author{Mark Lewko\thanks{Supported by a NSF Postdoctoral Fellowship, DMS-1204206.}}
\date{}
\begin{document}

\maketitle

\begin{abstract} We improve the range of exponents for the restriction problem for the 3-d paraboloid over finite fields. The key new ingredient is a variant of the Bourgain-Katz-Tao finite field incidence theorem derived from sum-product estimates. In prime order fields, we give an explicit relationship between the exponent in this incidence theorem and restriction estimates for the paraboloid.
\end{abstract}

\symbolfootnote[0]{2010 Mathematics Subject Classification. 42B20}

\section{Introduction}Given a hypersurface $ S \subset \mathbb{R}^n$ with measure $d\sigma$, the (Euclidean) restriction problem asks one to determine for which $p$ and $q$ does the following inequality hold:
\[ ||(fd \sigma)^{\vee} ||_{L^q(\mathbb{R}^n)} \ll_{p,q} ||f ||_{L^{p}(S, d\sigma)}.\]
If one takes $S$ to be the the sphere or paraboloid and $n \geq 3$, this is a challenging open problem. We refer the reader to \cite{T} for a more thorough account of work related to these problems (through 2003).

In 2002, Mockenhaupt and Tao formulated a finite field analogue of this problem. Let $\mathbb{F}$ denote a finite field, and $\mathbb{F}^n$ the $n$-dimensional cartesian product of $\mathbb{F}$. We let $e(\cdot)$ denote a non-principal character on $\mathbb{F}$ and for $x=(x_1,\ldots,x_n),\xi=(\xi_1,\ldots,\xi_n) \in \mathbb{F}^n$ we define the dot product by $x\cdot \xi= x_1 \xi_1 + x_2 \xi_2 + \ldots +x_n \xi_n$.  We endow the vector space $\mathbb{F}^{n}$ with the counting measure $dx$, and its (isomorphic) dual space $\mathbb{F}_{*}^n$ with the normalized counting measure $d\xi$ that assigns measure $|\mathbb{F}^n|^{-1}$ to each point. For a complex-valued function $f$ on $\mathbb{F}^n$ the $L^p$ norm is given by $ ||f||_{L^p(\mathbb{F}^n,dx)} = \left(\sum_{x \in \mathbb{F}^n} |f(x)|^p \right)^{1/p} $ and its Fourier transform (defined on the dual space $\mathbb{F}^n_{*}$)
\[ \hat{f}(\xi) = \sum_{x \in \mathbb{F}^n} f(x) e(-x \cdot \xi)dx.\]
Given a non-empty set $S \subset \mathbb{F}^n_{*}$, we may define the measure (`normalized surface area') $d\sigma$ which assigns the measure $|S|^{-1}$ to each point so that
\[ \sum_{\xi \in S} g(\xi) d\sigma := \frac{1}{|S|}\sum_{\xi \in S} g(\xi)\]
and the inverse Fourier transform on $S$ is given by
\[  (gd \sigma)^{\vee}(x) = \frac{1}{|S|} \sum_{\xi\in S} g(\xi)e(x \cdot \xi ).\]

For $1 \leq p,q \leq \infty$, we define $\mathcal{R}^{*}(p\rightarrow q)$ to be the best constant such that, for all $g$ supported on $S$, we have
\[|| (gd\sigma)^{\vee}||_{L^q(\mathbb{F}^{n},dx) } \leq \mathcal{R}^{*}(p\rightarrow q) ||g||_{L^{p}(S,d\sigma)}.\]
By duality, this is also equal to the best constant in the inequality
\[||\hat{f}||_{L^{p'}(S,d\sigma)} \leq  \mathcal{R}^{*}(p\rightarrow q)||f||_{L^{q'}(\mathbb{F}^n,dx) }.\]
When $S$ is an algebraic variety (and hence has a natural interpretation over every finite field) the restriction problem for $S$ seeks to classify the pairs of exponents $(p,q)$ for which $\mathcal{R}^{*}(p\rightarrow q)$ is bounded independent of the field size. In the finite field setting, the restriction phenomenon has already received a fair amount of attention, see \cite{IK}, \cite{IKs}, \cite{IKq} \cite{Koh1}, \cite{lewko}, \cite{MT}.

Here we will be interested in the case of the $3$ dimensional paraboloid, $S=\{(\omega,\omega \cdot \omega)  :\omega \in \mathbb{F}_{*}^2  \}$ (which we'll denote as $\mathcal{P}$). The Stein-Tomas method (described in the next section) gives $\mathcal{R}^{*}(2 \rightarrow 4) \ll 1$. In general, this is the best one can hope for with $p=2$, which can be seen by testing the extension operator $(fd\sigma)^{\vee}$ on the characteristic function of the affine subspace $ (\xi,i \xi, 0) \subset \mathcal{P} $ if $-1=i^2$ is the square of an element $i \in \mathbb{F}$. If such an $i$ does not exist (that is the characteristic of the field is $p = 3 \mod 4$), then it is conjectured that $\mathcal{R}^{*}\left (2 \rightarrow 3 \right) \ll 1$. In this direction, Mockenhaupt and Tao were able to show that $\mathcal{R}^{*}\left (2 \rightarrow \frac{18}{5}+\epsilon \right) \ll 1$. This has been improved (see \cite{lewko}) to $\mathcal{R}^{*}(2 \rightarrow \frac{18}{5}) \ll 1$ using a bilinear version of the Mockenhaupt and Tao argument (this was first proved by Bennett, Carbery, Garrigos, and Wright in unpublished work). Apart from this endpoint, however, the exponent of $\frac{18}{5}$ has stood for over a decade. Our first result is the following:

\begin{theorem}\label{thm:general}Let $\mathbb{F}$ be a field such that $-1$ is not a square. There exists a $\delta >0$ such that
$\mathcal{R}^{*}(2 \rightarrow \frac{18}{5} - \delta) \ll 1$.
\end{theorem}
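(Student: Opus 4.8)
The plan is to revisit the Mockenhaupt--Tao argument for $\mathcal P \subset \mathbb F^3$ (in the form that produces the exponent $\tfrac{18}{5}$, following also Bennett--Carbery--Garrigos--Wright and \cite{lewko}) and to replace the single combinatorial step in it that is presently carried out by Cauchy--Schwarz — an incidence count — by a genuine power improvement coming from the Bourgain--Katz--Tao theorem and the sum--product phenomenon. Since Theorem~\ref{thm:general} only asks for \emph{some} $\delta>0$, it suffices to use the linear Mockenhaupt--Tao scheme (the endpoint and $\epsilon$-refinements being irrelevant here) together with \emph{any} power saving in the incidence bound.

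First I would carry out the standard reductions. By duality and a dyadic pigeonholing it suffices to show: for $g$ the indicator of a subset $E\subset\mathcal P$ and $\lambda$ a dyadic height, the superlevel set $A_\lambda=\{\,x\in\mathbb F^3:|(g\,d\sigma)^\vee(x)|\ge\lambda\,\}$ obeys $\lambda^{q}\,|A_\lambda|\ll(|E|/|\mathcal P|)^{q/2}$ uniformly in $\lambda$, $E$, $\mathbb F$. Expanding $\sum_{x\in A_\lambda}|(g\,d\sigma)^\vee(x)|^{2}$ and inserting the exact evaluation of the exponential sum $\sum_{\xi\in\mathcal P}e(x\cdot\xi)$ — the Gauss-sum computation recalled in connection with Stein--Tomas, namely that it equals $|\mathbb F|^{2}$ at the origin, is $O(|\mathbb F|)$ off a hyperplane, and vanishes on that hyperplane away from the origin — one reduces, after a Cauchy--Schwarz, to estimating the number of incidences between a point set and a family of lines in a plane $\mathbb F^{2}$ (the lines coming from the $2$-plane sections of $\mathcal P$ implicit in that kernel; equivalently one may first pass to the bilinear form $(g_i d\sigma)^\vee(g_j d\sigma)^\vee$ over transverse parabolic caps and meet the incidence problem there, or phrase it as a sum--product count on the circles $|\omega|^{2}=c$). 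Feeding the trivial incidence bound $I(P,L)\ll|P|\,|L|^{1/2}+|L|\,|P|^{1/2}$ into this scheme reproduces the exponent $\tfrac{18}{5}$.

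Next I would invoke a variant of the Bourgain--Katz--Tao theorem: an absolute $\epsilon_0>0$ with $I(P,L)\ll N^{3/2-\epsilon_0}$ whenever $|P|,|L|\le N$ and $N\le|\mathbb F|^{2-\delta_0}$ (together with the companion sum--product-type estimates actually needed), the mechanism being that when the configuration is not too large no honest line or subfield can saturate the Cauchy--Schwarz bound. Here one must check that the point/line families output by the reduction do satisfy the hypotheses — that no line meets an atypically large share of the points (bounding the high-multiplicity part directly), that $N$ lies in the admissible range, and that the complementary regime $N\ge|\mathbb F|^{2-\delta_0}$, in which $E$ or $A_\lambda$ is a large fraction of its ambient space, is disposed of by the elementary $L^{2}$ and $L^{4}$ (Stein--Tomas) bounds; the hypothesis that $-1$ is not a square, already needed even to get below the Stein--Tomas exponent $4$, should continue to be what rules out the most degenerate configurations in the reduction.

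Finally, propagating the improvement: substituting $N^{3/2-\epsilon_0}$ for $N^{3/2}$ through the reduction of the second step upgrades the level-set estimate to $\lambda^{q}|A_\lambda|\ll(|E|/|\mathcal P|)^{q/2}$ for some $q=\tfrac{18}{5}-\delta$ with $\delta=\delta(\epsilon_0)>0$, and summing the $O(\log|\mathbb F|)$ dyadic pieces costs nothing since the gain is a power of $\lambda$ rather than a logarithm. The main obstacle is the reduction itself: engineering it so that the quantity being bounded is \emph{honestly} an incidence (or sum--product) count to which a Bourgain--Katz--Tao-type theorem applies, and then verifying the cardinality and non-degeneracy hypotheses and dispatching the degenerate and large-set regimes. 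A secondary but essential point is quantitative bookkeeping — the incidence theorem must come with an explicit power saving and the correct dependence on $N$ and $|\mathbb F|$ — so that the final exponent is a true improvement and not merely an artifact of $\epsilon$-removal.
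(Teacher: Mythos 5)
Your high-level strategy --- run the Mockenhaupt--Tao $L^4$-and-Stein--Tomas scheme and replace the trivial Cauchy--Schwarz incidence count by a Bourgain--Katz--Tao-type power saving --- is the right one and matches the paper. The vague part of your reduction (expanding the $L^4$ norm, using the Gauss-sum evaluation of the kernel, pseudo-conformal/Galilean rescaling to land on a planar point--line incidence count) is close to what the paper does, although the paper carries it out through a rather specific ``regular function'' decomposition (pigeonhole the level set of $f$ so that all its horizontal slices in $\mathbb F^3$ have comparable cardinality, isolate the critical case $|{\rm supp}\,f|\approx|\mathbb F|^{9/5}$ with most slices of size $\approx|\mathbb F|^{4/5}$) before the incidence machine can be applied slice-by-slice.

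The genuine gap is the incidence theorem you propose to feed into this scheme. You posit an absolute $\epsilon_0>0$ with $|I(P,L)|\ll N^{3/2-\epsilon_0}$ whenever $|P|,|L|\le N\le|\mathbb F|^{2-\delta_0}$. \emph{That statement is false over general finite fields} --- which is exactly the generality of Theorem~\ref{thm:general}. Take $\mathbb F=\mathbb F_{p^3}$ with $p\equiv 3\pmod 4$, so that $-1$ is not a square in $\mathbb F$ but the subfield $G=\mathbb F_p$ exists. Let $P=G\times G$ and let $L$ be the set of lines with slope and intercept in $G$. Then $N=|P|=|L|=p^2=|\mathbb F|^{2/3}$ sits comfortably below $|\mathbb F|^{2-\delta_0}$, yet every $\ell\in L$ carries $\approx|G|=N^{1/2}$ points of $P$, so $|I(P,L)|\gg N^{3/2}$. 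Thus the hypothesis ``$-1$ is not a square'' does \emph{not}, as you suggest, rule out the degenerate configurations in the incidence step; in the paper that hypothesis is used earlier and for a different purpose (to guarantee the lines $\ell(y)=\{x: x\cdot y=y\cdot y\}$ are pairwise distinct, i.e.\ that $y\mapsto\ell(y)$ is injective, in the $L^4$ expansion).

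What the paper actually proves and uses (Theorem~\ref{thm:inc}) is a \emph{structural dichotomy}: if $|I(P,L)|\ge|\mathbb F|^{-\epsilon}N^{3/2}$ then, after a projective transformation, a constant proportion of $P$ sits inside $S_1\times S_2$ where each $S_i$ is a coset $a_i+b_i G$ of a subfield $G\subseteq\mathbb F$ with $|G|\lesssim|\mathbb F|^{O(\epsilon)}N^{1/2}$. The crucial extra observation, which your proposal is missing, is that subfield sizes are rigidly quantized: if $|\mathbb F|=p^\ell$ then $|G|=p^j$ with $j\mid\ell$, i.e.\ $|G|=|\mathbb F|^{1/k}$ for some integer $k\ge 1$. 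The reduction to regular functions forces the relevant point sets to have $N\approx|\mathbb F|^{4/5}$, hence the dichotomy would require $|G|\approx|\mathbb F|^{2/5}$; but $2/5$ is bounded away (uniformly in $\ell$) from every $1/k$, so the subfield branch of the dichotomy can never fire at this size, and the incidence improvement is unconditional there. This is what lets the paper get a $\delta>0$ in all finite fields with $-1$ a non-square, not just prime fields. Without this observation (or a correct substitute for your false blanket incidence claim), your argument does not close.
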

In the Euclidean setting, it is known that the restriction problem is closely connected with the Kakeya problem in geometric measure theory. Roughly speaking, the Kakeya (family of) problems seek to quantify the extent to which tubes in Euclidean space can overlap. Indeed, an affirmative solution to the restriction problem implies an affirmative solution to the Kakeya (maximal) problem. Conversely, much of the progress on the Euclidean restriction problem has made use of progress on the Kakeya (maximal) problem. We refer the reader to \cite{T} for a more detailed discussion.  In the finite field setting, however, there seems to be less of a connection between the restriction and Kakeya problems. Indeed, in 2008 Dvir \cite{Dvir2} proved the full finite field Kakeya conjecture, however neither his result nor its proof technique have yet yielded any progress on the finite field restriction problem.

\textbf{Note added}: The author recently found a formal connection between the finite field Kakeya and restriction conjectures. Indeed, it can be shown that the finite field restriction conjecture (in fields in which $-1$ is a square) implies the finite field Kakeya conjecture (Dvir's theorem). Moreover, the finite field Kakeya maximal operator estimates of Ellenberg, Oberlin and Tao \cite{EOT} proven using Dvir's polynomial method can in turn be used to make progress on the finite field restriction conjecture. See \cite{lewkoKakeya} for details.

The main contribution of our current work is to connect the finite field restriction problem not with the finite field Kakeya problem, but with the related finite field Szemer\'{e}di-Trotter incidence problem. Thus, the main new ingredient here will be (a variant of) the finite field Szemer\'{e}di-Trotter-type theorem of Bourgain, Katz, and Tao \cite{BKT}. Let $P$ be a set of points and $L$ a set of lines in $\mathbb{F}^2$. We define the set of incidences as
\[I(P,L) := \{ (x,\ell) \in P \times L : x \in \ell\}.\]
An easy and well-known argument gives $\left|I(P,L)\right| \leq \min\left( |P|^{1/2}|L| +|P|, |P||L|^{1/2}  +|L| \right).$ In their breakthrough paper \cite{BKT}, Bourgain, Katz, and Tao proved that if $\mathbb{F}$ is a prime order finite field and $N = |\mathbb{F}|^\beta$ for $0<\beta < 2$ then for any set of points $P$ and lines $L$ such that $|L|=|P|=N$ one may improve the previous estimate to:
\[ \left|I(P,L)\right| \ll N^{3/2-\epsilon}\]
for some $\epsilon(\beta)>0$.  If we wish  to quantify the exponent in our restriction theorem, we will need a quantitative version of the above result. Here, we introduce the notation $\mathcal{I}(\alpha,\beta) $
to denote the claim that if $N \leq |\mathbb{F}|^\beta$ then for all sets of points $P$ and $L$ such that $|P|=|L|=N$, we have
\[ \left|I(P,L)\right| \ll_{\alpha,\beta} N^{\alpha}.\]
In prime order finite fields, we give the following relationship between $\mathcal{I}(\alpha,\beta) $ and restriction estimates:
\begin{theorem}\label{thm:incTores}Let $\mathbb{F}$ be a prime order finite field such that $-1$ is not a square. Furthermore, assume $\mathcal{I}\left( \alpha, \frac{8(\alpha -3)(\alpha -2) }{9\alpha -6  }\right)$. Then
\[\mathcal{R}^{*}\left(2\rightarrow \frac{12 - 2 \alpha}{4-\alpha} + \epsilon \right) \ll 1  \]
for all $\epsilon >0$.
\end{theorem}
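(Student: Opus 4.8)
\emph{Strategy.} The plan is to run the bilinear Stein--Tomas/Mockenhaupt--Tao argument that currently yields $\mathcal{R}^{*}(2\rightarrow\tfrac{18}{5})\ll1$, but to replace the one step in it where the Cauchy--Schwarz (``trivial incidence'') estimate $|I(P,L)|\ll N^{3/2}$ is invoked by the hypothesis $\mathcal{I}(\alpha,\beta)$, tracking carefully how the exponents move. First I would perform the standard reductions: by duality and dyadic pigeonholing it suffices to prove a restricted strong-type estimate, namely that for $g=\mathbf{1}_{A}$ with $A\subseteq\mathcal{P}$, $|A|=M$, and $\Omega_\lambda=\{x\in\mathbb{F}^{3}:|(gd\sigma)^{\vee}(x)|\sim\lambda\}$ one has $\lambda^{q}|\Omega_\lambda|\ll M^{q/2}|\mathbb{F}|^{-q}$ for $q=\tfrac{12-2\alpha}{4-\alpha}$ (logarithmic losses are harmless since we only claim the bound at $q+\epsilon$). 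Two inequalities are free: the Plancherel bound $\lambda^{2}|\Omega_\lambda|\ll M/|\mathbb{F}|$ and the Stein--Tomas bound $\lambda^{4}|\Omega_\lambda|\ll M^{2}|\mathbb{F}|^{-4}$. These interpolate to $q=4$ and no further, the two being comparable at the level $\lambda\sim(M|\mathbb{F}|^{-3})^{1/2}$; beating $4$ requires a third input precisely at that level.

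\emph{The combinatorial input.} Splitting $(gd\sigma)^{\vee}$ into caps and expanding $\|(gd\sigma)^{\vee}\|_{q}^{q}$ --- or, after a Whitney decomposition into transverse cap-pairs, the $L^{q/2}$-norm of the resulting bilinear form --- produces additive-energy counts on $\mathcal{P}$: the number of solutions of $\xi_{1}+\xi_{2}=\xi_{3}+\xi_{4}$ with the $\xi_{i}$ in subsets of $\mathcal{P}$. The arithmetic of the paraboloid turns these into incidence counts: with $\xi_{i}=(\omega_{i},\omega_{i}\cdot\omega_{i})$, the system $\xi_{1}+\xi_{2}=\xi_{3}+\xi_{4}$ forces $(\omega_{1}-\omega_{3})\cdot(\omega_{2}-\omega_{3})=0$, so every additive quadruple on $\mathcal{P}$ is a rectangle (four vertices, one right angle) in the base set $B=\pi(A)\subseteq\mathbb{F}^{2}$, and after dyadic pigeonholing on the relevant point- and line-multiplicities the number of such rectangles is dominated by $|I(P,L)|$ for an explicit point set $P$ and line set $L$ in $\mathbb{F}^{2}$ with $|P|\sim|L|\sim N$, where $N$ is an explicit monomial in $M,\lambda,|\mathbb{F}|$. (The hypothesis that $-1$ is not a square enters here: it guarantees that $\mathcal{P}$ carries no line, so the extension operator is genuinely curved and sub-$L^{4}$ behaviour is possible at all, and it keeps the resulting configurations non-degenerate.)

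\emph{Applying the incidence bound and optimizing.} In the relevant range of $(M,\lambda)$ one checks $N\le|\mathbb{F}|^{\beta}$, so $\mathcal{I}(\alpha,\beta)$ gives $|I(P,L)|\ll N^{\alpha}$; inserting this in place of $N^{3/2}$ and re-running the expansion produces a bound of the form $\lambda^{a}|\Omega_\lambda|^{b}\ll$ (explicit monomial in $M,|\mathbb{F}|$). Interpolating this with the Plancherel bound and optimizing over every pigeonholed parameter (cap size/Whitney scale, the multiplicity levels, and the cutoff on $M$) yields $\mathcal{R}^{*}(2\rightarrow q+\epsilon)\ll1$ with $q=\tfrac{12-2\alpha}{4-\alpha}$. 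The exponent $\beta=\tfrac{8(\alpha-3)(\alpha-2)}{9\alpha-6}$ should come out as exactly the largest size a point/line set attains at the worst-case $(M,\lambda)$ in this optimization, hence as the smallest scale to which $\mathcal{I}(\alpha,\cdot)$ must be assumed for the argument to close. As a consistency check, $\alpha=3/2$ gives $\beta=4/5$ and $q=18/5$, and the incidence step degenerates to Cauchy--Schwarz, so this is a genuine extension of the known argument.

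\emph{Main obstacle.} The hard part is the realization in the second paragraph: packaging the rectangle count on $\mathcal{P}$ as a \emph{balanced} point--line incidence problem, with $|P|\approx|L|\approx N$ and $N$ provably in the valid range $N\le|\mathbb{F}|^{\beta}$, and disposing by hand of the degenerate contributions --- concurrent lines, collinear points, and low-multiplicity terms --- that $\mathcal{I}(\alpha,\beta)$ does not control. A secondary difficulty is the bilinear-to-linear passage for $q<4$: the orthogonality between distinct cap--cap products that makes this painless for $q\ge4$ is unavailable, so the summation over caps and scales must be organized so that the gain coming from $\mathcal{I}(\alpha,\beta)$ actually survives it rather than being swamped.
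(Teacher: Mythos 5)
Your combinatorial core is essentially right: Proposition~\ref{prop:L4} of the paper is exactly the ``additive quadruple on $\mathcal{P}$ becomes a point--line incidence'' observation you describe, implemented via the Galilean transformation so that $a'-d'\in\mathcal{P}$ turns into $x\in\ell(y)$, with the hypothesis that $-1$ is a non-square serving precisely to make $y\mapsto\ell(y)$ injective; and your sanity check that $\alpha=3/2$ should recover $18/5$ is correct. But this step on its own only yields an improved $L^4$ extension bound, $\|(1_E d\sigma)^\vee\|_{L^4}\ll |E|^{(1+\alpha)/4}|\mathbb{F}|^{-5/4}$ for $E\subseteq\mathcal{P}$ in the relevant size range. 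The crux of the theorem --- which you correctly flag as your ``main obstacle'' but do not resolve --- is how to turn this $L^4$-level gain into a gain for exponents strictly between $3$ and $4$. Your proposed route (a Whitney/bilinear cap decomposition on the variety side, with an $L^{q/2}$ bilinear form for $q<4$) is not the paper's, and you yourself note that the bilinear-to-linear passage lacks the orthogonality that makes it work at $q\geq 4$. That is a genuine gap, not a technicality.

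The paper avoids the bilinear route entirely by working on the physical side. After dualizing to the restriction form and dyadically pigeonholing, one reduces to $(\gamma,s,t)$-regular functions on $\mathbb{F}^3$: functions of size $\sim 1$ on a set $A$ with $|A|=|\mathbb{F}|^\gamma$, supported on $\sim|\mathbb{F}|^t$ hyperplanes $\{x_3=z\}$ with each non-empty slice of size $\sim|\mathbb{F}|^s$ (Lemma~\ref{lem:regular}). The central computation (Lemma~\ref{lem:mt1}) then handles $h*K$ slice by slice: for each fixed $z$, the pseudo-conformal change of variables converts $\|h_z*K\|_{L^4(\mathbb{F}^3)}$ into $\|(h_z d\sigma)^\vee\|_{L^4}$ where $h_z$ is reinterpreted as a function on $\mathcal{P}$ of support size $\sim|\mathbb{F}|^s$, so Proposition~\ref{prop:L4} applies at scale $s$; summing over the $|\mathbb{F}|^t$ slices by the triangle inequality and pairing via $\langle h,h*(d\sigma)^\vee\rangle$ gives an $L^2$ restriction bound with exponent depending on $s$, $t$, $\alpha$. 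The optimization you sketch is then carried out over $(\gamma,s,t)$ rather than $(M,\lambda)$, balanced against Corollary~\ref{cor:stdecay} for small $\gamma$ and against the trivial incidence bound for large $\gamma$; the two crossover values $\gamma=\tfrac{6-\alpha}{4-\alpha}$ and $\gamma=\tfrac{6-\alpha}{3\alpha-2}$ produce exactly the exponent $q=\tfrac{12-2\alpha}{4-\alpha}$ and the scale $\beta=\tfrac{8(\alpha-3)(\alpha-2)}{9\alpha-6}$ in the statement. This slice-and-pseudo-conform step is the idea your proposal is missing: it is what lets an $L^4$ improvement on small subsets of $\mathcal{P}$ propagate to a sub-$L^4$ restriction estimate without any bilinear orthogonality.
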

It appears the best result to date is $\mathcal{I}(\frac{3}{2}-\frac{1}{662}+\epsilon, 1)$ for all $\epsilon >0$ due to Jones \cite{Jones3} (which improves on prior explicit estimates of Helfgott and Rudnev \cite{HR} and Jones \cite{Jones1}). Taking $\alpha = \frac{3}{2}-\frac{1}{662} = \frac{496}{331}$, we have that  $\frac{8(\alpha -3)(\alpha -2) }{9\alpha -6  } = \frac{47144}{68587} \leq .805 \leq 1$. Thus, we derive the following restriction theorem:
\begin{corollary}Let $\mathbb{F}$ be a prime order finite field such that $-1$ is not a square. Then,
$$\mathcal{R}^{*}\left(2\rightarrow \frac{18}{5} - \frac{1}{1035} + \epsilon\right) \ll 1$$
for any $\epsilon >0$.
\end{corollary}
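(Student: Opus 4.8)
The plan is to derive this corollary as an immediate consequence of Theorem~\ref{thm:incTores}, fed with the strongest unconditional incidence estimate currently on record. Concretely, I would fix $\alpha_0 = \frac{3}{2} - \frac{1}{662} = \frac{496}{331}$, and for an auxiliary parameter $\epsilon_0 > 0$ to be sent to $0$ at the end, set $\alpha = \alpha_0 + \epsilon_0$. Jones's theorem \cite{Jones3} gives $\mathcal{I}(\alpha_0 + \epsilon_0, 1)$ for every $\epsilon_0 > 0$, i.e. the bound $|I(P,L)| \ll N^{\alpha}$ for all point/line sets of common cardinality $N \leq |\mathbb{F}|$ in a prime order field.

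First I would verify that this input already supplies exactly the hypothesis of Theorem~\ref{thm:incTores}, namely $\mathcal{I}\big(\alpha, \tfrac{8(\alpha-3)(\alpha-2)}{9\alpha-6}\big)$. Write $\beta(\alpha) = \tfrac{8(\alpha-3)(\alpha-2)}{9\alpha-6}$. For $\alpha$ near $\alpha_0 = \tfrac{496}{331}$ one has $\alpha - 3 < 0$, $\alpha - 2 < 0$ and $9\alpha - 6 > 0$, so $\beta(\alpha) > 0$, and the direct computation recorded just before the corollary gives $\beta(\alpha_0) \leq 0.805 < 1$; since $\beta$ is continuous at $\alpha_0$ (the denominator $9\alpha_0 - 6$ is nonzero), we still have $\beta(\alpha) \leq 1$ for $\epsilon_0$ small enough. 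As the assertion $\mathcal{I}(\alpha, \beta')$ only weakens when $\beta'$ decreases — it constrains $N$ to the smaller range $N \leq |\mathbb{F}|^{\beta'} \leq |\mathbb{F}|$ — the bound $\mathcal{I}(\alpha, 1)$ implies $\mathcal{I}(\alpha, \beta(\alpha))$, and the hypothesis of Theorem~\ref{thm:incTores} is met.

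It then remains to invoke Theorem~\ref{thm:incTores}, which yields $\mathcal{R}^{*}\big(2 \to \tfrac{12-2\alpha}{4-\alpha} + \epsilon\big) \ll 1$ for all $\epsilon > 0$, and to finish with an arithmetic check. The function $\alpha \mapsto \tfrac{12-2\alpha}{4-\alpha}$ has derivative $\tfrac{4}{(4-\alpha)^2} > 0$, hence is increasing, and at $\alpha = \alpha_0$ it equals $\tfrac{2980/331}{828/331} = \tfrac{2980}{828} = \tfrac{745}{207} = \tfrac{18}{5} - \tfrac{1}{1035}$. Therefore, given any target $\epsilon' > 0$, choosing $\epsilon_0$ and $\epsilon$ small enough forces $\tfrac{12-2\alpha}{4-\alpha} + \epsilon < \tfrac{18}{5} - \tfrac{1}{1035} + \epsilon'$, which is the claimed $\mathcal{R}^{*}\big(2 \to \tfrac{18}{5} - \tfrac{1}{1035} + \epsilon'\big) \ll 1$.

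I do not expect any real obstacle here: all the analytic content lives in Theorem~\ref{thm:incTores} (and, upstream, in the new sum-product-based incidence estimate together with the Stein--Tomas/bilinear machinery), while the corollary itself is bookkeeping. The only points deserving a moment's care are the monotonicity and continuity arguments that let the auxiliary $\epsilon_0$ be absorbed into the final $\epsilon'$, and the observation that $\beta(\alpha_0) \leq 1$ — so that Jones's range $\beta = 1$ comfortably covers what Theorem~\ref{thm:incTores} demands.
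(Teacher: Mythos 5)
Your proposal is correct and follows essentially the same route the paper uses: cite Jones's $\mathcal{I}(\tfrac{3}{2}-\tfrac{1}{662}+\epsilon,1)$, observe that $\beta(\alpha)=\tfrac{8(\alpha-3)(\alpha-2)}{9\alpha-6}\le 1$ near $\alpha_0=\tfrac{496}{331}$ so that $\mathcal{I}(\alpha,1)$ implies the hypothesis of Theorem~\ref{thm:incTores}, then evaluate $\tfrac{12-2\alpha_0}{4-\alpha_0}=\tfrac{745}{207}=\tfrac{18}{5}-\tfrac{1}{1035}$. The only (welcome) addition is that you make explicit the monotonicity/continuity bookkeeping that lets the auxiliary $\epsilon_0$ from Jones's theorem be absorbed into the final $\epsilon$, which the paper leaves implicit.
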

We note (see Remarks \ref{rem:alpha1} and \ref{rem:alpha2} below) that the conclusion of Theorem \ref{thm:incTores}, does not require the full strength of the claim $\mathcal{I}\left( \alpha, \frac{8(\alpha -3)(\alpha -2) }{9\alpha -6  }\right)$, but rather only $\mathcal{I}\left( \alpha, \beta \right)$ for $\beta =\frac{2}{4-\alpha}$ and then progressively weaker values of $\alpha$ as $\beta$ increases to $\frac{8(\alpha -3)(\alpha -2) }{9\alpha -6  }$ (at which point the trivial incidence theorem is sufficient). We avoid such a formulation for simplicity. One may check that if one can take $\alpha = \frac{4}{3}$ (which seems the best one can hope for) for sets of size $\leq |\mathbb{F}|^{\frac{40}{27}}$ (or, as noted, some weaker variant of this), then we would obtain
$$\mathcal{R}^{*}\left(2\rightarrow \frac{7}{2}+ \epsilon\right) \ll 1,$$
which still falls short of the full conjectured estimate $\mathcal{R}^{*}\left(2\rightarrow 3\right) \ll 1$. In the fourth section we show how to use a more complicated incidence theorem in fields not of prime order to obtain Theorem 1. Here we do not quantify the argument to produce an explicit value of $\delta$.
\section{The Stein-Tomas method}
In this section we will reprove the Stein-Tomas theorem from \cite{MT}, and derive some related estimates which we will need later. First recall the notion of Fourier dimension. Given $S \subset \mathbb{F}^n_{*}$ with normalized surface measure $d\sigma$, the inverse Fourier transform of $d\sigma$ is given by
\[(d\sigma)^{\vee}(x) = \frac{1}{|S|}\sum_{\xi \in S} e(x\cdot \xi).\]
Note that $(d\sigma)^{\vee}(0)=1$, however for certain $S$ we may hope that $|(d\sigma)^{\vee}(x)|$ is small for $x\neq 0$. In particular, we define the (Fourier) dimension of $S$ to be the largest $\tilde{d}$ such that
\[|(d\sigma)^{\vee}(x)| \leq |\mathbb{F}|^{-\tilde{d}/2}  \]
for all $x \neq 0$.  It is also convenient to define the Bochner-Riesz kernel $K$ associated to $S$ by $K(x) = (d\sigma)^{\vee}(x) - \delta_{0}(x)$ (where the delta function $\delta_{0}$ is defined to be $1$ at $0$ and $0$ otherwise). We will use the following well-known fact (see \cite{MT}) which follows from elementary Gauss sum estimates:
\begin{proposition}The Fourier dimension of the $3$-dimensional paraboloid is $2$. That is,
\[|(d\sigma)^{\vee}(x)| \ll |\mathbb{F}|^{-1} \]
for $x \neq 0$.
\end{proposition}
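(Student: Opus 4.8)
The plan is to compute $(d\sigma)^{\vee}$ directly as an exponential sum over $\mathbb{F}^2$ and observe that it factors as a product of two one-dimensional quadratic Gauss sums, each of which has the classical modulus $|\mathbb{F}|^{1/2}$. Write $q = |\mathbb{F}|$ and recall that $|\mathcal{P}| = q^2$. Parametrize a point of $\mathbb{F}^3_{*}$ as $x = (a,b,c)$ with $(a,b) \in \mathbb{F}^2$, $c \in \mathbb{F}$, and write $\omega = (\omega_1,\omega_2) \in \mathbb{F}^2$; then $x \cdot (\omega, \omega\cdot\omega) = a\omega_1 + b\omega_2 + c\omega_1^2 + c\omega_2^2$, so that
\[(d\sigma)^{\vee}(x) = \frac{1}{q^2}\sum_{\omega_1,\omega_2 \in \mathbb{F}} e\big(c\omega_1^2 + a\omega_1\big)\,e\big(c\omega_2^2 + b\omega_2\big) = \frac{1}{q^2}\, G(c,a)\, G(c,b),\]
where $G(c,t) := \sum_{s \in \mathbb{F}} e(cs^2 + ts)$.

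Next I would bound $G(c,t)$ in the two standard cases. Since the ambient assumptions of the paper already place us in odd characteristic (e.g.\ the running discussion of whether $-1$ is a square), completing the square is legitimate: if $c \neq 0$, then $cs^2 + ts = c\big(s + t(2c)^{-1}\big)^2 - t^2(4c)^{-1}$, and so $|G(c,t)| = \big|\sum_{s \in \mathbb{F}} e(cs^2)\big| = q^{1/2}$ by the classical evaluation of the quadratic Gauss sum. If instead $c = 0$ and $t \neq 0$, then $G(0,t) = \sum_{s \in \mathbb{F}} e(ts) = 0$ by orthogonality of characters.

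Finally I would combine these. If $c \neq 0$, both factors $G(c,a)$ and $G(c,b)$ have modulus $q^{1/2}$, hence $|(d\sigma)^{\vee}(x)| = q^{-2} \cdot q^{1/2} \cdot q^{1/2} = q^{-1}$. If $c = 0$ and $x \neq 0$, then $(a,b) \neq (0,0)$, so at least one of $G(0,a)$, $G(0,b)$ vanishes and $(d\sigma)^{\vee}(x) = 0$. In either case $|(d\sigma)^{\vee}(x)| \leq q^{-1} = |\mathbb{F}|^{-1}$, which is the asserted bound; moreover the value $q^{-1}$ is attained whenever $c \neq 0$, so the Fourier dimension is exactly $2$ (at least $2$ by the bound, at most $2$ by this sharpness).

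There is no substantive obstacle here: the only points deserving care are invoking the exact modulus $q^{1/2}$ of the quadratic Gauss sum and noting that completing the square requires odd characteristic, which is a standing hypothesis. I would also emphasize that the diagonal form $\omega\cdot\omega = \omega_1^2 + \omega_2^2$ is precisely what lets the sum separate into one-variable pieces, which is what makes the paraboloid (in any dimension) far more tractable than a general quadric.
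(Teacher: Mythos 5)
Your computation is correct and is exactly the standard Gauss-sum evaluation that the paper defers to Mockenhaupt--Tao without reproducing (the proposition is stated with a citation rather than a proof); indeed the paper implicitly uses the same factorization later, in the proof of Lemma~\ref{lem:mt1}, where it writes $K(\underline{x},x_3)=|\mathbb{F}|^{-2}S(x_3)^2 e(\underline{x}\cdot\underline{x}/4x_3)$ with $|S(x_3)|^2=|\mathbb{F}|$ for $x_3\neq 0$. Nothing to add.
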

We are now ready to revisit the Stein-Tomas argument. Let $f:\mathbb{F}^n \rightarrow \mathbb{C}$ and $S$ and $d\sigma$ be as above.
\begin{lemma}\label{Lem:STinfty}Let $p,q \geq 2$ and $0\leq \theta \leq 1$. Furthermore, let $||f||_{L^{\infty}} \leq \lambda$ and $||f||_{L^{(q/\theta)'}}=1$ ,then
\[ ||\hat{f}||_{L^{p'}(S,d\sigma)} \leq \mathcal{R}^{*}(p\rightarrow q) \lambda^{(1-\theta)/(q-\theta)}.\]
\end{lemma}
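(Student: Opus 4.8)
The plan is to deduce Lemma~\ref{Lem:STinfty} from the dual formulation of the definition of $\mathcal{R}^{*}(p\rightarrow q)$ together with the log-convexity (Hölder interpolation) of the $L^p(\mathbb{F}^n,dx)$ norms. Recall that, by duality, $\mathcal{R}^{*}(p\rightarrow q)$ is the best constant in
\[||\hat{f}||_{L^{p'}(S,d\sigma)} \leq \mathcal{R}^{*}(p\rightarrow q)\,||f||_{L^{q'}(\mathbb{F}^n,dx)}.\]
Hence it suffices to show that, under the two normalizing hypotheses $||f||_{L^{\infty}}\leq \lambda$ and $||f||_{L^{(q/\theta)'}}=1$, one has $||f||_{L^{q'}(\mathbb{F}^n,dx)} \leq \lambda^{(1-\theta)/(q-\theta)}$, and then substitute this into the displayed estimate.

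The second step is the interpolation. First I would note that for $q\geq 2$ and $\theta\in[0,1]$ the exponent $q'$ lies between $(q/\theta)'$ and $\infty$: indeed $(q/\theta)' = q/(q-\theta)$ increases from $1$ to $q' = q/(q-1)$ as $\theta$ runs over $[0,1]$ (with the convention that $\theta=0$ is read as $q/\theta=+\infty$, so $(q/\theta)'=1$), so $L^{q'}$ is a genuine interpolant of $L^{\infty}$ and $L^{(q/\theta)'}$. By Hölder's inequality one then has $||f||_{L^{q'}} \leq ||f||_{L^{\infty}}^{1-s}\,||f||_{L^{(q/\theta)'}}^{s}$, where $s$ is fixed by the relation $1/q' = s/(q/\theta)'$. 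A one-line computation gives $s = (q-1)/(q-\theta)$, hence $1-s = (1-\theta)/(q-\theta)$, and plugging in $||f||_{L^{\infty}}\leq \lambda$ and $||f||_{L^{(q/\theta)'}}=1$ yields exactly $||f||_{L^{q'}} \leq \lambda^{(1-\theta)/(q-\theta)}$.

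I do not expect a genuine obstacle here; the statement is a routine normalization-and-interpolation manoeuvre of the type that underlies the Stein--Tomas machine. The only points needing (minor) care are verifying that the interpolation parameter $s = (q-1)/(q-\theta)$ genuinely lies in $[0,1]$ over the full range $\theta\in[0,1]$, $q\geq 2$, and treating the degenerate endpoint $\theta=0$ correctly so that $L^{(q/\theta)'}$ is interpreted as $L^1$. Once these bookkeeping checks are in place, combining the interpolation bound with the dual definition of $\mathcal{R}^{*}(p\rightarrow q)$ closes the argument.
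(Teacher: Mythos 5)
Your proposal is correct and takes essentially the same route as the paper. The paper's displayed chain of inequalities — writing $|f|^{q/(q-1)}=|f|^{q/(q-\theta)}\,|f|^{q/(q-1)-q/(q-\theta)}$ and bounding the second factor by $\lambda^{q/(q-1)-q/(q-\theta)}$ — is precisely the direct computation underlying the log-convexity bound $\|f\|_{L^{q'}}\le\|f\|_{L^\infty}^{1-s}\|f\|_{L^{(q/\theta)'}}^{s}$ with $s=(q-1)/(q-\theta)$ that you invoke, followed by the same substitution into the dual formulation of $\mathcal{R}^*(p\to q)$.
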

\begin{proof}Letting $||f||_{L^{(q/\theta)'}}=1$ and using that $(q/\theta)' = \frac{q}{q-\theta}$ we have:
\[||f||_{q'} =  (\sum_{x \in \mathbb{F}^n} |f(x)|^{q/(q-1)})^{(q-1)/q} \leq \left(\sum_{x\in \mathbb{F}^n} |f(x)|^{q/(q-\theta)} \lambda^{q/(q-1) - q/(q-\theta)} \right)^{(q-1)/q} \leq \lambda^{(1-\theta)/(q-\theta)},\]
Thus
\[||\hat{f}||_{L^{p'}(S,d\sigma)} \leq \mathcal{R}^{*}(p \rightarrow q)||f||_{L^{q'}}  \leq \mathcal{R}^{*}(p \rightarrow q) \lambda^{(1-\theta)/(q-\theta)}\]
which implies the lemma.
\end{proof}
For convenience, let us also record the following related result:
\begin{lemma}\label{lem:charinfST}Let $1/2 \leq |f|\leq 1$ on its support $E$ satisfying $|E|=|\mathbb{F}|^{\gamma}$, and $\mathcal{R}^{*}(p\rightarrow q) \ll |\mathbb{F}|^{\alpha}$. Then
\[ ||\hat{f}||_{L^{p'}(S,d\sigma)} \ll ||f||_{L^{\frac{q\gamma}{q\gamma - \gamma + \alpha q}}(\mathbb{F}^3,dx)}.\]
\end{lemma}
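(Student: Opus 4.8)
The plan is to deduce this directly from the duality formulation of $\mathcal{R}^{*}(p\rightarrow q)$ recorded in the introduction, using only the crude fact that a function comparable to a characteristic function has all of its $L^{s}$ norms pinned down, up to absolute constants, by a single power of the size of its support.

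First I would record that, since $1/2 \leq |f| \leq 1$ on $E$ and $f$ vanishes off $E$, for every $s \in [1,\infty)$ one has
\[ 2^{-1}|\mathbb{F}|^{\gamma/s} \leq ||f||_{L^{s}(\mathbb{F}^{3},dx)} = \Big( \sum_{x\in E}|f(x)|^{s} \Big)^{1/s} \leq |\mathbb{F}|^{\gamma/s},\]
so that $||f||_{L^{s}(\mathbb{F}^{3},dx)}$ is comparable to $|\mathbb{F}|^{\gamma/s}$ with absolute implied constants. I would use this twice: once with $s=q'=q/(q-1)$, the exponent on the right-hand side of the duality inequality $||\hat f||_{L^{p'}(S,d\sigma)} \leq \mathcal{R}^{*}(p\rightarrow q)\,||f||_{L^{q'}(\mathbb{F}^{3},dx)}$, and once with the target exponent $s=r:=\frac{q\gamma}{q\gamma-\gamma+\alpha q}$.

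The second step is then immediate: combining the duality inequality with the hypothesis $\mathcal{R}^{*}(p\rightarrow q)\ll|\mathbb{F}|^{\alpha}$ and the case $s=q'$ of the display above yields
\[ ||\hat f||_{L^{p'}(S,d\sigma)} \ll |\mathbb{F}|^{\alpha}\,|\mathbb{F}|^{\gamma/q'} = |\mathbb{F}|^{\alpha+\gamma-\gamma/q}.\]
One could equally well route this through Lemma \ref{Lem:STinfty} after rescaling $f$ so that $||f||_{L^{(q/\theta)'}}=1$; the free parameter $\theta$ then cancels for characteristic-function-type inputs and one lands on the same bound. It remains to do the exponent bookkeeping: from $1/r = 1-1/q+\alpha/\gamma$ one gets $|\mathbb{F}|^{\gamma/r}=|\mathbb{F}|^{\gamma-\gamma/q+\alpha}$, which is exactly the quantity just obtained, and then the case $s=r$ of the first display converts this back into $||f||_{L^{r}(\mathbb{F}^{3},dx)}$, giving the claim.

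I do not expect a substantive obstacle here. The only point needing a little care is the algebraic identity $\gamma/r = \alpha+\gamma/q'$ matching the stated exponent $r$, together with the tacit assumption that one works in the regime $\gamma > \alpha q$ (equivalently $r\geq 1$), in which the right-hand side is a genuine norm and the estimate carries content.
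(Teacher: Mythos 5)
Your proposal is correct and is essentially the same argument as the paper's: both apply the duality formulation $||\hat f||_{L^{p'}(S,d\sigma)}\ll |\mathbb{F}|^{\alpha}||f||_{L^{q'}}$, use that $||f||_{L^{s}}\asymp |\mathbb{F}|^{\gamma/s}$ for a function comparable to $1_{E}$, and then choose $r$ so that $\gamma/r=\alpha+\gamma/q'$. Your extra remark about the regime $\gamma\geq \alpha q$ (so that $r\geq 1$) is a sensible clarification the paper leaves implicit.
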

\begin{proof}We have
\[ ||\hat{f}||_{L^{p'}(S,d\sigma)} \ll  |\mathbb{F}|^{\alpha} ||f||_{L^{q'}(\mathbb{F}^3,dx)} \ll |\mathbb{F}|^{\alpha} |\mathbb{F}|^{\frac{(q-1) \gamma}{q}}\]
\[= |\mathbb{F}|^{\gamma(1-1/q)+\alpha} \ll ||f||_{L^{\frac{q\gamma}{q\gamma - \gamma + \alpha q}}(\mathbb{F}^3,dx)}. \]

\end{proof}

\begin{lemma}\label{Lem:STsupp}Let $p,q \geq 2$ and $0\leq \theta \leq 1$. Let $ |f| \geq \lambda$ and $||f||_{L^{(q/\theta)'}}=1$ then
\[||\hat{f}||_{L^{p'}(S,d\sigma)} \ll 1+  ||K||_{L^{\infty}(\mathbb{F}^n, dx)}^{1/2} \lambda^{-\theta/(q-\theta)}.\]
\end{lemma}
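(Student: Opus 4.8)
The plan is to run the classical Stein--Tomas argument exactly as in the proof of the Stein--Tomas theorem above: first produce an $L^{2}(S,d\sigma)$ extension bound controlled by $\|K\|_{L^{\infty}}$, then descend from $L^{2}(S,d\sigma)$ to $L^{p'}(S,d\sigma)$ using that $d\sigma$ has total mass one and $p'\le 2$, and finally convert the hypotheses $|f|\ge\lambda$ and $\|f\|_{L^{(q/\theta)'}}=1$ into bounds on the two norms $\|f\|_{L^{2}}$ and $\|f\|_{L^{1}}$ that appear in the $L^{2}$ estimate.

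First I would establish the $L^{2}$ bound. Expanding $|\widehat f(\xi)|^{2}$, summing against $d\sigma$, and using $(d\sigma)^{\vee}=\delta_{0}+K$ yields
\[
\|\widehat f\|_{L^{2}(S,d\sigma)}^{2}=\sum_{x,y}f(x)\overline{f(y)}(d\sigma)^{\vee}(y-x)=\|f\|_{L^{2}(\mathbb F^{n},dx)}^{2}+\sum_{x,y}f(x)\overline{f(y)}K(y-x),
\]
and the trivial estimate $\bigl|\sum_{x,y}f(x)\overline{f(y)}K(y-x)\bigr|\le\|K\|_{L^{\infty}(\mathbb F^{n},dx)}\|f\|_{L^{1}(\mathbb F^{n},dx)}^{2}$ gives
\[
\|\widehat f\|_{L^{2}(S,d\sigma)}^{2}\le\|f\|_{L^{2}}^{2}+\|K\|_{L^{\infty}}\|f\|_{L^{1}}^{2}.
\]

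Next I would read off the two input norms. Since $q\ge 2$ and $0\le\theta\le 1$, the exponent $(q/\theta)'=q/(q-\theta)$ lies in $[1,2]$, so (as $dx$ is counting measure) the normalization gives $\|f\|_{L^{2}}\le\|f\|_{L^{(q/\theta)'}}=1$. For the $L^{1}$ norm, let $E$ be the support of $f$; Hölder with exponents $(q/\theta)'$ and $q/\theta$ gives $\|f\|_{L^{1}}\le |E|^{\theta/q}$, while $1=\sum_{x\in E}|f(x)|^{(q/\theta)'}\ge\lambda^{(q/\theta)'}|E|$ forces $|E|\le\lambda^{-q/(q-\theta)}$, so $\|f\|_{L^{1}}\le\lambda^{-\theta/(q-\theta)}$. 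Substituting both into the displayed inequality, taking square roots and using $\sqrt{a+b}\le\sqrt a+\sqrt b$ gives $\|\widehat f\|_{L^{2}(S,d\sigma)}\le 1+\|K\|_{L^{\infty}}^{1/2}\lambda^{-\theta/(q-\theta)}$; since $d\sigma$ is a probability measure and $p'\le 2$, we have $\|\widehat f\|_{L^{p'}(S,d\sigma)}\le\|\widehat f\|_{L^{2}(S,d\sigma)}$, which is the claim.

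I do not anticipate a real obstacle here: this is a routine variant of Stein--Tomas. The only mildly delicate point is the bookkeeping with exponents — one must check that $(q/\theta)'\le 2$ (so the counting-measure nesting $\|f\|_{L^{2}}\le\|f\|_{L^{(q/\theta)'}}$ runs the right way) and that $p'\le 2$ (so the probability-measure nesting $\|\cdot\|_{L^{p'}(d\sigma)}\le\|\cdot\|_{L^{2}(d\sigma)}$ runs the right way), both of which are immediate from $p,q\ge 2$ and $0\le\theta\le 1$; the degenerate case $\theta=0$, where $(q/\theta)'=1$ and the exponent $-\theta/(q-\theta)$ vanishes, is consistent and needs no separate treatment.
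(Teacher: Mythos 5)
Your proof is correct and follows essentially the same route as the paper: reduce to $L^{2}(S,d\sigma)$ via $p'\le 2$, expand the $L^{2}$ norm using $(d\sigma)^{\vee}=\delta_{0}+K$ to get $\|f\|_{L^{2}}^{2}+\|K\|_{L^{\infty}}\|f\|_{L^{1}}^{2}$, then bound $\|f\|_{L^{2}}\le 1$ by nesting and $\|f\|_{L^{1}}\le\lambda^{-\theta/(q-\theta)}$ via the support-size estimate and H\"older. The only cosmetic difference is that you work with $\|K\|_{L^{\infty}}$ directly, where the paper invokes Young's inequality and substitutes the Fourier-dimension bound $|\mathbb{F}|^{-\tilde d/2}$ for it.
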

\begin{proof}We decompose $(d\sigma)^{\vee} = \delta_{0}+ K$ where $\delta_{0}(x)$ is $1$ for $x=0$ and $0$ otherwise. By Plancherel we have
\[||\hat{f}||_{L^{p'}(S,d\sigma)}^2 \leq ||\hat{f}||_{L^{2}(S,d\sigma)}^2 \leq |\left<f, f*(d\sigma)^{\vee} \right>|.\]
By Young's inequality we then have
\[\leq ||f||_{L^2(\mathbb{F}^n, dx) }^2 + |F|^{-\tilde{d}/2}||f||_{L^1(\mathbb{F}^n,dx)}^{2}.\]
Notice that $||f||_{L^2(\mathbb{F}^3, dx) } \leq ||f||_{L^{(q/\theta)'}}=1$. Now $|\text{supp}(f)|^{(q-\theta)/q} \lambda \leq 1$ so $|\text{supp}(f)| \leq \lambda^{-q/(q-\theta)}$
\[ \sum_{x \in \mathbb{F}^n} |f(x)| \leq (\sum_{x \in \mathbb{F}^n} |f(x)|^{q/(q-\theta)})^{(q-\theta)/q} |\text{supp}(f)|^{\theta/q} \leq \lambda^{-\theta/(q-\theta)}.  \]
Thus we have
\[||\hat{f}||_{L^{p'}(S,d\sigma)} \ll 1+  ||K||_{L^{\infty}(_{\mathbb{F}^n}, dx)}^{1/2} \lambda^{-\theta/(q-\theta)}   \]
which completes the proof of the claim.
\end{proof}
Taking $\lambda= | \mathbb{F}|^{-\tilde{d}(q-\theta)/4} (\mathcal{R}^{*}(p \rightarrow q))^{-(q-\theta)}$ in the two lemmas above, we conclude the formulation of Stein-Tomas theorem given in \cite{MT}:
\begin{lemma}\label{lem:mtST}Let $p,q \geq 2$ and assume that $S$ has Fourier dimension $\tilde{d} >0$. Then for any $0 < \theta < 1$ we have that
\[\mathcal{R}^{*}(p \rightarrow q/\theta) \ll 1 +  \mathcal{R}^{*}(p \rightarrow q)^{\theta} |\mathbb{F}|^{\frac{-\tilde{d}(1-\theta)}{4}}.\]
\end{lemma}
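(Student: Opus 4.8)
The plan is to combine duality with a pigeonhole decomposition of $f$ into a ``small'' part and a ``large'' part, estimating each piece by one of the two preceding lemmas and then optimizing a threshold parameter. By the duality formulation recorded in the introduction, $\mathcal{R}^{*}(p\to q/\theta)$ agrees (up to absolute constants) with the best constant $C$ for which $\|\hat f\|_{L^{p'}(S,d\sigma)}\le C\,\|f\|_{L^{(q/\theta)'}(\mathbb{F}^n,dx)}$ holds for all $f$, so it suffices to fix $f$ with $\|f\|_{L^{(q/\theta)'}}=1$ and bound $\|\hat f\|_{L^{p'}(S,d\sigma)}$.

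First I would pick a threshold $\lambda>0$, to be chosen at the end, and write $f=g+h$ with $g:=f\,\mathbf 1_{\{|f|\le\lambda\}}$ and $h:=f\,\mathbf 1_{\{|f|>\lambda\}}$, so that $\|g\|_{L^\infty}\le\lambda$, $|h|>\lambda$ on its support, and $\|g\|_{L^{(q/\theta)'}},\|h\|_{L^{(q/\theta)'}}\le 1$. Lemma~\ref{Lem:STinfty} applied to $g$ gives $\|\hat g\|_{L^{p'}(S,d\sigma)}\ll \mathcal{R}^{*}(p\to q)\,\lambda^{(1-\theta)/(q-\theta)}$, while Lemma~\ref{Lem:STsupp} applied to $h$, together with $\|K\|_{L^\infty}\le|\mathbb{F}|^{-\tilde d/2}$ (immediate from the definition of Fourier dimension, since $K(0)=0$), gives $\|\hat h\|_{L^{p'}(S,d\sigma)}\ll 1+|\mathbb{F}|^{-\tilde d/4}\lambda^{-\theta/(q-\theta)}$. (The two lemmas are stated with $\|f\|_{L^{(q/\theta)'}}$ exactly equal to $1$; since $g$ and $h$ only satisfy $\le 1$, one applies the lemmas after dividing by the relevant norm and multiplying back, using that the exponents $(1-\theta)/(q-\theta)$ and $\theta/(q-\theta)$ lie in $[0,1]$ so that the rescaling is lossless — this is the only slightly fussy point.) Adding the two bounds by the triangle inequality,
\[\|\hat f\|_{L^{p'}(S,d\sigma)}\ll 1+\mathcal{R}^{*}(p\to q)\,\lambda^{(1-\theta)/(q-\theta)}+|\mathbb{F}|^{-\tilde d/4}\,\lambda^{-\theta/(q-\theta)}.\]

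Finally I would optimize in $\lambda$. The last two terms balance exactly when $\lambda^{1/(q-\theta)}=|\mathbb{F}|^{-\tilde d/4}\mathcal{R}^{*}(p\to q)^{-1}$, that is, for $\lambda=|\mathbb{F}|^{-\tilde d(q-\theta)/4}\mathcal{R}^{*}(p\to q)^{-(q-\theta)}$ (this $\lambda$ is $\le 1$ since $\mathcal{R}^{*}(p\to q)\ge 1$ and $\tilde d>0$, so the decomposition is non-degenerate); for this choice their common value is $\mathcal{R}^{*}(p\to q)^{\theta}|\mathbb{F}|^{-\tilde d(1-\theta)/4}$. Taking the supremum over all admissible $f$ and invoking duality once more yields $\mathcal{R}^{*}(p\to q/\theta)\ll 1+\mathcal{R}^{*}(p\to q)^{\theta}|\mathbb{F}|^{-\tilde d(1-\theta)/4}$, as claimed. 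I do not expect a genuine obstacle here: this is the classical Stein--Tomas interpolation scheme transplanted to the finite-field setting, with all the real analytic content already isolated in Lemmas~\ref{Lem:STinfty} and~\ref{Lem:STsupp}; the bookkeeping around the normalization of the two pieces is the only place requiring a little care.
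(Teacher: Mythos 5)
Your proof is correct and follows exactly the route the paper has in mind: split $f$ into the parts where $|f|\le\lambda$ and $|f|>\lambda$, feed these into Lemmas~\ref{Lem:STinfty} and~\ref{Lem:STsupp} respectively, and balance with $\lambda=|\mathbb{F}|^{-\tilde d(q-\theta)/4}\mathcal{R}^{*}(p\to q)^{-(q-\theta)}$, which is precisely the $\lambda$ the paper plugs in. Your remark on the normalization is a reasonable bit of care, though in fact both lemmas' proofs only use $\|f\|_{L^{(q/\theta)'}}\le 1$ (monotonicity of the relevant sums), so the rescaling step can be bypassed entirely.
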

We also find it useful to use the following consequence of Lemma \ref{Lem:STsupp}:
\begin{corollary}\label{cor:stdecay}Let $1/2 \leq |f|\leq 1$ on its support $E \subseteq \mathbb{F}^3$ satisfying   $|E|=|\mathbb{F}|^\gamma$.  Then
\begin{equation}\label{eq:decay}
 ||\widehat{f}||_{L^{2}(\mathcal{P},d\sigma)} \ll  ||1_{E} ||_{L^{2}(\mathbb{F}^n, dx)} +||1_{E} ||_{L^{\frac{2\gamma}{2\gamma-1}}(\mathbb{F}^n, dx)}.
 \end{equation}
\end{corollary}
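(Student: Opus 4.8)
\emph{Proof sketch.} The plan is to obtain the corollary by reading off the $p = q = 2$, $\theta = 1$ case of Lemma~\ref{Lem:STsupp} (so that both $(q/\theta)'$ and $p'$ equal $2$), combined with the Fourier-dimension bound for $\mathcal{P}$. First I would normalize: set $g = f/\|f\|_{L^2(\mathbb{F}^3,dx)}$, so that $\|g\|_{L^2(\mathbb{F}^3,dx)} = 1$. Since $f$ is supported on $E$ with $1/2 \le |f| \le 1$ and $|E| = |\mathbb{F}|^\gamma$, we have $\tfrac14 |E| \le \|f\|_{L^2(\mathbb{F}^3,dx)}^2 \le |E|$, hence $\|f\|_{L^2(\mathbb{F}^3,dx)} \asymp |E|^{1/2}$; in particular $|g| \ge \lambda$ on its support $E$ for $\lambda := \tfrac12 \|f\|_{L^2(\mathbb{F}^3,dx)}^{-1} \asymp |E|^{-1/2}$.

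Next I would apply Lemma~\ref{Lem:STsupp} to $g$ with $S = \mathcal{P}$, $p = q = 2$, $\theta = 1$. The Proposition on Fourier dimension gives $|(d\sigma)^\vee(x)| \ll |\mathbb{F}|^{-1}$ for $x \ne 0$, and since $K(x) = (d\sigma)^\vee(x) - \delta_0(x)$ vanishes at $x = 0$, this means $\|K\|_{L^\infty(\mathbb{F}^3,dx)} \ll |\mathbb{F}|^{-1}$. With $q = 2$, $\theta = 1$ the exponent $-\theta/(q-\theta)$ equals $-1$, so the lemma gives
\[ \|\widehat{g}\|_{L^2(\mathcal{P},d\sigma)} \ll 1 + \|K\|_{L^\infty(\mathbb{F}^3,dx)}^{1/2}\lambda^{-1} \ll 1 + |\mathbb{F}|^{-1/2}|E|^{1/2}. \]
Multiplying through by $\|f\|_{L^2(\mathbb{F}^3,dx)} \asymp |E|^{1/2}$ and using $\widehat{f} = \|f\|_{L^2(\mathbb{F}^3,dx)}\,\widehat{g}$ yields
\[ \|\widehat{f}\|_{L^2(\mathcal{P},d\sigma)} \ll |E|^{1/2} + |\mathbb{F}|^{-1/2}|E|. \]

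Finally I would reinterpret the two terms using $\|1_E\|_{L^r(\mathbb{F}^3,dx)} = |E|^{1/r}$. The first term is $|E|^{1/2} = \|1_E\|_{L^2(\mathbb{F}^3,dx)}$. For the second, since $|E| = |\mathbb{F}|^\gamma$ we get $|\mathbb{F}|^{-1/2}|E| = |\mathbb{F}|^{\gamma-1/2} = |E|^{(2\gamma-1)/(2\gamma)} = \|1_E\|_{L^{2\gamma/(2\gamma-1)}(\mathbb{F}^3,dx)}$, which is exactly the second term in~\eqref{eq:decay}. There is no real obstacle here; the only things to be a little careful about are that $\theta = 1$ is an admissible endpoint in Lemma~\ref{Lem:STsupp}, and that each $\asymp$ above costs only an absolute constant, which holds precisely because $1/2 \le |f| \le 1$. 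In short, this is a bookkeeping corollary repackaging the $p = q = 2$ instance of Lemma~\ref{Lem:STsupp}.
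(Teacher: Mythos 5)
Your proof is correct and follows exactly the route the paper intends: the corollary is stated as a direct consequence of Lemma~\ref{Lem:STsupp}, and specializing that lemma to $p=q=2$, $\theta=1$ with the normalization $\lambda \asymp |E|^{-1/2}$ and the Fourier-dimension bound $\|K\|_{L^\infty} \ll |\mathbb{F}|^{-1}$ gives precisely the two terms $|E|^{1/2}$ and $|\mathbb{F}|^{-1/2}|E| = |E|^{(2\gamma-1)/(2\gamma)}$, which are the stated $L^2$ and $L^{2\gamma/(2\gamma-1)}$ norms of $1_E$.
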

In addition, we will use the following consequence of Lemma \ref{lem:mtST}:
\begin{lemma}\label{lem:eprem}($\epsilon$-removal lemma) Let $S$ have Fourier dimension $\tilde{d}>0$ and assume that $\mathcal{R}^{*}(p\rightarrow q) \ll_{\epsilon} |\mathbb{F}|^{\epsilon}$, then $\mathcal{R}^{*}(p \rightarrow q+\delta) \ll_{\delta} 1$.
\end{lemma}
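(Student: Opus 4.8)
The plan is to deduce this directly from the Stein--Tomas interpolation inequality of Lemma \ref{lem:mtST}, using that a genuine power saving from the Fourier dimension $\tilde d>0$ can absorb the arbitrarily small power loss allowed in the hypothesis.

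First I would fix $\delta>0$ and set $\theta = q/(q+\delta)$, so that $0<\theta<1$ and $q/\theta = q+\delta$. Applying Lemma \ref{lem:mtST} with this value of $\theta$ (recall $p,q\ge 2$) gives
\[\mathcal{R}^{*}(p \rightarrow q+\delta) \ll 1 + \mathcal{R}^{*}(p \rightarrow q)^{\theta}\, |\mathbb{F}|^{-\tilde{d}(1-\theta)/4}.\]
Next I would feed in the hypothesis: for each $\epsilon>0$ there is a constant $C_\epsilon$ with $\mathcal{R}^{*}(p \rightarrow q) \le C_\epsilon |\mathbb{F}|^{\epsilon}$, and since $\theta\le 1$ we get $\mathcal{R}^{*}(p \rightarrow q)^{\theta} \le \max(C_\epsilon,1)\, |\mathbb{F}|^{\theta\epsilon}$. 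Substituting,
\[\mathcal{R}^{*}(p \rightarrow q+\delta) \ll 1 + \max(C_\epsilon,1)\, |\mathbb{F}|^{\theta\epsilon - \tilde{d}(1-\theta)/4}.\]
Finally I would choose $\epsilon = \epsilon(\delta,\tilde d)>0$ small enough that $\theta\epsilon \le \tilde d(1-\theta)/4$ (any $\epsilon < \tilde d(1-\theta)/(4\theta)$ works); then the exponent of $|\mathbb{F}|$ is $\le 0$, so that factor is $\le 1$, and we obtain $\mathcal{R}^{*}(p \rightarrow q+\delta) \ll 1 + \max(C_\epsilon,1)$, a bound independent of the field size. Since $\theta$ is determined by $\delta$ (and $q$), and $\epsilon$ is then determined by $\delta$ and $\tilde d$, this is precisely $\mathcal{R}^{*}(p \rightarrow q+\delta) \ll_\delta 1$.

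There is no real obstacle here; the only point requiring a moment's care is bookkeeping of constants: one must check that the implied constant $C_\epsilon$ from the hypothesis, which may blow up as $\epsilon\to 0$, enters only through the fixed choice $\epsilon=\epsilon(\delta,\tilde d)$, and that passing to the power $\theta\le 1$ does not worsen it. Both are immediate from the display above.
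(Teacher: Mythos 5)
Your proof is correct and is essentially the intended one: the paper itself introduces Lemma \ref{lem:eprem} explicitly as a consequence of Lemma \ref{lem:mtST} without spelling out the details, and your argument (choose $\theta=q/(q+\delta)$, feed in the $|\mathbb{F}|^{\epsilon}$ hypothesis, then pick $\epsilon$ small enough that the Fourier-decay factor dominates) is exactly the expected derivation. The bookkeeping with $C_\epsilon^{\theta}\le\max(C_\epsilon,1)$ is handled correctly.
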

The strategy of \cite{MT} proceeds by first proving the following local restriction estimate $\mathcal{R}^{*}(2 \rightarrow 16/5) \ll_{\epsilon} |\mathbb{F}|^{1/16 + \epsilon}$. In turn, a key ingredient in the proof of this local restriction estimate, is the estimate $\mathcal{R}^{*}(8/5 \rightarrow 4)\ll_{\epsilon}|\mathbb{F}|^{\epsilon}$ which is proved by expanding out an $L^4$ norm (in the dual extension formulation) and using some combinatorics (we will revisit this argument shortly). This has been improved to $\mathcal{R}^{*}(8/5 \rightarrow 4)\ll 1$ (see \cite{lewko}, although this was first proved by Bennett, Carbert, Garrigos and Wright in unpublished work) which when combined with the argument of \cite{MT} gives the slightly stronger local estimate:
\begin{proposition}\label{prop:localres}Let $S$ be the $3$-d paraboloid. Then:
\[\mathcal{R}^{*}(2 \rightarrow 16/5) \ll  |\mathbb{F}|^{1/16}.\]
\end{proposition}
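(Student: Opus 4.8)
The plan is to follow the Mockenhaupt--Tao strategy of combining a good $L^4$-based estimate (in the extension formulation this corresponds to $\mathcal R^*(8/5 \to 4)\ll 1$, i.e.\ $\mathcal R^*(2\to 4)$ bilinearly; in the restriction formulation it is the statement that $\|\hat f\|_{L^{8/3}(\mathcal P, d\sigma)} \ll \|f\|_{L^{8/5}(\mathbb F^3,dx)}$) with the Stein--Tomas decomposition from Lemma~\ref{Lem:STsupp} / Corollary~\ref{cor:stdecay}. First I would dualize: it suffices to prove $\|\hat f\|_{L^{16/11}(\mathcal P,d\sigma)} \ll |\mathbb F|^{1/16}\|f\|_{L^2(\mathbb F^3,dx)}$ for arbitrary $f$ on $\mathbb F^3$. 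By the usual pigeonholing/dyadic-decomposition argument it is enough to treat $f = 1_E$ for a single set $E\subseteq \mathbb F^3$ with $|E| = |\mathbb F|^\gamma$, and more generally $1/2\le |f|\le 1$ on $E$, so that all the estimates become estimates in terms of $\gamma \in [0,3]$.

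The core of the argument is an interpolation between two regimes according to the size $\gamma$ of the support. In the ``large support'' regime one invokes the improved $L^4$ extension estimate $\mathcal R^*(8/5\to 4)\ll 1$ of \cite{lewko} (equivalently the bilinear $L^4$ bound), fed through Lemma~\ref{lem:charinfST} with $(p,q)=(8/5\text{-dual}, 4)$ and $\alpha=0$, to control $\|\hat f\|_{L^{p'}(\mathcal P,d\sigma)}$ by a suitable $L^r(\mathbb F^3)$-norm of $1_E$. In the ``small support'' regime one uses the Stein--Tomas/Bochner--Riesz decay estimate in the form of Corollary~\ref{cor:stdecay}: since the Fourier dimension of the paraboloid is $2$ (the Proposition), $\|K\|_{L^\infty}\ll |\mathbb F|^{-1}$, and Lemma~\ref{Lem:STsupp} gives $\|\hat f\|_{L^{p'}(\mathcal P,d\sigma)}\ll 1 + |\mathbb F|^{-1/2}\lambda^{-\theta/(q-\theta)}$; choosing the parameters optimally and tracking the exponent of $|\mathbb F|$ as a function of $\gamma$ yields a bound that is strong for small $\gamma$. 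Then I would choose the threshold between the two regimes and the exponent $q$ so that the two resulting bounds match; the arithmetic is arranged so that the crossover occurs precisely at $q=16/5$ and the loss is exactly $|\mathbb F|^{1/16}$, exactly as in \cite{MT}, the only change being that the input $\mathcal R^*(8/5\to 4)\ll 1$ is now $\epsilon$-free (it was $|\mathbb F|^\epsilon$ in \cite{MT}), which is what upgrades the conclusion from $\mathcal R^*(2\to 16/5)\ll_\epsilon |\mathbb F|^{1/16+\epsilon}$ to $\mathcal R^*(2\to 16/5)\ll |\mathbb F|^{1/16}$.

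I expect the main obstacle to be purely bookkeeping: keeping track of the various Hölder exponents $p,q,\theta,\gamma,r$ through the two lemmas and verifying that the two regime-bounds genuinely interpolate to give $16/5$ on the nose with a clean $|\mathbb F|^{1/16}$ (rather than some nearby but different exponent). There is no new combinatorial or harmonic-analytic content beyond what is already quoted; in particular one must be careful that Lemma~\ref{lem:charinfST} and Corollary~\ref{cor:stdecay} are being applied to the normalized indicator $1_E$ (so the factors of $|E|$ and $|\mathbb F|$ enter with the right signs), and that the dualization between the extension norm $\mathcal R^*(p\to q)$ and the restriction norm $\|\hat f\|_{L^{p'}(\mathcal P,d\sigma)}$ is done consistently. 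Once the parameters are pinned down, the statement follows by combining the two cases.
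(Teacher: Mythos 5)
Your proposal correctly identifies the two key ingredients (the $\epsilon$-free $L^4$ bound $\mathcal{R}^*(8/5\to 4)\ll 1$, and the Stein--Tomas/Bochner--Riesz decay estimate) and the overall shape of the argument (follow Mockenhaupt--Tao, split according to the size $|E|=|\mathbb{F}|^\gamma$ of the support). However, the step you propose for the ``large support'' regime does not close the argument, and there are two errors worth flagging.

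First, the dual formulation is reversed: $\mathcal{R}^*(2\to 16/5)\ll |\mathbb{F}|^{1/16}$ is equivalent to $\|\hat f\|_{L^{2}(\mathcal{P},d\sigma)}\ll |\mathbb{F}|^{1/16}\|f\|_{L^{16/11}(\mathbb{F}^3,dx)}$, i.e.\ the $L^2$-norm is on the restriction side and the $L^{16/11}$-norm on the physical side, not the other way around. Second, and more seriously, feeding $\mathcal{R}^*(8/5\to 4)\ll 1$ through Lemma~\ref{lem:charinfST} with $\alpha=0$ gives $\|\hat f\|_{L^{8/3}(\mathcal{P},d\sigma)}\ll \|f\|_{L^{4/3}}$. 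Since $d\sigma$ is a probability measure this bounds $\|\hat f\|_{L^2(\mathcal{P},d\sigma)}\ll |E|^{3/4}$, which is $\le |\mathbb{F}|^{1/16}|E|^{11/16}$ only when $\gamma\le 1$. Meanwhile Corollary~\ref{cor:stdecay} handles $\gamma\le 9/5$ and the trivial Plancherel bound $\|\hat f\|_{L^2(\mathcal{P},d\sigma)}\le |\mathbb{F}|^{1/2}\|f\|_{L^2}$ handles $\gamma\ge 7/3$. Your two regime-bounds therefore leave an uncovered gap $9/5<\gamma<7/3$, and no choice of thresholds makes Lemma~\ref{lem:charinfST} reach into it.

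The missing idea is the \emph{slicing plus pseudoconformal transformation}: one decomposes $f=\sum_z f_z$ into horizontal slices $\{x_3=z\}$, expresses $f_z*K$ via the explicit Gauss-sum formula for the Bochner--Riesz kernel, and uses the pseudoconformal change of variables to identify $\|f_z*K\|_{L^4}$ with $|\mathbb{F}|\cdot\|(f_z\,d\sigma)^\vee\|_{L^4}$, to which the $L^4$ extension estimate is then applied slice by slice. Summing in $z$ and pairing with $f$ via H\"older gives the bound that covers the large-$\gamma$ range. This is exactly what the paper's Lemma~\ref{lem:mt1} does (take $\alpha=3/2$, the trivial incidence exponent, to recover the Mockenhaupt--Tao version). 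Without the slicing step the argument genuinely fails, so this is not just bookkeeping.
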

From this estimate, one may then recover $\mathcal{R}^{*}(2 \rightarrow \frac{18}{5})$ by applying Lemma \ref{lem:mtST} with $p=2$, $q=16/5$, $\tilde{d}=2$ and $\theta=8/9$. One can view our current approach to improving the $\mathcal{R}^{*}(2 \rightarrow 18/5)$ result as isolating the bottleneck in the argument just described. This turns out to occur when the function $f$ the restriction operator is being applied to is essentially the characteristic function of a set of dimension $9/5$ (and has some additional regularity properties). We are then able to improve the $L^4$ analysis in the case of functions of this form by inserting a nontrivial incidence estimate into the argument.
\section{The $L^4$ estimate revisited}
As previously mentioned, the following extension estimate was proven in \cite{lewko}:
\[||(fd\sigma)^{\vee}||_{L^4(\mathbb{F},dx)} \leq ||f||_{L^{8/5}(\mathcal{P},d\sigma)} .\]
where $\mathcal{P}$ is the $3$-dimensional paraboloid and $d \sigma$ is the normalized surface measure. This is sharp in the sense that the $L^{8/5}$ norm can't be replaced with a smaller $L^p$ norm. However, we will show that if the the support of $f$ is contained in a set that satisfies a nontrival incidence estimate then this estimate can be improved somewhat. More precisely:
\begin{proposition}\label{prop:L4}Let $E \subset \mathbb{F} \times \mathbb{F}$ such that for all sets of points $A \subset \mathbb{F} \times \mathbb{F}$ and lines $L$ of comparable size (say, $\frac{1}{2} |E| \leq |L|, |A| \leq 2 |E|$) one has $|I(A, L)| \ll |A|^{\alpha}$, then
\[||(f d\sigma)^\vee ||_{L^4( \mathbb{F}^3, dx)} \ll |E|^{(1+\alpha)/4}|F|^{-5/4}\]
for any $|f| \leq 1_{E}$.
\end{proposition}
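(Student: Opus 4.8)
The plan is to expand the $L^4$ norm of the extension operator, reduce it to a weighted count of additive quadruples on the paraboloid, and then recognize those quadruples as incidences between an explicit point set and an explicit line set built from $E$. Concretely, write $g = fd\sigma$ with $|f|\le 1_E$, so $\|(fd\sigma)^\vee\|_{L^4}^4 = \sum_{x\in\mathbb{F}^3} |(fd\sigma)^\vee(x)|^4$. Expanding the fourth power and using orthogonality of the characters $e(x\cdot\xi)$ over $x\in\mathbb{F}^3$, this equals $|\mathbb{F}|^3 |S|^{-4}$ times the number of solutions (weighted by $|f|$) to $\xi_1 + \xi_2 = \xi_3 + \xi_4$ with all $\xi_i \in \mathcal{P}$ with first two coordinates in $E$. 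Writing $\xi_i = (\omega_i, \omega_i\cdot\omega_i)$ with $\omega_i \in E \subset \mathbb{F}^2$, the condition becomes the pair of equations $\omega_1 + \omega_2 = \omega_3 + \omega_4$ (in $\mathbb{F}^2$) and $|\omega_1|^2 + |\omega_2|^2 = |\omega_3|^2 + |\omega_4|^2$ (in $\mathbb{F}$, with $|\omega|^2 = \omega\cdot\omega$). Since $|f|\le 1_E$, it suffices to bound the number $Q$ of such quadruples in $E^4$; the target bound is $Q \ll |E|^{1+\alpha}$, which after tracking the normalization factors $|\mathbb{F}|^3 |\mathcal{P}|^{-4} = |\mathbb{F}|^3 |\mathbb{F}|^{-8} = |\mathbb{F}|^{-5}$ yields exactly $\|(fd\sigma)^\vee\|_{L^4}^4 \ll |E|^{1+\alpha}|\mathbb{F}|^{-5}$.

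The next step is the change of variables that is standard for the paraboloid $L^4$ calculation: set $u = \omega_1 - \omega_3$ and note $\omega_2 - \omega_4 = -u$ as well, while $\omega_1 - \omega_4 =: v$. Then parametrize a quadruple by, say, $\omega_1$, $\omega_3 = \omega_1 - u$, $\omega_4 = \omega_1 - v$, $\omega_2 = \omega_1 - u - v + (\text{correction})$ — more cleanly, fix the difference $h := \omega_1 - \omega_3 = \omega_4 - \omega_2 \in \mathbb{F}^2$, so the quadratic equation $|\omega_1|^2 - |\omega_3|^2 = |\omega_4|^2 - |\omega_2|^2$ becomes $h\cdot(\omega_1 + \omega_3) = h\cdot(\omega_2 + \omega_4)$, i.e. $h\cdot(\omega_1 + \omega_3 - \omega_2 - \omega_4) = 0$; combined with $\omega_1 + \omega_2 = \omega_3 + \omega_4$ this forces $h \cdot (\omega_1 - \omega_4) = 0$ after simplification. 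Thus for each nonzero difference $h$, the pair $(\omega_1,\omega_4)$ ranging over $E\times E$ must satisfy the linear constraint that $\omega_1 - \omega_4$ is orthogonal to $h$. The count of quadruples with difference $h$ is then the number of pairs $(a,b)\in E\times E$ with $a - b$ lying on the line through the origin perpendicular to $h$; summing over the $\sim|E|$ relevant directions $h$ (those that actually occur as differences within $E$) turns $Q$ into a sum over lines $\ell$ through the origin of $r_{E-E}(\ell)$-type quantities, which is precisely an incidence count between the $\sim|E|$ points of a suitable translated/dilated copy of the difference set and the $\sim|E|$ lines in question.

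The heart of the argument — and the step I expect to be the main obstacle — is making this reduction to incidences both honest and efficient: one must exhibit a set of points $A$ and a set of lines $L$ in $\mathbb{F}^2$ with $|A|, |L|$ both comparable to $|E|$ (within the factor-of-two window in the hypothesis) such that $Q$ is dominated by $|I(A,L)|$ (perhaps up to a harmless diagonal term coming from $h = 0$, which contributes $|E|^2 \le |E|^{1+\alpha}$ since $\alpha \ge 1$). The natural choice is to take $L$ to be the family of lines $\{x : x\cdot h = c\}$ as $h$ ranges over popular differences of $E$ and $c$ over the relevant values, and $A$ a rescaled copy of $E$ itself, so that each incidence records one admissible configuration; one has to check that the multiplicity with which a genuine quadruple is counted is bounded and that the sizes of $A$ and $L$ do not blow up (a dyadic pigeonholing on the popularity of differences, discarding the unpopular ones whose total contribution is $\ll |E|^{1+\alpha}$ trivially, handles the size control). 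Once $|I(A,L)| \ll |A|^\alpha \ll |E|^\alpha$ is applied on each popular scale and the $\ll |E|$ scales are summed (or a single dyadic level is extracted), one obtains $Q \ll |E|^{1+\alpha}$, and inserting the normalization completes the proof. The only genuinely delicate bookkeeping is ensuring the incidence hypothesis is being applied to point/line sets of the right cardinality, since the hypothesis is stated only for sets of size comparable to $|E|$ — this is why the dyadic restriction to a popular difference level, rather than summing trivially, is the right move.
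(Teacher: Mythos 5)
Your opening moves (expanding the $L^4$ norm, reducing to a count $Q$ of additive quadruples on the paraboloid, noting $|f|\le 1_E$ lets you pass to $1_E$, tracking the normalization $|\mathbb{F}|^3|\mathcal{P}|^{-4}=|\mathbb{F}|^{-5}$) are all correct and match the paper. The algebraic observation that, with $h=\omega_1-\omega_3=\omega_4-\omega_2$, the quadratic constraint collapses to $h\cdot(\omega_1-\omega_4)=0$ is also right. But the reduction from $Q$ to a single point--line incidence count of size $\sim|E|$ is the whole proposition, and that step is left as a sketch that does not actually go through as written.

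Concretely: parametrizing by the difference $h$ leaves you summing over \emph{all} differences of $E$ (up to $|E|^2$ of them), or over \emph{all} directions in $\mathbb{F}^2$ (about $|\mathbb{F}|+1$ of them) — neither of which is $\sim|E|$ in general. Your assertion that there are ``$\sim|E|$ relevant directions $h$'' is unsupported, and without it you cannot produce a line set $L$ with $|L|\sim|E|$ to which the hypothesis applies. Nor is it explained how the extra constraints $\omega_1-h,\omega_4-h\in E$ feed into the incidence geometry, or how the multiplicity with which one quadruple is recorded as an incidence is controlled. The paper circumvents all of this with a different, cleaner reduction: first relax one of the four variables to $\mathcal{P}$ and extract a factor $|E|$ by taking $\max_{b\in\mathcal{P}}$ over the fourth variable; then apply the Galilean transformation $g_{-\nu}$ that carries the paraboloid to itself and sends $b$ to the vertex, reducing the count to $\#\{(a',d')\in E'\times E' : a'-d'\in\mathcal{P}\}$ for a single translate $E'$ of $E$. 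In coordinates this becomes the incidence count between the projection $X_{E'}\subset\mathbb{F}^2$ of $E'$ and the lines $\ell(y)=\{x:y\cdot x=y\cdot y\}$ for $y\in X_{E'}$ — a single point/line configuration with both sets of size $\le|E|$, exactly matching the hypothesis. The Galilean normalization is the missing idea in your sketch.

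One further gap: your proposal never invokes the hypothesis that $-1$ is not a square in $\mathbb{F}$, yet the proposition (read in context) needs it. In the paper it is used precisely to show that the map $y\mapsto\ell(y)$ is injective (if $\ell(y)=\ell(y')$ then $(y-y')\cdot(y-y')=0$, forcing $y=y'$), which guarantees the line set really has cardinality $|X_{E'}|$. Any rigorous version of your argument would need an analogous injectivity/non-degeneracy statement, and as written there is nowhere for that hypothesis to enter.
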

Note that using the trivial incidence inequality $|I(A,L)| \ll |A|^{3/2} $ (for $|A|\sim|L|$) allows one to recover $||(1_{E} d\sigma)^\vee ||_{L^4(\mathbb{F}^3, dx)} \ll  || 1_{E}||_{L^{8/5}(S,d\sigma)} $.  The proof closely follows the exposition of the $L^4$ estimate in \cite{lewko}, with adjustments to involve the incidence hypothesis.
\begin{proof}Expanding the $L^4$ norm (as we will do below) one easily sees that $||(f d\sigma)^\vee ||_{L^4( \mathbb{F}^3, dx)}  \leq  ||(1_{E} d\sigma)^\vee ||_{L^4( \mathbb{F}^3, dx)}  $. Thus we can replace $f$ with $1_{E}$ throughout the following.  Expanding the $L^4$ norm and using the definition of $(1_E d\sigma)^\vee$, we have:
\[\left|\left|(1_E d\sigma)^\vee (1_E d\sigma)^\vee \right|\right|_{L^2(\mathbb{F}^3,dx)}^2 =  \sum_{x \in \mathbb{F}^3} \left|(1_E d\sigma)^\vee (x) (1_E d\sigma)^\vee (x)\right|^2.\]
\[ = \sum_{x \in \mathbb{F}^3} \left| \frac{1}{|\mathcal{P}|} \sum_{\xi_1 \in \mathcal{P}} \chi_A(\xi_1) e(x\cdot \xi_1) \cdot \frac{1}{|\mathcal{P}|} \sum_{\xi_2 \in \mathcal{P}} \chi_B(\xi_2) e(x\cdot \xi_2)\right|^2.\]
We can rewrite this as:
\[\frac{1}{|\mathcal{P}|^4} \sum_{x\in  \mathbb{F}^3} \left| \sum_{\xi_1 \in \mathcal{P}} 1_E(\xi_1) e(x \cdot \xi_1) \cdot \sum_{\xi_2 \in \mathcal{P}} 1_E (\xi_2) e(x\cdot \xi_2)\right|^2\]
\[ = \frac{1}{|\mathcal{P}|^4} \sum_{x \in \mathbb{F}^3} \sum_{a,b,c,d \in \mathcal{P}} 1_E(a) 1_E(b) 1_E(c) 1_E(d) e(x\cdot a) e(x \cdot b) e(-x\cdot c) e(-x \cdot d).\]
\[ = \frac{1}{|\mathcal{P}|^4} \sum_{\substack{a,c \in E \\ b,d \in E}} \sum_{x \in  \mathbb{F}^3}e(x \cdot (a+b-c-d))  = \frac{| \mathbb{F}|^3}{|\mathcal{P}|^4} \sum_{\substack{a+b = c+d \\ a,c \in E \\ b,d \in E}} 1.\]
Using the fact that $E \subseteq \mathcal{P}$, we observe:
\[\sum_{\substack{a+b = c+d \\ a,c \in E \\ b,d \in E }} 1 = \sum_{\substack{a-d = c-b \\ a,c \in E \\ b,d \in E }} 1 \leq \sum_{\substack{a-d = c-b \\ a \in E \\ b,d \in E \\ c \in \mathcal{P}}} 1.\]
This can be bounded above by:
\[\leq |E| \cdot \max_{b \in E} \sum_{\substack{a-d = c-b \\ a \in E \\ d \in E \\ c \in \mathcal{P}}} 1 \leq |E| \cdot \max_{ b\in \mathcal{P}} \sum_{\substack{a-d = c-b \\ a \in E \\ d\in E \\ c \in \mathcal{P}}} 1 .\]
We now consider the quantity inside the maximum for an arbitrary, fixed $b \in \mathcal{P}$. To bound this, we will use the Galilean transformation $g_{\delta}: \mathcal{P} \rightarrow \mathcal{P}$, which is defined for each $\delta \in  \mathbb{F}_*^{2}$ by:
\[g_{\delta}(\gamma, \tau) := (\gamma+ \delta, \tau + 2 \gamma \cdot \delta + \delta \cdot \delta),\]
where $(\gamma, \tau) \in  \mathbb{F}_*^{2} \times  \mathbb{F}_* =  \mathbb{F}_*^3$. We note that for each $\delta \in  \mathbb{F}_*^{2}$, this is a bijective map from $\mathcal{P}$ to itself. It now easily follows that (see Claim 5 in \cite{lewko} for a proof):
\begin{claim} We write $b \in \mathcal{P}$ as $b = (\nu, \nu \cdot \nu)$, for $\nu \in F_*^{2}$. We also define $E':= g_{-\nu} (E)$. We then have:
\[\sum_{\substack{a-d = c-b\\ a \in E \\ d \in E \\ c\in \mathcal{P}}} 1 = \sum_{\substack{a'-d' \in \mathcal{P} \\ a' \in E' \\ d' \in E'}} 1.\]
\end{claim}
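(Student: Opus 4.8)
The plan is to produce a bijection between the two sets of tuples being counted. On the left, the constraint $a-d=c-b$ does not leave $c$ free: it forces $c=a-d+b$, and the only remaining requirement is $c\in\mathcal{P}$. Hence the left-hand sum equals $\#\{(a,d)\in E\times E:\ a-d+b\in\mathcal{P}\}$, and it suffices to match this with $\#\{(a',d')\in E'\times E':\ a'-d'\in\mathcal{P}\}$.

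I would next isolate the two properties of the Galilean maps that do the work, both read off directly from $g_{\delta}(\gamma,\tau)=(\gamma+\delta,\ \tau+2\gamma\cdot\delta+\delta\cdot\delta)$, a formula that makes sense for \emph{all} $(\gamma,\tau)\in\mathbb{F}^3$. (i) Writing $g_{\delta}$ as the linear map $M_{\delta}(\gamma,\tau):=(\gamma,\ \tau+2\gamma\cdot\delta)$ followed by the translation by $(\delta,\delta\cdot\delta)$, one sees that $g_{\delta}$ is a bijection of $\mathbb{F}^3$ with inverse $g_{-\delta}$, and (as already noted above) it restricts to a bijection of $\mathcal{P}$ onto $\mathcal{P}$. (ii) The translation cancels in a difference, so $g_{\delta}(p)-g_{\delta}(q)=M_{\delta}(p-q)$ for all $p,q\in\mathbb{F}^3$; in particular $g_{\delta}(p)-g_{\delta}(q)$ depends only on $p-q$. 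A direct computation also gives $g_{-\nu}(b)=g_{-\nu}(\nu,\nu\cdot\nu)=(0,0)\in\mathcal{P}$.

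Now put $\phi:=g_{-\nu}$ and, for $(a,d)\in E\times E$, set $a'=\phi(a)$ and $d'=\phi(d)$. Since $\phi$ is injective and $E'=\phi(E)$ by definition, the assignment $(a,d)\mapsto(a',d')$ is a bijection of $E\times E$ onto $E'\times E'$. By (ii) applied with $p=a-d+b$ and $q=b$,
\[
a'-d'=M_{-\nu}(a-d)=M_{-\nu}\big((a-d+b)-b\big)=\phi(a-d+b)-\phi(b)=\phi(a-d+b).
\]
Because $\phi$ is a bijection of $\mathbb{F}^3$ that carries $\mathcal{P}$ onto $\mathcal{P}$, we conclude $a-d+b\in\mathcal{P}\iff\phi(a-d+b)\in\mathcal{P}\iff a'-d'\in\mathcal{P}$. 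Thus the bijection $(a,d)\mapsto(a',d')$ matches the two membership conditions term by term, which is the asserted identity.

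None of this is hard; the one place to be careful is to treat $g_{-\nu}$ as a map of the whole space $\mathbb{F}^3$ rather than only of $\mathcal{P}$, since the intermediate point $a-d+b$ generally lies off the paraboloid, and to keep the affine map $g_{-\nu}$ distinct from its linear part $M_{-\nu}$ (differences are governed by $M_{-\nu}$, while membership in $\mathcal{P}$ is preserved by $\phi$). That compatibility is exactly what converts the ``difference shifted by $b$ lands in $\mathcal{P}$'' condition on the left into the plain ``difference lands in $\mathcal{P}$'' condition on the right.
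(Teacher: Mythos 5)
Your proof is correct and is essentially the standard argument (the paper itself defers to Claim~5 of \cite{lewko} rather than reproving it). The decomposition of $g_{\delta}$ into its linear part $M_{\delta}$ plus a translation, the observation that differences transform by $M_{\delta}$, the identity $g_{-\nu}(b)=(0,0)$, and the fact that $g_{-\nu}$ is a bijection of $\mathbb{F}_*^3$ preserving $\mathcal{P}$ together give exactly the needed term-by-term matching $(a,d)\mapsto(g_{-\nu}(a),g_{-\nu}(d))$. Your explicit warning that $g_{-\nu}$ must be applied to the whole ambient space (since $a-d+b$ generally lies off $\mathcal{P}$) is a genuinely useful clarification that the paper's terse phrasing leaves implicit.
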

Thus it suffices to bound the quantity
\[\sum_{\substack{a'-d' \in \mathcal{P} \\ a' \in E' \\ d' \in E'}} 1.\]
We note that the contribution to this sum from terms where $d' = 0$ is at most $|E'| = |E|$. On the other hand, there can be no contribution from terms where $a' = 0$ and $d' \neq 0$, since having both of $d', -d'$ in $\mathcal{P}$ is impossible for $d' \neq 0$. Hence, we have:
\[\sum_{\substack{a'-d' \in \mathcal{P} \\ a' \in E' \\ d' \in E'}} 1 \leq |E| + \sum_{\substack{a'-d' \in \mathcal{P}\\ a' \in E'-\{0\} \\ d' \in E' - \{0\}}} 1.\]
We now define the set $X_{E'} := \{\gamma \in  \mathbb{F}_*^{2} : (\gamma, \gamma \cdot \gamma) \in E' - \{0\}\}$. Letting $a' = (x, x \cdot x)$ and $d' = (y, y \cdot y)$, we note that $a' - d' \in \mathcal{P}$ is equivalent to $x \cdot y = y\cdot y$.
For each $y \in  \mathbb{F}_*^{2}$, we can define a line in $ \mathbb{F}_*^{2}$ by $\ell(y) := \{x \in  \mathbb{F}_*^{2}: y \cdot x = y \cdot y\}$. We now prove that these lines are distinct, i.e. $y$ and $\ell(y)$ are in bijective correspondence. We suppose that $\ell(y) = \ell(y')$ for $y, y' \in  \mathbb{F}_*^2$. We note that $y \in \ell(y)$ and $y' \in \ell(y')$. Since these lines are the same, we must also have $y \in \ell(y')$ and $y' \in \ell(y)$. By definition of $\ell(y), \ell(y')$, this implies that $y \cdot y = y' \cdot y = y' \cdot y'$. Hence,
$(y-y') \cdot (y- y')  = y \cdot y - 2 y' \cdot y + y' \cdot y' = 0$.
However, since $-1$ is not a square in $\mathbb{F}$, this implies that $y - y'$ must be the zero vector in $F_*^2$. Thus, $y = y'$.
We define $L_{E'}$ to be the collection of lines $L_{E'} := \{\ell(y): y \in X_{E'}\}$. Since these lines are distinct and $a'-d' \in \mathcal{P}$ if and only if the corresponding $x,y$ satisfy $x \in \ell(y)$, we have that:
\[\sum_{\substack{a'-d' \in \mathcal{P}\\ a' \in E'-\{0\} \\ d' \in E' - \{0\}}} 1 = \left|\{(\ell(y), x) \in L_{E'} \times X_{E'}: x \in \ell(y)\}\right|.\]
\[=|I(E', L_{E'})|  \]
Thus
\[ \sum_{\substack{a+b = c+d \\ a,c \in E \\ b,d \in E}} 1 \leq |E| \left(|E| + I(E', L_{E'}) \right) \]
and recalling that $\left|\left|(\chi_E d\sigma)^\vee \right|\right|_{L^4( \mathbb{F}^3,dx)}^4 = \frac{| \mathbb{F}|^3}{|\mathcal{P}|^4} \sum_{\substack{a+b = c+d \\ a,c \in E \\ b,d \in E}} 1$ we see that
\[\left|\left|(\chi_E d\sigma)^\vee \right|\right|_{L^4( \mathbb{F}^3,dx)}^4 \leq 2 \cdot \frac{|E||\mathbb{F}|^{3}}{|\mathcal{P}|^4} \left(|E| + I(E', L_{E'}) \right).\]
Finally, from the incidence hypothesis we have
\[\left|\left|(\chi_E d\sigma)^\vee \right|\right|_{L^4( \mathbb{F}^3,dx)} \ll  \left(\frac{|\mathbb{F}|^3}{|\mathbb{F}|^8} |E|^{1+\alpha} \right)^{1/4}.\]
which completes the proof.
\end{proof}
\section{Reduction to regular functions}
First we introduce the notion of a regular set and a regular function. First we define a regular set, $A \subseteq \mathbb{F}^3$. Let $0 \leq \gamma \leq 3$ such that $|A| = |\mathbb{F}|^{\gamma}$. We define $A_{z} \subseteq \mathbb{F}^2$ to be the restriction of $A$ to the hyperplane $\{(x_1,x_2,x_3): x_3=z\}$. We say $A$ is regular (or, when we wish to be more precise, $(\gamma, s, t)$-regular) if the following holds. We have $\gamma = s+t$ where $t$ is defined to be the number $|\{z \in \mathbb{F} : |A_{z}| > 0 \}| = |\mathbb{F}|^t$ and if $|A_z| \geq 0$ then $|\mathbb{F}|^{s}  \leq |A_{z}| \leq 2 |\mathbb{F}|^{s}$. Furthermore, we will call $g : \mathbb{F}^3 \rightarrow \mathbb{C}$ a $(\gamma, s, t)$ -regular function if $g$ is supported on a $(\gamma, s, t)$-regular set, and $1/2 \leq |g| \leq 1$ on its support.
Recall our goal is to prove an inequality of the form
\[ ||\widehat{f}||_{L^{p'}(S,d\sigma)} \ll ||f||_{L^{q'}(\mathbb{F}^n,dx)}.\]
The first part of the argument is the familiar dyadic pigeonholing to level sets. More precisely, we fix $||f||_{L^{q'}(\mathbb{F}^n,dx)}=1$ and decompose $f = \sum_{ 0 \leq i \leq 10 \log(|F|)} 1_{S_i} f + E$ where $S_i := \{x \in \mathbb{F}^3 : 2^{i-1} < |f(x)| \leq 2^{i} \}$ and $E = f$ if $|f| \ll |\mathbb{F}|^{-10}$, and $0$ otherwise. The contribution from $E$ is easily seen to be neglige by H\"{o}lder's inequality (or Lemma \ref{Lem:STinfty}). Since there are only $O(\log(|F|))$ terms in the sum, if we show that (uniformly) $||\widehat{f 1_{S_i}}||_{L^{2}(S,d\sigma)} \ll \log^{O(1)}(|\mathbb{F}|) ||f 1_{S_i}||_{L^{q'}(\mathbb{F}^n,dx)}$ for all $i$, then it follows that $||\hat{f}||_{L^{p'}(S,d\sigma)} \ll \log^{O(1)}(|\mathbb{F}|) || f||_{L^{q'}(\mathbb{F}^n,dx)}  $ and we may then apply the $\epsilon$ removal lemma (Lemma \ref{lem:eprem}) to conclude that $\mathcal{R}^{*}(2\rightarrow q+\epsilon)$ for any $\epsilon >0$. Moreover, by scaling it suffices to show that $||\widehat{f 1_{S_i}}||_{L^{2}(S,d\sigma)} \ll \log^{O(1)}(|\mathbb{F}|) || 1_{S_i}||_{L^{q'}(\mathbb{F}^n,dx)}$ with $ 1/2 \leq  |f| \leq 1$.
We now claim that it suffices to only consider regular functions. To see this let $g=f1_{S}$ for some $ 1/2 \leq  |f| \leq 1$. For $z \in \mathbb{F}$ we define $g_{z}:= g(x_1,x_2,z)$ to be the restriction of $g$ to the hyperplane $\{(x_1,x_2,x_3): x_3=z\}$.  We let $A_{z} \subseteq \mathbb{F}^2$ denote the support of $g_z$.  Clearly, $0 \leq |A_{z}| \leq |\mathbb{F}|^2$.  For each $0\leq  i \leq 10\log(|\mathbb{F}|)$ we may partition $\mathbb{F}$ (the domain of the $z$ parameter) into sets $J_i$ such that $2^{i} \leq  |A_{z}| < 2^{i+1}$ for all $z \in J_i$. We now define $B_i = \cup_{z \in J_i} A_{z}$.  This gives a decomposition of $g = \sum_{i=1}^{10\log(|\mathbb{F}|)}  g 1_{B_i} $ where each term, $g 1_{B_i}$, has disjoint support and is a regular function. Now if we have
\[ ||\widehat{g 1_{B_i}}||_{L^{p'}(S,d\sigma)} \ll \log^{O(1)}(|\mathbb{F}|)||1_{B_i}||_{L^{q'}(\mathbb{F}^n,dx)}\]
for each $B_{i}$, then clearly
\[ ||\widehat{g 1_{B_i}}||_{L^{p'}(S,d\sigma)}  \ll \log^{O(1)}(|\mathbb{F}|)  \max_{i} ||1_{B_i}||_{L^{q'}(\mathbb{F}^n,dx)} \ll  \log^{O(1)}(|\mathbb{F}|)|| f 1_{S} ||_{L^{q'}(\mathbb{F}^n,dx)}  \]
which, as discussed above, is sufficient for our purposes.  We have thus proved that
\begin{lemma}\label{lem:regular}If the inequality
\[||\widehat{f}||_{L^{p'}(S,d\sigma)} \ll \log^{O(1)}(|\mathbb{F}|) ||f||_{L^{q'}(\mathbb{F}^n,dx)}\]
holds for all regular functions $f$, then
\[||\widehat{f}||_{L^{p'}(S,d\sigma)} \ll_{\epsilon} ||f||_{L^{q' -\epsilon}(\mathbb{F}^n,dx)}  \]
holds for all functions and every $\epsilon >0$.
\end{lemma}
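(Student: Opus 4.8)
The plan is to reduce an arbitrary $f$ to a bounded superposition of regular functions by two successive dyadic pigeonholing steps, apply the hypothesis to each regular piece, sum, and then trade the resulting polylogarithmic loss for the claimed $\epsilon$-loss in the exponent using the $\epsilon$-removal lemma (Lemma~\ref{lem:eprem}).

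First I would normalize $\|f\|_{L^{q'}(\mathbb{F}^n,dx)}=1$ and pass to level sets: write $f=\sum_{0\le i\le 10\log|\mathbb{F}|} f 1_{S_i}+E$ with $S_i=\{2^{i-1}<|f|\le 2^i\}$ and $E$ the part where $|f|\ll|\mathbb{F}|^{-10}$, whose contribution is negligible by H\"{o}lder (or Lemma~\ref{Lem:STinfty}). Since there are only $O(\log|\mathbb{F}|)$ terms, the triangle inequality in $L^{p'}(S,d\sigma)$ reduces matters to bounding each $\|\widehat{f 1_{S_i}}\|_{L^{p'}(S,d\sigma)}$ by $\log^{O(1)}(|\mathbb{F}|)\|f 1_{S_i}\|_{L^{q'}}$, and after rescaling I may assume $1/2\le|f|\le1$ on its support $A$.

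Second, I would split $A$ according to the sizes of its horizontal slices $A_z$: partition the fibre coordinate into $J_i=\{z:2^i\le|A_z|<2^{i+1}\}$, $0\le i\le 10\log|\mathbb{F}|$, and let $B_i$ be the union of the corresponding slices of $A$. Then $f=\sum_i f 1_{B_i}$ is a sum of $O(\log|\mathbb{F}|)$ functions with pairwise disjoint supports, each of which is regular --- the factor-$2$ window in the definition of a $(\gamma,s,t)$-regular set is exactly the dyadic window $[2^i,2^{i+1})$. Applying the hypothesis to each $f 1_{B_i}$ and using $1/2\le|f|\le1$ to pass from $\|f 1_{B_i}\|_{L^{q'}}$ to $\|1_{B_i}\|_{L^{q'}}=|B_i|^{1/q'}\le|A|^{1/q'}$, the triangle inequality over the $O(\log|\mathbb{F}|)$ pieces gives $\|\hat f\|_{L^{p'}(S,d\sigma)}\ll\log^{O(1)}(|\mathbb{F}|)\|1_A\|_{L^{q'}}\ll\log^{O(1)}(|\mathbb{F}|)\|f\|_{L^{q'}}$; the disjointness of supports is what keeps the summation over pieces to just one extra logarithm. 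Combined with the level-set reduction, this holds for every $f$, i.e.\ $\mathcal{R}^{*}(p\to q)\ll\log^{O(1)}(|\mathbb{F}|)\ll_\epsilon|\mathbb{F}|^\epsilon$ for every $\epsilon>0$.

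Finally I would apply Lemma~\ref{lem:eprem}: the paraboloid has positive Fourier dimension, so $\mathcal{R}^{*}(p\to q)\ll_\epsilon|\mathbb{F}|^\epsilon$ upgrades to $\mathcal{R}^{*}(p\to q+\delta)\ll_\delta 1$ for every $\delta>0$, which by the definition of $\mathcal{R}^{*}$ is precisely $\|\hat f\|_{L^{p'}(S,d\sigma)}\ll_\delta\|f\|_{L^{(q+\delta)'}(\mathbb{F}^n,dx)}$; reparametrizing ($(q+\delta)'\uparrow q'$ as $\delta\downarrow 0$, so any sufficiently small $\epsilon>0$ equals $q'-(q+\delta)'$ for a suitable $\delta$) gives the stated conclusion. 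I expect the only real obstacle to be bookkeeping: running the two pigeonholing steps in the right order, checking that the regular pieces have disjoint supports so that the total loss stays polylogarithmic, and verifying that the $\epsilon$-removal step legitimately converts the $|\mathbb{F}|^\epsilon$ constant into the $\epsilon$-loss in the exponent. There is no genuine analytic difficulty here beyond what is already packaged in Lemma~\ref{lem:eprem}.
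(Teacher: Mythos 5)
Your proposal is correct and follows essentially the same two-step dyadic pigeonholing (first on level sets of $|f|$, then on the sizes of the horizontal slices $A_z$) followed by $\epsilon$-removal (Lemma~\ref{lem:eprem}) as the paper's own argument. The only small imprecision is attributing the single-logarithm loss to disjointness of the $B_i$: the triangle inequality already gives $O(\log|\mathbb{F}|)$ times the max, and the max is controlled because each $B_i$ is a subset of the full support, so disjointness is not actually needed there.
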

\section{Proof of Theorem \ref{thm:incTores}}
We start by proving some estimates for regular functions.
\begin{lemma}\label{lem:mt1} Let $h$ be a $(\gamma, s, t)$-regular function. Furthermore, assume that for any set $E \subseteq \mathbb{F}^2$ of size $\frac{1}{2}|\mathbb{F}|^s \leq |E| \leq 2 |\mathbb{F}|^s$, we have that $||(1_{E} d\sigma)^\vee ||_{L^4(\mathbb{F}^3, dx)} \leq |E|^{(1+\alpha)/4}|F|^{-5/4}$. Then
$$||\hat{h}||_{L^{2}(S,d\sigma)} \ll   || h ||_{L^{2}(\mathbb{F}^3,dx) } + || h ||_{L^{\frac{8(s+t)}{7t-1+s(4+\alpha)}}(\mathbb{F}^3,dx) }.$$
\end{lemma}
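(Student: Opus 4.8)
The plan is to run the Stein--Tomas $TT^{*}$ argument underlying Lemma~\ref{Lem:STsupp}, but to slice $h$ along the $x_{3}$-direction and, instead of estimating the error term $\langle h,h\ast K\rangle$ by the crude bound $\|K\|_{\infty}\|h\|_{1}^{2}$, to feed the hypothesised $L^{4}$ extension estimate into it one slice at a time. (Throughout, $S=\mathcal{P}$ is the paraboloid.)

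First I would recall, by Plancherel exactly as at the start of the proof of Lemma~\ref{Lem:STsupp}, that
\[
\|\hat h\|_{L^{2}(\mathcal{P},d\sigma)}^{2}=\|h\|_{L^{2}(\mathbb{F}^{3},dx)}^{2}+\sum_{x,y\in\mathbb{F}^{3}}h(x)\overline{h(y)}\,K(y-x),
\]
where $K=(d\sigma)^{\vee}-\delta_{0}$; since the left side is real and non-negative, it suffices to show the last sum is $\ll|\mathbb{F}|^{(7t-1+s(4+\alpha))/4}$, which is, up to constants, the square of the claimed $L^{r}$ norm. Writing points of $\mathbb{F}^{3}$ as $(x',z)$ with $x'\in\mathbb{F}^{2}$, let $h_{z}$ be the slice of $h$ to $\{x_{3}=z\}$, supported on $A_{z}$ with $\tfrac12|\mathbb{F}|^{s}\le|A_{z}|\le 2|\mathbb{F}|^{s}$ by regularity, and let $T=\{z:A_{z}\ne\emptyset\}$, so $|T|=|\mathbb{F}|^{t}$. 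The key point is the standard Gauss-sum evaluation (the one behind the Fourier-dimension estimate for the paraboloid): the $w$-slice of $K$ is $0$ for $w=0$, and for $w\ne0$ it equals $|\mathbb{F}|^{-2}g(w)^{2}e(-(a'\cdot a')/4w)$ with $g(w)=\sum_{t}e(wt^{2})$ of modulus $|\mathbb{F}|^{1/2}$. Completing the square in this quadratic phase then rewrites
\[
\sum_{x,y\in\mathbb{F}^{3}}h(x)\overline{h(y)}\,K(y-x)=\sum_{z\in T}\ \sum_{(y',z')\in A,\,z'\ne z}\beta(y',z')\,(h_{z}\,d\sigma)^{\vee}\bigl(\Phi_{z}(y',z')\bigr),
\]
where $|\beta|\le|\mathbb{F}|$, $(h_{z}\,d\sigma)^{\vee}$ is the extension operator applied to $h_{z}$ viewed as a function on $\mathbb{F}^{2}\cong\mathcal{P}$, and $\Phi_{z}(y',z')=\bigl(y'/2(z'-z),\,-1/4(z'-z)\bigr)\in\mathbb{F}^{3}$; since $z'\mapsto-1/4(z'-z)$ and $y'\mapsto y'/2(z'-z)$ are injective, $\Phi_{z}$ is injective on $\{(y',z')\in A:z'\ne z\}$, so its image has at most $|A|$ points.

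Next, for each fixed $z\in T$ I would apply Cauchy--Schwarz in the variable $(y',z')$: the $\beta$-factor contributes $(|A|\,|\mathbb{F}|^{2})^{1/2}$, and the remaining factor, using injectivity of $\Phi_{z}$ and then H\"older, is at most $\bigl(|A|^{1/2}\|(h_{z}\,d\sigma)^{\vee}\|_{L^{4}(\mathbb{F}^{3})}^{2}\bigr)^{1/2}$. By the hypothesis applied with $E=A_{z}$, together with the elementary domination $\|(h_{z}\,d\sigma)^{\vee}\|_{L^{4}}\le\|(1_{A_{z}}\,d\sigma)^{\vee}\|_{L^{4}}$ (valid since $|h_{z}|\le 1_{A_{z}}$, as noted at the start of the proof of Proposition~\ref{prop:L4}), this $L^{4}$ norm is $\ll|A_{z}|^{(1+\alpha)/4}|\mathbb{F}|^{-5/4}$. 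Combining, the inner sum is $\ll|\mathbb{F}|^{-1/4}|A|^{3/4}|A_{z}|^{(1+\alpha)/4}$; summing over the $|\mathbb{F}|^{t}$ values of $z$ and inserting $|A_{z}|\ll|\mathbb{F}|^{s}$, $|A|=|\mathbb{F}|^{s+t}$, the exponents collapse to $\ll|\mathbb{F}|^{(-1+s(4+\alpha)+7t)/4}$, which equals $\|h\|_{L^{8(s+t)/(7t-1+s(4+\alpha))}(\mathbb{F}^{3},dx)}^{2}$ up to constants (as $\tfrac12\le|h|\le1$ on its support). Together with the $\|h\|_{2}^{2}$ term this gives the assertion, and since no dyadic pigeonholing was used, no logarithmic loss appears.

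The step I expect to be the main obstacle is the algebraic reduction in the second paragraph: completing the square so that the chirp multiplications and dilations assemble into honest point-values of $(h_{z}\,d\sigma)^{\vee}$, keeping precise track of the $|\mathbb{F}|^{\pm2}$ normalisations in $(d\sigma)^{\vee}$ and of the Gauss-sum modulus so that the exponent $-5/4$ of the hypothesis enters with the right sign, and checking the injectivity of $\Phi_{z}$ that legitimises the $\ell^{4}$-to-$\ell^{2}$ H\"older step after passing to the union over slices. Everything downstream of that identity is routine bookkeeping of exponents.
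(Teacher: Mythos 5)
Your proposal is correct and takes essentially the same route as the paper: the same Plancherel/$TT^{*}$ setup, the same slicing of $h$ in the $x_{3}$-direction, the same Gauss-sum evaluation of $K$ and completion of the square (the pseudo-conformal change of variables) converting convolution with $K$ into the extension operator applied to each slice, and the same insertion of the hypothesised $L^{4}$ bound via $|h_{z}|\leq 1_{A_{z}}$. The only cosmetic difference is that you bound the bilinear form $\langle h, h\ast K\rangle$ slice-by-slice using Cauchy--Schwarz over the support together with the injectivity of $\Phi_{z}$, whereas the paper bounds $\|h_{z}\ast K\|_{L^{4}(\mathbb{F}^{3},dx)}$ and then pairs with $\|h\|_{L^{4/3}}$ by H\"older; both yield identical exponents.
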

\begin{proof}
Define $h_{z}(x_1,x_2,x_3) = h(x_1,x_2,x_3)$ for $x_3=z$ and $0$ otherwise. Thus $h = \sum_{z} h_{z}$. We start by estimating
$$||h_{z} * (d\sigma)^{\vee}||_{L^{4}(\mathbb{F}^3,dx)}.$$
Note by translation symmetry, we may assume that $z=0$. Recall that $(d\sigma)^{\vee} = \delta_{0} + K$ where $K$ is the Bochner-Riesz kernel associated to $\mathcal{P}$. From its definition we have that $K(\underline{x},x_3) = |\mathbb{F}|^{-2} S(x_3)^2 e(\underline{x}\cdot \underline{x}/4 x_3)$ where $S(x_3 ) = \sum_{\xi \in \mathbb{F}} e(x \xi^2)$ is a Gauss sum. Thus
\[||h_{z} *K ||_{L^{4}(\mathbb{F}^3,dx)} =\left( \sum_{x_3 \in \mathbb{F}, x_3 \neq 0}  \sum_{\underline{x} \in \mathbb{F}^2} \left| |\mathbb{F}|^{-2} |S(x_3)|^2 \sum_{y \in \mathbb{F}^2} h_{0}(\underline{y},0) e( (\underline{x}-\underline{y})\cdot(\underline{x}-\underline{y})/4x_3)    \right|^4  \right). \]
Using the pseudo-conformal transformation (for $x_3 \neq 0$) $t:=1/4x_3$ and $z:= -\underline{x}/2x_3$, we have that
\[ (\underline{x}-\underline{y}) \cdot (\underline{x}-\underline{y}) / 4 x_3 = z^2 x_3 +x \cdot \underline{y} + t \underline{y}^2. \]
Additionally noting that $|S(x_3)|^2 = |\mathbb{F}|$ (for $x_3 \neq 0$) by standard Gauss sum estimates (see \cite{MT}, for instance). We now have that
\[|| h_{z} * K||_{L^{4}(\mathbb{F}^3,dx)} \ll |\mathbb{F}| \left( \sum_{(z,t) \in \mathbb{F}^3} |(h_z d\sigma)^{\vee}(z,t)|^{4}  \right)^{1/4}  \]
\[ \ll |\mathbb{F}|^{-1/4+s(1+\alpha)/4}.\]
where we have used the hypothesis that $||(1_{E} d\sigma)^\vee ||_{L^4(\mathbb{F}^3, dx)} \leq |E|^{(1+\alpha)/4}|F|^{-5/4}$. Thus,
\[ || h * K||_{L^{4}(\mathbb{F}^3,dx)} \leq \sum_{z}|| h_{z} * K||_{L^{4}(\mathbb{F}^3,dx)} \ll |\mathbb{F}|^{t-1/4+s(1+\alpha)/4}.\]
Since $||h * \delta_{0}||_{L^4(\mathbb{F}^3, dx)} \ll ||h ||_{L^4(\mathbb{F}^3, dx)} \ll |\mathbb{F}|^{(s+t)/4} $ by Young's inequality, it follows that
\[ || h * (d \sigma)^{\vee}||_{L^{4}(\mathbb{F}^3,dx)} \ll |\mathbb{F}|^{t-1/4+s(1+\alpha)/4} + |\mathbb{F}|^{(s+t)/4}. \]
By H\"{o}lder's inequality this implies
\[ |\left<h, h * (d \sigma)^{\vee} \right> | \ll  \left( |\mathbb{F}|^{t-1/4+s(1+\alpha)/4}  + |\mathbb{F}|^{(s+t)/4}\right) ||h||_{L^{4/3}(\mathbb{F}^3,dx)} .\]
which gives
\[ |\left<h, h * (d \sigma)^{\vee} \right> | \ll  \left( |\mathbb{F}|^{t-1/4+s(1+\alpha)/4}  + |\mathbb{F}|^{(s+t)/4}\right) |\mathbb{F}|^{3(s+t)/4}\]
or
\[ ||\hat{h}||_{L^{2}(S,d\sigma)} \ll  |\mathbb{F}|^{(s+t)/2} +|\mathbb{F}|^{7t/8-1/8+s(1/2+\alpha/8)} . \]
We want to write this last expression in the form $|E|^{1/r} = |\mathbb{F}|^{\frac{s+t}{r}} $. In other words, $r$ is given by $7t/8-1/8+s(1/2+\alpha/8) =\frac{s+t}{r}$. Thus:
\[ ||\hat{h}||_{L^{2}(S,d\sigma)} \ll || h ||_{L^{2}(\mathbb{F}^3,dx) } + || h ||_{L^{\frac{8(s+t)}{7t-1+s(4+\alpha)}}(\mathbb{F}^3,dx) } . \]
\end{proof}
Clearly the estimate
\begin{equation}\label{eq:l2est}
 ||\hat{h}||_{L^{2}(S,d\sigma)} \ll  || h ||_{L^{\frac{8(s+t)}{7t-1+s(4+\alpha)}}(\mathbb{F}^3,dx) } + || h ||_{L^{2}(\mathbb{F}^3,dx) }.
\end{equation}
gets better as the exponent $\frac{8(s+t)}{7t-1+s(4+\alpha)}$ increases. Fixing $s+t$ (and recalling that $0\leq t \leq 1$), and assuming $4/3 \leq \alpha \leq 3/2$ we see that the exponent $\frac{8(s+t)}{7t-1+s(4+\alpha)}$ is smallest (that is least favorable) at $t=1$. Generally we'll set $t=1$ in the following to capture the potential worse case behavior.  However, we note that the hypothesis of Proposition \ref{prop:L4} requires the hypothesis $\mathcal{I}(\alpha, s)$ or $\mathcal{I}(\alpha, \gamma -t)$. As we decrease $t$ we see that the hypothesis $\mathcal{I}(\alpha, \gamma -t)$ becomes stronger. Thus we can only reason that $t=1$ is the worst case if $\mathcal{I}(\alpha, \gamma -t)$ holds for all $0 \leq t\leq 1$. However, we also always have the trivial hypothesis $\mathcal{I}(3/2, \gamma -t)$, regardless of $\gamma$ and $t$. Thus, to reduce to the case when $t=1$ it suffices to only require $\mathcal{I}(\alpha, \gamma -t)$ for values of $t$ bigger than those for which assuming $\mathcal{I}(3/2, \gamma -t)$ would provide a sufficient estimate. More precisely, we have
\begin{lemma}\label{lem:reg}Let $h$ be a $(\gamma, s, t)$-regular function. If $\mathcal{I}(\alpha, \frac{2}{3}(3\gamma- \alpha \gamma - 3 + \alpha ))$ holds then
\[ ||\hat{h}||_{L^{2}(S,d\sigma)} \ll  || h ||_{L^{\frac{8\gamma}{6+(\gamma - 1)(4+\alpha)}}(\mathbb{F}^3,dx) } + || h ||_{L^{2}(\mathbb{F}^3,dx) }. \]
\end{lemma}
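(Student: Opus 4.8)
The strategy is to start from the conclusion of Lemma \ref{lem:mt1}, namely
\[
 ||\hat{h}||_{L^{2}(S,d\sigma)} \ll  || h ||_{L^{2}(\mathbb{F}^3,dx) } + || h ||_{L^{\frac{8(s+t)}{7t-1+s(4+\alpha)}}(\mathbb{F}^3,dx) },
\]
which is valid under the hypothesis of Proposition \ref{prop:L4} at scale $|\mathbb{F}|^{s}$, i.e.\ under $\mathcal{I}(\alpha, s) = \mathcal{I}(\alpha, \gamma - t)$. The target exponent $\frac{8\gamma}{6+(\gamma-1)(4+\alpha)}$ is exactly what one gets by substituting $t=1$, $s=\gamma-1$ into $\frac{8(s+t)}{7t-1+s(4+\alpha)}$; so the whole content of the lemma is that the case $t=1$ is the worst case, and that reducing to it does not require $\mathcal{I}(\alpha,\gamma-t)$ for the small values of $t$ where that hypothesis would be too strong.

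First I would fix $\gamma=s+t$ and view the exponent $r(t) := \frac{8\gamma}{7t-1+(\gamma-t)(4+\alpha)}$ as a function of $t\in[0,1]$. Writing the denominator as $7t-1+(\gamma-t)(4+\alpha) = (\gamma)(4+\alpha) -1 + t(7-(4+\alpha)) = (\gamma)(4+\alpha)-1+t(3-\alpha)$, one sees that for $\alpha\le 3$ the denominator is increasing in $t$, hence $r(t)$ is decreasing in $t$, so $r(1)$ is the smallest (least favorable) value and $r(1) = \frac{8\gamma}{6+(\gamma-1)(4+\alpha)}$ after simplifying $7-1 + (\gamma-1)(4+\alpha)$. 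Thus for any $t\le 1$, Lemma \ref{lem:mt1} already gives the desired bound with the stated exponent — provided its hypothesis $\mathcal{I}(\alpha,\gamma-t)$ is available. For $t$ close to $1$, $\gamma - t$ is small and $\mathcal{I}(\alpha,\gamma-t)$ follows from the hypothesis $\mathcal{I}(\alpha,\beta)$ of the lemma with $\beta = \frac23(3\gamma-\alpha\gamma-3+\alpha)$, since $\gamma - t \le \gamma$ and incidence hypotheses are monotone: $\mathcal{I}(\alpha,\beta)$ implies $\mathcal{I}(\alpha,\beta')$ for all $\beta'\le\beta$. (One should check $\gamma - 0 \le \beta$ is not required; only the relevant range of $t$ matters, see below.)

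The remaining case is small $t$, where $s=\gamma-t$ may exceed $\beta$ and $\mathcal{I}(\alpha,s)$ is not granted. Here I would instead invoke the \emph{trivial} incidence bound, which is $\mathcal{I}(3/2, s)$ for every $s$ (this is the "easy and well-known" estimate $|I(A,L)|\ll|A|^{3/2}$ for $|A|\sim|L|$ recorded in the introduction). Feeding $\alpha = 3/2$ into Lemma \ref{lem:mt1} gives
\[
 ||\hat h||_{L^2(S,d\sigma)} \ll ||h||_{L^2(\mathbb{F}^3,dx)} + ||h||_{L^{\frac{8(s+t)}{7t-1+s(11/2)}}(\mathbb{F}^3,dx)},
\]
and I would show that for $t$ in the relevant small range this exponent is already $\ge \frac{8\gamma}{6+(\gamma-1)(4+\alpha)}$, so the trivial bound suffices. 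Setting the two exponents (the $\alpha$-one at $t=1$ and the $\tfrac32$-one at general $t$) equal and solving for $t$ pins down the threshold $t_0$; above $t_0$ we use the genuine hypothesis $\mathcal{I}(\alpha,\gamma-t)$, below $t_0$ we use the trivial one. The algebra of equating $7t-1+(\gamma-t)(11/2) \le \frac{6+(\gamma-1)(4+\alpha)}{\gamma}\,(s+t)=\frac{6+(\gamma-1)(4+\alpha)}{\gamma}\,\gamma$ and combining with $s=\gamma-t\le\beta$ is exactly what produces the stated value $\beta=\frac23(3\gamma-\alpha\gamma-3+\alpha)$: it is the largest $s=\gamma-t$ for which we still need the nontrivial hypothesis, i.e.\ the value of $s$ at $t=t_0$.

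The main obstacle is purely bookkeeping: verifying that the threshold $t_0$ coming from "trivial bound is good enough" coincides with the $t$ for which $\gamma - t$ equals the claimed $\beta$, so that the two regimes (nontrivial hypothesis for $t\ge t_0$, trivial for $t\le t_0$) glue together with no gap. One must also double-check the monotonicity claim on $r(t)$ holds throughout the range of interest $\alpha\in[4/3,3/2]$ (so $3-\alpha>0$, giving a strictly increasing denominator), and handle the degenerate endpoints $t=0$ and the $||h||_{L^2}$ term uniformly. None of these steps is deep, but the simplification of the exponents and the identification of $\beta$ require care.
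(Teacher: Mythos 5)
Your proposal is correct and follows essentially the same route as the paper: reduce to $t=1$ via monotonicity of the exponent in $t$, use the trivial incidence bound $\mathcal{I}(3/2,\cdot)$ for small $t$, and determine the crossover $t_0$ by equating the resulting exponent with the target $\frac{8\gamma}{6+(\gamma-1)(4+\alpha)}$, so that the hypothesis $\mathcal{I}(\alpha,\beta)$ is only invoked for $s=\gamma-t\le\beta$. The algebra you outline (solving for $t_0$ and checking $\gamma-t_0=\frac{2}{3}(3\gamma-\alpha\gamma-3+\alpha)$) is exactly what the paper carries out.
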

\begin{proof}If $\mathcal{I}(\alpha, \gamma)$ holds then the conclusion is just Lemma \ref{lem:mt1} with $t=1$, by the previous discussion.  Now, for fixed $\gamma$, lets assume we have $\mathcal{I}(\alpha, \beta)$ for some $\alpha< 3/2$ and $\beta \leq \gamma$, as well as the trivial estimate $\mathcal{I}(3/2, \gamma)$. We set the implied exponents in (\ref{eq:l2est}) equal and compute how small $t$ must be before the estimate implied by the trivial bound is superior.
\[\frac{8\gamma }{6+(\gamma-1)(4+\alpha)} = \frac{8\gamma }{7t-1+(\gamma -t)(4+3/2)}  \]
which gives
\[ 6+(\gamma-1)(4+\alpha) = 7t-1+(\gamma -t)(11/2) \]
Solving for $t$ we have
\[t = \frac{2}{3}\alpha(\gamma-1)-\gamma +2.\]
We may conclude that whenever $t \leq \frac{2}{3}\alpha(\gamma-1)-\gamma +2$ we have
$$ ||\hat{h}||_{L^{2}(S,d\sigma)} \ll  || h ||_{L^{\frac{8\gamma}{6+(\gamma - 1)(4+\alpha)}}(\mathbb{F}^3,dx) }.$$
 Thus we only require the hypothesis $\mathcal{I}(\alpha, \beta)$ on sets of size smaller than $\gamma -t =  \frac{2}{3}(3\gamma- \alpha \gamma - 3 + \alpha )$, which is the claim.
\end{proof}
\begin{remark}\label{rem:alpha1}As one can see from the above argument, one only needs $\mathcal{I}(\alpha, \gamma-1)$ to hold when $t=1$ and a family of weaker claims such as $\mathcal{I}(\alpha(t), \gamma -t)$ where $\alpha(t)$ increase as $t$ decreases to carry out the above argument.
\end{remark}
Thus assuming $\mathcal{I}(\alpha, \frac{2}{3}(3\gamma- \alpha \gamma - 3 + \alpha ))$, we have the estimate:
\[||\hat{h}||_{L^{2}(S,d\sigma)} \ll  || h ||_{L^{\frac{8\gamma}{6+(\gamma - 1)(4+\alpha)}}(\mathbb{F}^3,dx) }+ || h ||_{L^{2}(\mathbb{F}^3,dx) } \]
which, for fixed $4/3 \leq \alpha \leq 3/2 $, improves as $\gamma$ increases (since $\frac{d}{d\gamma} \frac{8\gamma}{6+(\gamma-1)(4+\alpha)}=\frac{8(2-\alpha)}{(\alpha(\gamma-1)+4\gamma+2)^2}$).  On the other hand, by Corollary \ref{cor:stdecay} we have, for all sets $E$ with $|E| = |\mathbb{F}|^{\gamma}$, that
\begin{equation}
 ||\widehat{1_{E}}||_{L^{2}(S,d\sigma)} \ll  ||1_{E} ||_{L^{\frac{2\gamma}{2\gamma-1}}(\mathbb{F}^n, dx)} + || 1_{E}  ||_{L^{2}(\mathbb{F}^3,dx) }
\end{equation}
This estimate improves as $\gamma$ decreases. Setting the exponents in our two estimates equal we have
\[\frac{2\gamma}{2\gamma-1} = \frac{8\gamma}{6+(\gamma - 1)(4+\alpha)}. \]
Solving for $\gamma$ we have
\[\gamma = \frac{6-\alpha}{4-\alpha }.\]
Thus by applying Lemma \ref{cor:stdecay} when $ 0 \leq \gamma \leq \frac{6-\alpha}{4-\alpha }$ and Lemma \ref{lem:reg} when $\frac{6-\alpha}{4-\alpha } \leq \gamma \leq 3$ we can conclude (for $\frac{12-2\alpha}{8-\alpha} \leq 2$) that
\[ ||\widehat{1_{E}}||_{L^{2}(S,d\sigma)} \ll || 1_{E} ||_{L^{ \frac{12-2\alpha}{8-\alpha}} (\mathbb{F}^n, dx)}  \]
if $\mathcal{I}(\alpha, \frac{2}{3}( \gamma(3-\alpha) - 3 + \alpha )$ holds for $\gamma \geq  \frac{12-2\alpha}{8-\alpha}$. Of course, this won't hold for very large $\gamma$ if $\alpha < \frac{3}{2}$. On the other hand, for large enough $\gamma$ the estimate implied by the trivial incidence theorem $\mathcal{I}(\frac{3}{2},\gamma)$, will be sufficient. This will occur when $\gamma$ satisfies
\[\frac{12-2\alpha}{8-\alpha}  = \frac{8\gamma}{6+(\gamma - 1)(11/2)}. \]
Solving this gives $\gamma = \frac{6-\alpha}{3\alpha -2}$. Thus to have the conclusion of Theorem \ref{thm:incTores} we need  $\mathcal{I}(\alpha, \frac{2}{3}(3\gamma- \alpha \gamma - 3 + \alpha ))$ to hold only for $\frac{6-\alpha}{4-\alpha } \leq \gamma \leq \frac{6-\alpha}{3\alpha -2}$. Since $\frac{2}{3}(3\gamma- \alpha \gamma - 3 + \alpha )$ is an increasing function of $\gamma$ we see that it suffices to require  $\mathcal{I}(\alpha, \frac{2}{3}(3\gamma- \alpha \gamma - 3 + \alpha ))$ for $\gamma = \frac{6-\alpha}{3\alpha -2}$. That is:
\[ I\left( \alpha, \frac{8(\alpha -3)(\alpha -2) }{9\alpha -6  }\right).\]
We thus conclude that if $\mathcal{I}\left( \alpha, \frac{8(\alpha -3)(\alpha -2) }{9\alpha -6  }\right)$ we have
\[   ||\widehat{f}||_{L^{2}(S,d\sigma)} \ll || f||_{L^{ \frac{12-2\alpha}{8-\alpha}} (\mathbb{F}^n, dx)}   \]
for all regular functions $f$. By Lemma \ref{lem:regular}, we may now conclude:
\newtheorem*{thm:incTores}{Theorem \ref{thm:incTores}}
\begin{thm:incTores}
Let $\mathbb{F}$ be a finite field such that $-1$ is not a square. Furthermore, assume $I\left( \alpha, \frac{8(\alpha -3)(\alpha -2) }{9\alpha -6  }\right)$. Then
\[\mathcal{R}^{*}\left(2\rightarrow \frac{12 - 2 \alpha}{4-\alpha} + \epsilon \right) \ll 1  \]
for all $\epsilon >0$.
\end{thm:incTores}
\begin{remark}\label{rem:alpha2}It is clear from the above argument that we only need the full strength of the of the estimate $\mathcal{I}(\alpha, \beta)$ at $\gamma = \frac{6-\alpha}{4-\alpha}$, and then progressively weaker values of $\alpha$ as $\gamma$ increases to $\frac{6-\alpha}{3\alpha-2}$. In turn, by the previous remark we see that at $\gamma = \frac{6-\alpha}{4-\alpha}$ we only need the full strength of $\mathcal{I}(\alpha, \beta)$ at $\beta = \gamma-1 = \frac{2}{4-\alpha}$, and then progressively weaker values of $\alpha$ as $\beta$ increases to $\frac{8(\alpha -3)(\alpha -2) }{9\alpha -6  }$.
\end{remark}
\section{General finite fields with $-1$ not a square}
We will now consider the case in which the finite field $\mathbb{F}$ might not be of prime order. Essentially none of the previous discussion explicitly required the finite field to be of prime order, however in general finite fields statements such $\mathcal{I}(\alpha, \gamma)$ with $\alpha < \frac{3}{2}$ can fail if the field contains a subfield of a certain size. In the appendix we will prove an incidence theorem in general finite fields which will be able to combine with the previous arguments. The argument closely follows the original argument of Bourgain, Tao and Katz \cite{BKT}. Our goal now is to prove the following:
\newtheorem*{thm:general}{Theorem \ref{thm:general}}
\begin{thm:general}
Let $\mathbb{F}$ be an arbitrary finite field such that $-1$ is not a square.  Then
\[\mathcal{R}^{*}\left(2\rightarrow \frac{18}{5}-\epsilon \right) \ll 1  \]
holds for some $\epsilon >0$.
\end{thm:general}
First, we describe a few adjustments to the setup used in the previous section.   We let $0 < \delta < \frac{1}{1000}$ be a small parameter to be set later. We set the problem up as before, applying Lemma \ref{lem:regular} to reduce to the case of regular functions. We let $f$ be a $(\gamma, s,t )$-regular function supported on the $(\gamma, s,t )$-regular set $E$. If $\gamma \leq \frac{9}{5} - \delta$ then by Corollary \ref{cor:stdecay}, we have that
\[ ||\widehat{f }||_{L^{2}(S,d\sigma)} \ll ||f||_{L^{ \frac{18}{13} + \delta'  } (\mathbb{F}^n, dx)}.\]
Similarly, by Lemma \ref{lem:charinfST} and Proposition \ref{prop:localres} if $\gamma \geq \frac{9}{5} + \delta$ we have
\[ ||\widehat{f }||_{L^{2}(S,d\sigma)} \ll ||f||_{L^{ \frac{18}{13} + \delta''  } (\mathbb{F}^n, dx)}.\]
Thus it suffices to consider regular functions with $\ \frac{9}{5} - \delta \leq \gamma \leq  \frac{9}{5} + \delta$.
Now let $\frac{\theta}{100} = \delta >0$, $t \geq 1-\theta$  and assume that $s+t =\gamma \in ( 9/5 - \delta, 9/5 +\delta)$.
By Lemma \ref{lem:mt1}, we have
\[||\hat{f}||_{L^{2}(S,d\sigma)} \ll || f ||_{L^{\frac{8(s+t)}{7t-1+s(11/2)}}(\mathbb{F}^3,dx)} \]
just from the trivial incidence bound at $\alpha=3/2$. Now we claim that if $\delta >0$ is sufficiently small this will be $\ll ||f||_{L^{ \frac{18}{13} + \delta'''  } (\mathbb{F}^n, dx)} $ for some small $\delta''' >0$  in the range of parameters specified. To see this we note that the exponent on the right is lower bounded by
\[\frac{8\cdot(9/5 - \delta )}{7(1-\theta) -1+(4/5+\delta+\theta)(11/2)} \geq \frac{ 72 - 2\theta/5 }{52 - 289\theta/40} \geq  \frac{18}{13} + \frac{74\theta}{ 6760 - 91 \theta}  \geq  \frac{18}{13} + \delta'''. \]
Thus we have reduced to the case of $(\gamma,s,t)$-regular functions, where $\gamma \in ( 9/5 - \delta, 9/5 +\delta)$ and $t>1-\theta$.  We can then conclude the proof by an application of Lemma \ref{lem:mt1} if we can improve the trivial incidence bound for all sets of dimension $4/5 -\epsilon \leq  s \leq 4/5 + \epsilon$ for some $\epsilon>0$.  Roughly speaking, it turns out that the only case in which one does not have a non-trivial incidence estimate is when (an appropriate transformation of) the set $E$ is contained in the Cartesian product $G \times G$ of a subfield $G \subseteq \mathbb{F}$ with itself. However, we can never have a subfield of $\mathbb{F}$ of dimension near $2/5$ since if $|\mathbb{F}| = p^\ell$, then $|G|=p^{j}$ for some integer $j \mid \ell$.  More precisely, we have that:
\newtheorem*{thm:inc}{Theorem \ref{thm:inc}}
\begin{thm:inc}
Let $P \subseteq \mathbb{F}^2$ of a finite field $\mathbb{F}$, and $L$ a set of lines in $\mathbb{F}^2$ such that $|P|=|L|=N$. Moreover, assume that
\[ |I(P,L)| \geq |\mathbb{F}|^{-\epsilon} N^{3/2}.\]
Then, for some large absolute constant $C$, there exists the following: (i) a subfield $G \subseteq F$ such that $|G| \leq C |\mathbb{F}|^{\epsilon C}|P|^{1/2}$ (ii) sets of the form $S_{1}=a_{1}+b_{1}\cdot G$ and $S_{2}=a_{2}+b_{2}\cdot G$, (iii) a projective transformation\footnote{See the discussion in the appendix for a definition of this.} $T$, such that
\[    | \left(S_{1} \times S_{2}\right) \cap T(P) | \geq C^{-1} |\mathbb{F}|^{-\epsilon C} |P| .\]
\end{thm:inc}
Here we've taken $K=|\mathbb{F}|^{\epsilon}$ (and noted that we may take $G_1 = G_2$) in the formulation below. Essentially, this follows from the arguments of Bourgain, Katz and Tao \cite{BKT}, using the sum-product theorem in general finite fields from \cite{TV}. Lacking a reference to such a formulation in the literature, we have included a proof in the appendix.
\appendix
\setcounter{secnumdepth}{0}
\section{Appendix: A General Incidence Theorem}
This appendix is devoted to proving a variant of the Bourgain-Katz-Tao incidence theorem. While this is a statement about incidences in the finite plane $\mathbb{F}^2$, to properly state the result we need to introduce projective space $\mathbb{P}\mathbb{F}^3$. Our description will be somewhat brief, and we refer the reader to, for instance, Dvir's survey article \cite{Dvir} for a more thorough description in this context. Recall that $\mathbb{P}\mathbb{F}^3$ is the set of points in $\mathbb{F}^3 \setminus (0,0,0)$ modulo dilations. We can embed $\mathbb{F}^2$ into this space by mapping $(x,y) \rightarrow (x,y,1)$. Thus $\mathbb{P}\mathbb{F}^3$ is equivalent to $\mathbb{F}^2$ union the `line at infinity'.  Now the mapping of $\mathbb{F}^2$ into $\mathbb{P}\mathbb{F}^3$ maps  points to points, lines to lines, and preserves incidences. By a projective transform we will mean a $3\times 3$ invertible matrix acting in the natural way on $\mathbb{P}\mathbb{F}^2 =  \mathbb{F}^3 \setminus (0,0,0)$. We note that this also maps points to points, lines to lines and preserves incidences.
To allow us to state the result in terms of $\mathbb{F}^2$ instead of $\mathbb{P}\mathbb{F}^3$ we will define a projective transformation of a subset $A \subseteq \mathbb{F}^2$ as follows. We let $T$ be a projective transform on $\mathbb{P}\mathbb{F}^3$ and we embed $A$ into $\mathbb{P}\mathbb{F}^2$ using the natural mapping $(x,y) \rightarrow (x,y,1)$. We then apply $T$ to this set and then map back into $\mathbb{F}^2$ using the mapping $(x,y,1) \rightarrow (x,y)$. The unfortunate complication, is that it is possible that $T$ maps some of the points onto the `line at infinity', and thus `applying a projective transformation to a set $A\subseteq \mathbb{F}^2$' has a number of undesirable properties, such as possibly reducing the cardinality of the set. It turns out that the number of points we lose in this process will always be negligible for our purposes. We now may state the result:
\begin{theorem}\label{thm:inc}Let $\mathbb{F}$ be a finite field. Let $P$ be a set of points and $L$ a set of lines in $\mathbb{F}^2$ such that $|P|=|L|=N$. Moreover, assume that
\[ |I(P,L)| \geq K^{-1} N^{3/2}.\]
Then, for a large absolute constant $C$, there exists the following: (i) subfields $G_{1}, G_{2} \subseteq F$ such that $|G_{1}|, |G_{2}| \leq C K^{C} N^{1/2}$ (ii) sets of the form $S_{1}=a_{1}+b_{1}\cdot G_{1}$ and $S_{2}=a_{2}+b_{2}\cdot G_{2}$, (iii) a projective transformation $T$, such that
\[    | \left(S_{1} \times S_{2}\right) \cap T(P) | \geq C^{-1} K^{-C} |P| .\]
\end{theorem}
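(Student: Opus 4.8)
The plan is to follow the Bourgain–Katz–Tao argument \cite{BKT}, substituting their prime-field sum-product input with the general finite field sum-product theorem of Tao and Vu \cite{TV}, and carefully tracking the loss of points incurred whenever a projective transformation pushes part of a set onto the line at infinity; one works throughout in the regime $K \le |\mathbb{F}|^{c}$ for a small absolute constant $c$, so that multiplicative errors of size $K^{\pm O(1)}$ dominate the additive errors produced by these degeneracies. \emph{Step 1: passing to a rich sub-configuration.} From $|I(P,L)| \ge K^{-1}N^{3/2}$, dyadically pigeonhole the incidences by the number of lines through each point and the number of points on each line, then run the standard popularity cleaning (a Cauchy–Schwarz argument discarding points on few lines and lines through few points). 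This produces $P' \subseteq P$, $L' \subseteq L$ with $|P'|, |L'| \gg K^{-O(1)}N$ such that every point of $P'$ lies on at least $m := K^{-O(1)}N^{1/2}$ lines of $L'$ and every line of $L'$ contains at least $m$ points of $P'$.

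\emph{Step 2: projective normalization and grid extraction.} Fix a point $x_0 \in P'$; it lies on $\gg m$ lines of $L'$. After a routine choice one may pick a projective transformation $T_1$ sending $x_0$ to a point at infinity so that these lines become a pencil of parallel (``vertical'') lines $\{x = a : a \in A\}$ with $|A| \gg m$, and so that only a negligible portion of $P'$ is carried onto the line at infinity; thus most of $T_1(P')$ lies on the vertical lines, each still carrying $\gg m$ points. Applying the analogous normalization at a second popular point via a further projective transformation $T_2$ brings a second pencil into ``horizontal'' position, and a pigeonholing produces a set $B$ with $|B| \gg K^{-O(1)}N^{1/2}$ such that, with $T := T_2 T_1$, the image $T(P)$ contains $\gg K^{-O(1)} N \gg K^{-O(1)} |A|\,|B|$ points of the grid $A \times B$. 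The remaining rich lines of $L'$ — those neither vertical nor horizontal — each still meet this grid in $\gg m$ points, and the equation of such a line forces a fractional-linear relation carrying a large subset of $A$ into $B$ and vice versa.

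\emph{Step 3: sum-product structure and conclusion.} Composing two of the fractional-linear relations from Step 2 and pigeonholing over their parameters — the core computation of \cite{BKT} — produces a subset $A_0 \subseteq A$ with $|A_0| \gg K^{-O(1)}|A|$ satisfying $|A_0 + A_0| \ll K^{O(1)}|A_0|$ and $|A_0 \cdot A_0| \ll K^{O(1)}|A_0|$. Since $|A_0| \sim K^{-O(1)} N^{1/2}$ is neither bounded nor comparable to $|\mathbb{F}|$ in the relevant range of $N$, the pertinent alternative of the sum-product theorem of \cite{TV} applies: $A_0$ lies in the union of $O(K^{O(1)})$ sets of the form $a + b\cdot G$ for a single subfield $G \subseteq \mathbb{F}$ with $|G| \ll K^{O(1)}|A_0| \ll K^{O(1)} N^{1/2}$. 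Pigeonholing to one such set gives $S_1 = a_1 + b_1 \cdot G_1$ (with $G_1$ a subfield, $|G_1| \ll K^{O(1)}N^{1/2}$) meeting $A$ in $\gg K^{-O(1)}|A|$ points; the same reasoning in the second coordinate produces $S_2 = a_2 + b_2 \cdot G_2$; intersecting with the grid points of $T(P)$ lying in $A \times B$ yields $|(S_1 \times S_2) \cap T(P)| \gg K^{-O(1)}|P|$. Collecting all the polynomial-in-$K$ losses determines the absolute constant $C$.

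\emph{Main obstacle.} Two points require care. First, one must verify that the cumulative loss of points to the line at infinity — over $T_1$, $T_2$, and the auxiliary transformations used to put the fractional-linear relations into normal form — remains negligible against the $K^{-O(1)}N$-scale counts being tracked; this is precisely what dictates the restriction $K \le |\mathbb{F}|^{c}$. Second, the fractional-linear step of Step 3 must be run over a field that may contain proper subfields, which is exactly why one needs the qualitative Tao–Vu sum-product theorem, whose conclusion is containment in a subfield coset, in place of the purely numerical sum-product estimate available over prime fields.
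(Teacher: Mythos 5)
Your proposal follows essentially the same BKT-style route as the paper: popularity cleaning, projective normalization onto a Cartesian grid via a bush construction at two distinguished points, Balog--Szemer\'edi--Gowers to extract approximate additive and multiplicative closure, and the Tao--Vu general-field sum-product theorem in place of the prime-field input. The paper packages the argument into three explicit lemmas (incidence-to-grid via an expectation estimate on $|U(p)\cap U(q)|$, grid-to-algebraic-structure via the relation $(1-y)x_0+yx_1\in A$, and a structure lemma invoking BSG, Pl\"unnecke--Ruzsa, and Tao--Vu), but the underlying combinatorics matches your outline.
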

We follow the expositions in \cite{BKT} and \cite{Dvir}. The existence of a result of this form is alluded to in \cite{BKT}, however no precise formulation is given there. In addition, Jones has given an explicit incidence theorem in general finite fields \cite{Jones2}, however his formulation and parameters aren't sufficient for our purpose.
First we need the following sum-product theorem, as given it is Theorem 2.55 from \cite{TV}. This is a slight strengthening of the results of Bourgain, Katz and Tao \cite{BKT} and Bourgain, Glibichuk and Konyagin \cite{BGK}.
\begin{theorem}\label{lem:sumprod}(Sum-product theorem) Let $A$ be a finite non-empty subset of a field $F$, and let $K \geq 1$. Then the following statements are equivalent in the sense that if the first statement holds with constant $C_1$, then the second holds with some other constant $C_2$ (depending only on $C_{1}$) and vise versa.
(i) $|A+A| \leq C_{1}K^{C_1} |A|$ and $|A\cdot A| \leq C_1 K^{C_1}|A|$
(ii) There exists a subfield $G$ of $\mathbb{F}$, a non-zero element $x \in \mathbb{F}$, and a set $X$ of $\mathbb{F}$ such that $|G| \leq C_{2}K^{C_2}|A|$,  $|X| \leq C_2 K^{C_2}$ and
$A \subseteq x \cdot G \cup X$.
\end{theorem}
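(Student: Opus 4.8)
The plan is to establish the two implications separately; only (i)\,$\Rightarrow$\,(ii) carries genuine content. \textbf{The direction (ii)\,$\Rightarrow$\,(i)} is a direct set computation: if $A \subseteq x\cdot G \cup X$ with $G$ a subfield, $x \neq 0$, $|G| \le C_2 K^{C_2}|A|$ and $|X| \le C_2 K^{C_2}$, then, using $G+G = G$ and $G\cdot G = G$,
\[ A+A \subseteq xG \cup (xG + X) \cup (X + X), \qquad A\cdot A \subseteq x^2 G \cup (x\cdot G\cdot X) \cup (X\cdot X), \]
so $|A+A|$ and $|A\cdot A|$ are each at most $|G|\,(1 + |X| + |X|^2) \le C_2 K^{C_2}|A|\,(1 + C_2 K^{C_2})^2$, which is of the form $C_1 K^{C_1}|A|$ for a suitable $C_1$ depending only on $C_2$.

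\textbf{For (i)\,$\Rightarrow$\,(ii)} --- the sum--product theorem itself --- I would run the Bourgain--Katz--Tao / Bourgain--Glibichuk--Konyagin argument in the form used in \cite{TV}. Write $K' := C_1 K^{C_1}$ for the common doubling constant. After disposing of the degenerate ranges where $|A|$ is bounded by an absolute constant or is comparable to $|\mathbb{F}|$ (where (ii) is immediate, taking $G$ a prime or full subfield), normalize by replacing $A$ with $a_0^{-1}A$ for a fixed $a_0 \in A$: this changes neither $|A+A|$ nor $|A\cdot A|$ and puts $1 \in A$. Then apply the Pl\"{u}nnecke--Ruzsa inequalities separately to the additive set $(A,+)$ and to the multiplicative set $(A\setminus\{0\},\times)$: the former gives $|mA - nA| \le (K')^{O(m+n)}|A|$ for all $m,n$, and the latter that any product of boundedly many copies of $A$ divided by a product of boundedly many copies of $A$ has size $\le (K')^{O(1)}|A|$.

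The heart of the matter is to make these two structures interact, which is possible only through the distributive law $a(b+c) = ab + ac$. I would run a bounded-length closure procedure: starting from $A$, form sets such as $A+A$, $A\cdot A$, and $A\cdot(A+A) \subseteq A\cdot A + A\cdot A$, and more generally the values of bounded polynomial expressions in boundedly many elements of $A$; by the Pl\"{u}nnecke bounds each such set still has size $\le (K')^{O(1)}|A|$. Either this procedure generates a set of size $\gg |\mathbb{F}|/(K')^{O(1)}$, in which case $|A|$ was already that large and one takes $G = \mathbb{F}$, or it stabilizes to a set $G$ that is simultaneously approximately additively and multiplicatively closed with $|G| \le (K')^{O(1)}|A|$; a further pigeonholing/stabilizer argument then upgrades this to a genuine subfield $G_0$ with $|G_0| \le (K')^{O(1)}|A|$, using that a subset of $\mathbb{F}$ containing $0,1$ and literally closed under $+,-,\times$ and inversion of nonzero elements is a subfield. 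A Ruzsa covering argument finally shows that $A$ is covered by $(K')^{O(1)}$ additive translates of a set controlled by $G_0$, and combining this with the multiplicative normalization yields $A \subseteq x\cdot G_0 \cup X$ with $|X| \le (K')^{O(1)}$, which is (ii).

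The step I expect to be the main obstacle is exactly this entanglement: the additive and multiplicative Pl\"{u}nnecke--Ruzsa bounds do not interact on their own, so all leverage must come from the distributive law, and the delicate point is to arrange the closure/iteration scheme so that it terminates in an \emph{actual} subfield rather than merely an approximately closed set, while keeping the number of iterations --- and hence the final constant $C_2$ --- bounded in terms of $C_1$ alone. This is precisely why the theorem is stated as an equivalence with an unspecified $C_2 = C_2(C_1)$ rather than with explicit exponents.
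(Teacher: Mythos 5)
First, a point of comparison: the paper does not prove this statement at all --- it is quoted verbatim as Theorem 2.55 of Tao--Vu \cite{TV}, described there as a slight strengthening of Bourgain--Katz--Tao \cite{BKT} and Bourgain--Glibichuk--Konyagin \cite{BGK}, and then used as a black box in the appendix. So there is no internal proof to match your argument against; what you have written is an attempted reproof of a deep external theorem. Your direction (ii)\,$\Rightarrow$\,(i) is correct and complete as a set-theoretic computation. The only minor quibble in the easy parts is your dismissal of the degenerate range: ``taking $G$ a prime subfield'' does not work when $\mathbb{F}=\mathbb{F}_p$, since there the prime subfield is the whole field and need not satisfy $|G|\leq C_2K^{C_2}|A|$; the small-$|A|$ regime has to be handled by the statement's conventions (or an explicit caveat), not by that choice of $G$.

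The direction (i)\,$\Rightarrow$\,(ii), which is the entire content of the theorem, has genuine gaps at exactly the decisive steps. (a) You claim that ``the values of bounded polynomial expressions in boundedly many elements of $A$'' still have size $(K')^{O(1)}|A|$ ``by the Pl\"unnecke bounds.'' This does not follow: Pl\"unnecke--Ruzsa applied separately to $(A,+)$ and $(A\setminus\{0\},\times)$ controls iterated sumsets and iterated product sets, but says nothing about mixed expressions such as $A\cdot A + A\cdot A$ (knowing $|A+A|\leq K'|A|$ gives no bound on $|B+B|$ for $B=A\cdot A$). Obtaining such mixed bounds --- and only after passing to a large refinement $A'\subseteq A$, since for $A$ itself they can fail --- is precisely the hard core of \cite{BKT} and \cite{BGK}, achieved through Balog--Szemer\'edi--Gowers applied to carefully chosen graphs and specific algebraic identities, not by iterating distributivity. (b) The assertion that your closure procedure either reaches size $\gg |\mathbb{F}|/(K')^{O(1)}$ or ``stabilizes'' after boundedly many steps, and that a ``pigeonholing/stabilizer argument'' then upgrades an approximately closed set to an exact subfield, is stated without any mechanism; approximate additive and multiplicative closure does not self-improve to literal closure by pigeonholing, and in the actual proofs the subfield is produced explicitly (for instance as a quotient set of the form $\{(a-b)/(c-d)\}$ over a structured refinement, or as a stabilizer-type set shown to be exactly closed under the field operations using the mixed expression bounds from step (a)). As written, your sketch assumes the two facts that constitute the theorem rather than proving them; since you correctly identify this entanglement as ``the main obstacle,'' the honest conclusion is that the proposal is a plausible roadmap to the known proof, not a proof, and for the purposes of this paper the correct move is the one the author makes: cite \cite{TV}.
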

Next we will need the following sum-set estimate, which can be found as Lemma 2.2 in \cite{BKT}.
\begin{lemma}\label{lem:sumset}(Sumset theorem) Let $A,B$ be non-empty finite subsets of an additive group such that $|A+B| \leq K \min \left(|A|,|B| \right)$. Then
$$|A \pm A| \leq C K^{C}|A|$$
for some absolute constant $C$.
\end{lemma}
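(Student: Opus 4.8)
The plan is to deduce the lemma from two standard tools of additive combinatorics, both available in \cite{TV}: Ruzsa's triangle inequality $|X-Z|\,|Y|\le|X-Y|\,|Y-Z|$, valid for all non-empty finite subsets $X,Y,Z$ of an abelian group, and the Pl\"{u}nnecke--Ruzsa inequality, which asserts that if $|U+V|\le K|U|$ then $|mV-nV|\le K^{m+n}|U|$ for all non-negative integers $m,n$. The apparent asymmetry between $A$ and $B$ in the conclusion is harmless: since $A$ and $B$ are non-empty we have $|A+B|\ge\max(|A|,|B|)$, so the hypothesis $|A+B|\le K\min(|A|,|B|)$ forces $|B|\le|A+B|\le K|A|$ and $|A|\le|A+B|\le K|B|$, i.e. $|A|$ and $|B|$ agree up to a factor $K$.

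For the difference set I would apply Ruzsa's triangle inequality with $X=Z=A$ and $Y=-B$; since $|-B|=|B|$ and $|-B-A|=|A+B|$ this gives $|A-A|\,|B|\le|A+B|^{2}$, and combining $|A+B|\le K|A|$ with $|A+B|\le K|B|$ yields $|A+B|^{2}\le K^{2}|A||B|$, hence $|A-A|\le K^{2}|A|$. For the sum set, iterating the triangle inequality tends to run in circles, so I would instead invoke the Pl\"{u}nnecke--Ruzsa inequality with $U=B$ and $V=A$: the hypothesis $|B+A|\le K|B|$ is precisely $|U+V|\le K|U|$, so $|mA-nA|\le K^{m+n}|B|$ for all $m,n\ge0$. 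Taking $m=2$, $n=0$ gives $|A+A|=|2A|\le K^{2}|B|\le K^{3}|A|$, and taking $m=n=1$ re-derives $|A-A|\le K^{3}|A|$. Thus $|A\pm A|\le K^{3}|A|$, which is the claimed assertion with, say, $C=3$; no factor of the form $CK^{C}$ is actually needed.

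The one genuine ingredient is the Pl\"{u}nnecke--Ruzsa inequality, together with the observation that it must be fed the hypothesis $|B+A|\le K|B|$ (rather than $|A+B|\le K|A|$) in order to produce bounds on iterated \emph{sums} of $A$. If one prefers a self-contained argument, the difficulty simply migrates into reproving Pl\"{u}nnecke--Ruzsa — for instance via Petridis's short restriction argument, or via the Ruzsa covering lemma ($A\subseteq X+(B-B)$ with $|X|\le K$, whence $A+A\subseteq(X+X)+(B-B)+(B-B)$) combined with elementary triangle-inequality and covering manipulations to bound $|2B-2B|$ — which is the standard route and is where essentially all of the work lies.
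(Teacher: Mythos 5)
The paper does not prove this lemma; it cites it as Lemma~2.2 of \cite{BKT}, so there is no ``paper proof'' to compare against line by line. Your argument is correct and is the standard derivation underlying that citation: the Ruzsa triangle inequality with $X=Z=A$, $Y=-B$ gives $|A-A|\,|B|\le|A+B|^{2}\le K^{2}|A||B|$, hence $|A-A|\le K^{2}|A|$, and the Pl\"{u}nnecke--Ruzsa inequality fed the hypothesis in the form $|B+A|\le K|B|$ (not $|A+B|\le K|A|$, which is the key observation you make explicit) gives $|A+A|\le K^{2}|B|\le K^{3}|A|$, using $|B|\le|A+B|\le K|A|$. The preliminary observation that $|A+B|\ge\max(|A|,|B|)$, so $|A|$ and $|B|$ agree up to a factor $K$, is exactly what licenses both uses of the hypothesis. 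Your bound $K^{3}|A|$ is in fact sharper than the stated $CK^{C}|A|$; the loose form in the statement is simply what one inherits from sources that run Pl\"{u}nnecke--Ruzsa with unoptimized constants, and it makes no difference to any application in the paper. In short: correct, and essentially the same route as the cited source and as the standard treatment in \cite{TV}.
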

We also need the Balog-Szemer\'{e}di-Gowers lemma. This can be found, for instance, as Theorem 2.29 in \cite{TV}.
\begin{lemma}\label{lem:bsg}(Balog-Szemer\'{e}di-Gowers)Let $A,B$ be finite subsets of an additive group with cardinality $|A|=|B|$, and let $G$ be a subset $A\times B$ with cardinality $|G| \geq |A||B|/K $ such that
$$|\{ a+b : (a,b) \in G \}| \leq K|A|.$$
From some large universal constant $C$, there exists subsets $A'$, $B'$ of $A$ and $B$ respectively with $|A'| \geq C K^{-C}|A|$ and $|B'| \geq C K^{-C}|B|$ such that
$|A' - B'| \leq C K^{C}|A|.$
\end{lemma}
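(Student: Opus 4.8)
The plan is to prove this by the classical two-step route: first a purely graph-theoretic statement (the ``Balog--Szemer\'edi--Gowers graph lemma'') producing refined vertex sets all of whose cross pairs are joined by many paths of length three, and then a short piece of sumset calculus. Throughout write $n := |A| = |B|$, so $|G| \geq n^2/K$, and regard $G$ as the edge set of a bipartite graph between $A$ and $B$. Put $S := \{a+b : (a,b) \in G\}$, so $|S| \leq Kn$ by hypothesis.

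\emph{The graph lemma.} I would first show: there exist $A' \subseteq A$ and $B' \subseteq B$ with $|A'|, |B'| \gg K^{-O(1)} n$ such that every pair $(a, b') \in A' \times B'$ is joined by at least $K^{-O(1)} n^2$ paths $a \sim b \sim a' \sim b'$ with all three edges in $G$. To get this, one first discards vertices of degree $\ll n/K$, passing to a subgraph in which all surviving degrees are $\gg n/K$ while $\gg n^2/K$ edges remain. Counting cherries $a \sim b \sim a'$ (there are $\sum_{b}\deg(b)^2 \geq |G|^2/n \gg n^3/K^2$ of them) and running a second-moment/popularity argument — sampling $b$ with probability proportional to $\deg(b)^2$ — one locates a set $A_0 := N(b)$ within which all but a tiny fraction of pairs $(a,a')$ are ``rich'', i.e. have $|N(a) \cap N(a')| \gg K^{-O(1)} n$. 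After deleting from $A_0$ the few vertices lying in many non-rich pairs and taking $B'$ to consist of the popular $b'$ with $|N(b') \cap A_0|$ at least the average $\gg K^{-O(1)} n$, chaining a rich common neighbour lying in $N(b')$ supplies $\gg K^{-O(1)} n \cdot K^{-O(1)} n = K^{-O(1)} n^2$ length-three paths between any $a \in A'$ and any $b' \in B'$.

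\emph{From paths to sumsets.} For a length-three path $a \sim b \sim a' \sim b'$ with edges in $G$ one has the identity $a + b' = (a + b) - (a' + b) + (a' + b')$, exhibiting $a + b'$ as an element $s_1 - s_2 + s_3$ with $s_1, s_2, s_3 \in S$, and distinct paths yield distinct triples $(s_1,s_2,s_3)$. Hence for every $(a,b') \in A' \times B'$ the element $a + b'$ has at least $K^{-O(1)} n^2$ such representations, so the number of distinct values of $a + b'$ is at most $|S|^3 / (K^{-O(1)} n^2) \leq K^{O(1)} n$; that is, $|A' + B'| \leq K^{O(1)} n$. Since $|A'|, |B'| \gg K^{-O(1)} n$, the Pl\"unnecke--Ruzsa inequalities (or Lemma \ref{lem:sumset} together with Ruzsa's triangle inequality) then give $|A' - B'| \leq |A'+B'|^3/(|A'||B'|) \leq K^{O(1)} n = K^{O(1)} |A|$, which is the asserted bound.

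\emph{The main obstacle.} The real content lies entirely in the graph lemma, and specifically in arranging that \emph{every} pair from $A' \times B'$ (not merely most such pairs) has many length-three connections. The difficulty is that the number of non-rich partners of a fixed vertex can a priori exceed the typical size of a neighbourhood, so one cannot simply pigeonhole on each side independently; the two refinements must be performed in a coordinated way — as in Gowers's original dependent-random-choice-style argument — so that the globally exceptional rows and columns are removed without the prunings interfering, while all losses are kept polynomial in $K$ through both this double refinement and the preliminary degree truncation. Once the graph lemma is in hand, the passage to the sumset bound is routine.
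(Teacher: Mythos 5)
The paper does not prove this lemma: it is the standard Balog--Szemer\'{e}di--Gowers theorem, and the paper simply cites it as Theorem 2.29 of Tao--Vu \cite{TV}. There is therefore no in-paper argument to compare against, but your sketch follows the usual Gowers route (the graph-theoretic refinement yielding two dense subsets all of whose cross pairs are joined by $\gg K^{-O(1)}n^2$ paths of length three, the identity $a+b'=(a+b)-(a'+b)+(a'+b')$ to control $|A'+B'|$, and then Pl\"unnecke--Ruzsa/Ruzsa triangle to pass from $|A'+B'|$ to $|A'-B'|$), which is essentially the argument given in \cite{TV}. Two small remarks: (i) the injectivity of paths $\mapsto$ triples that your count relies on does hold (given $a,b'$, one recovers $b=s_1-a$ and $a'=s_2-b$ from the triple $(s_1,s_2,s_3)$), and (ii) the inequality $|A'-B'|\le |A'+B'|^3/(|A'||B'|)$ that you invoke is indeed obtainable by one application of Ruzsa's triangle inequality together with Pl\"unnecke--Ruzsa to control $|B'+B'|$, so the final step is sound. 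The only part left at a genuine sketch level is the graph lemma itself; as you say, that is where the real content is, and a full write-up would need to carry out the dependent-random-choice/popularity argument carefully so that the two refinements of $A$ and $B$ don't interfere and all losses remain polynomial in $K$.
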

Finally, we recall the trivial incidence inequality:
\begin{equation}\label{eq:trivInc}
\left|I(P,L)\right| \leq \min\left( |P|^{1/2}|L| +|P|, |P||L|^{1/2}  +|L| \right).
\end{equation}
\begin{lemma}\label{lem:algstruct}Let $A,B \subset \mathbb{F}$ such that $|A|,|B| \leq N^{1/2}$ and assume that
\[ | \{y, x_0, x_1) \in B \times A \times A : (1-y) x_0 + yx_1 \in A; y \neq 0,1 \} | \geq K^{-1} N^{3/2}. \]
Then, there exists a subfield $G$ of $\mathbb{F}$, elements $x, \tau \in \mathbb{F}$ ($x \neq 0$), and a set $X \subseteq \mathbb{F}$ such that $|G| \leq CK^{C}|A|$,  $|X| \leq C K^{C}$ and
$A \subseteq (x \cdot G+\tau) \cup X$.
\end{lemma}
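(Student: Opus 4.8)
The plan is to follow the argument of Bourgain, Katz and Tao \cite{BKT}, the only substantive change being that one invokes the general-field sum--product theorem (Theorem~\ref{lem:sumprod}) in place of its prime-field predecessor; the extra translation $\tau$ in the conclusion appears because the hypothesis is invariant under affine changes of variable in $A$, which I will exploit and then undo. Throughout, implied powers of $K$ and implied constants are absolute and feed into the final $C$.

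First I would reduce. The case $|A|\le CK^{C}$ is trivial (take $X=A$, $G$ the prime subfield), so assume $|A|$ large. For $x_{0}\ne x_{1}$ the parameter $y$ is determined by the value $x_{2}\in A$ it produces, via $y=\tfrac{x_{2}-x_{0}}{x_{1}-x_{0}}$, while the diagonal $x_{0}=x_{1}$ contributes at most $|A||B|$, which is negligible; since the count is at most $|A|^{3}$ we get $|A|\gtrsim K^{-O(1)}N^{1/2}$, hence $|A|\asymp|B|\asymp N^{1/2}$ up to factors $K^{O(1)}$, and
\[
 \Bigl|\bigl\{(x_{0},x_{1},x_{2})\in A^{3}\ :\ x_{0}\ne x_{1},\ \tfrac{x_{2}-x_{0}}{x_{1}-x_{0}}\in B\bigr\}\Bigr|\ \gtrsim\ K^{-O(1)}|A|^{3}.
\]
(With the grid $P=A\times A$ and the pairwise-distinct lines $\{(u,v):(1-y)u+yv=x_{2}\}$, $y\in B\setminus\{0,1\}$, $x_{2}\in A$, this is exactly $|I(P,L)|\gtrsim K^{-O(1)}|P|^{3/2}$ with $|P|\asymp N\gtrsim|L|$, so the incidence machine of \cite{BKT} applies --- and, the point set being a genuine grid, much of the regularization in \cite{BKT} is automatic.) The displayed count involves only ratios of differences in $A$, so it is unchanged under an affine map $z\mapsto c_{1}z+c_{2}$ of $A$; pigeonholing a popular pair $(x_{0},x_{1})$ and sending it to $(0,1)$, I arrange $0,1\in A$ and $|A\cap B|\gtrsim K^{-O(1)}|A|$, recording the inverse affine map for the very end.

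Next I would extract structure on a large subset of $A$ in two steps. For the \emph{multiplicative} structure, the triples with $x_{0}=0$ give $\gtrsim K^{-O(1)}|A|^{2}$ pairs $(x_{1},x_{2})\in(A\setminus\{0\})^{2}$ with $x_{2}/x_{1}\in B$; grouping by the ratio and applying Cauchy--Schwarz bounds the multiplicative energy of $A\setminus\{0\}$ below by $K^{-O(1)}|A|^{3}$, and the Balog--Szemer\'{e}di--Gowers theorem (Lemma~\ref{lem:bsg}, applied in $\mathbb{F}^{\times}$) together with the Ruzsa triangle and Pl\"{u}nnecke--Ruzsa inequalities produces $A_{1}\subseteq A\setminus\{0\}$ with $|A_{1}|\gtrsim K^{-O(1)}|A|$ and $|A_{1}A_{1}|\lesssim K^{O(1)}|A_{1}|$. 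For the \emph{additive} structure, pigeonholing a popular $y^{\ast}\in B\setminus\{0,1\}$ out of the displayed count gives $\gtrsim K^{-O(1)}|A|^{2}$ pairs $(x_{0},x_{1})$ with $(1-y^{\ast})x_{0}+y^{\ast}x_{1}\in A$, which is a Balog--Szemer\'{e}di--Gowers configuration for $(1-y^{\ast})A$ and $y^{\ast}A$ whose restricted sumset lies in $A$ (Lemma~\ref{lem:bsg}); rescaling and Ruzsa calculus (cf.\ Lemma~\ref{lem:sumset}) give $A''\subseteq A$ with $|A''|\gtrsim K^{-O(1)}|A|$ and $|A''+A''|\lesssim K^{O(1)}|A''|$.

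\textbf{The main obstacle} is now to put \emph{both} structures on one set: $A_{1}$ and $A''$ are only positive-proportion (up to $K^{O(1)}$) subsets of $A$, so they need not meet, and one cannot simply intersect them. Following \cite{BKT}, one re-runs the additive extraction \emph{inside} $A_{1}$; the delicate point is that the displayed triple-count must survive restricting all three coordinates to $A_{1}$ with only a $K^{O(1)}$ loss, which forces the popular pair and the popular $y^{\ast}$ to be chosen relative to $A_{1}$ and the pigeonholing to be organised so that every passage to a subset costs only a power of $K$. This yields a single $\tilde A\subseteq A$, $|\tilde A|\gtrsim K^{-O(1)}|A|$, with $|\tilde A+\tilde A|,\,|\tilde A\tilde A|\lesssim K^{O(1)}|\tilde A|$. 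Theorem~\ref{lem:sumprod} applied to $\tilde A$ then gives a subfield $G$, a nonzero $\xi$, and a set $X_{0}$ with $|G|\lesssim K^{O(1)}|A|$, $|X_{0}|\lesssim K^{O(1)}$ and $\tilde A\subseteq\xi G\cup X_{0}$. Finally one upgrades from $\tilde A$ to all of $A$: feeding the now essentially-structured bulk back into the relation $\tfrac{x_{2}-x_{0}}{x_{1}-x_{0}}\in B$ (using $|A\cap B|\gtrsim K^{-O(1)}|A|$, so that $B$ is controlled as well) forces, by a further iteration as in \cite{BKT}, all but $\lesssim K^{O(1)}$ elements of $A$ into one translate of $\xi G$ --- this is the second place where care is needed to keep all losses polynomial in $K$. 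Undoing $z\mapsto c_{1}z+c_{2}$ converts this into $A\subseteq(x\cdot G+\tau)\cup X$ with $x=c_{1}^{-1}\xi\neq0$, $\tau=-c_{2}/c_{1}$ and $X$ the bounded exceptional set, $|G|\le CK^{C}|A|$, $|X|\le CK^{C}$, as required.
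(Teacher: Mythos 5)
Your road map (popularity pigeonholing, Balog--Szemer\'edi--Gowers, Ruzsa calculus, then the sum--product theorem, Theorem \ref{lem:sumprod}) is the same as the paper's in outline, but the step you yourself flag as the main obstacle is exactly where the proposal has a genuine gap, and your proposed fix is not the one that makes the argument work. After extracting the multiplicatively structured BSG output $A_{1}$ you propose to ``re-run the additive extraction inside $A_{1}$,'' which presupposes that the original triple count survives restriction of the relevant coordinate to $A_{1}$ with only $K^{O(1)}$ loss. Nothing guarantees this: Lemma \ref{lem:bsg} gives no control over \emph{which} elements it retains, so the triples witnessing the hypothesis may be supported entirely off $A_{1}$, and ``choosing the popular pair and the popular $y^{\ast}$ relative to $A_{1}$'' is not available until you already know the restricted count is large -- which is the very point at issue. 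The paper's mechanism is different and is the missing idea: before any BSG step it passes to the set $A'$ of elements $x_{1}$ that are \emph{individually} popular (each having at least $K^{-3}N$ representations). Because popularity is an element-wise property intrinsic to the count, the count restricted to $x_{1}$ in \emph{any} large subset of $A'$ remains $\gg K^{-O(1)}N^{3/2}$ for free; moreover the additive structure is recorded at the ambient scale, as a bound $\ll K^{O(1)}N^{1/2}$ on the sumset, so it is inherited by every subset of size $\gg K^{-O(1)}N^{1/2}$. The multiplicative pigeonholing (on $x_{0}$, followed by translation and multiplicative BSG plus Lemma \ref{lem:sumset}) is then run on the count in which $x_{1}$ already ranges over the additively structured set inside $A'$, so the multiplicative output is nested inside it by construction and carries both a small sumset and a small product set into Theorem \ref{lem:sumprod}. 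It is this nesting-by-construction, enabled by the element-wise popular set, rather than an a posteriori intersection or re-run, that puts the two structures on one set; your sketch omits the popular-set device entirely, and without it the combination step fails.

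A secondary point: your closing ``upgrade'' from the structured subset $\tilde A$ to all of $A$ minus $CK^{C}$ elements is likewise only asserted, not argued; note that the paper's own proof concludes at this stage simply by applying Theorem \ref{lem:sumprod} to the nested structured subset, so if you intend to establish the lemma in the strong containment form stated you must supply that bootstrapping iteration explicitly rather than cite it as routine.
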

\begin{proof}Let $A' \subset A$ be the set of $x_1 \in A$ such that:
\[ | \{(y, x_0) \in B \times A  : (1-y) x_0 + yx_1 \in A; y \neq 0,1 \} | \geq  K^{-3 } N. \]
Since $|A| \leq  N^{1/2}$, we have that
\begin{equation}\label{eq:alginc}
| \{(y, x_0, x_1) \in B \times A \times A' : (1-y) x_0 + yx_1 \in A; y \neq 0,1 \} | \geq C K^{-1} N^{3/2}.
\end{equation}
Moreover,
\[|A'| \geq C K^{-1} N^{3/2} / |A||B| \geq CK^{-1} N^{1/2}.\]
Now by the pigeonhole principle, there exists $y_0 \in B$ ($y_0 \neq 0,1$) such that
\[ | \{(x_0, x_1) \in  A \times A' : (1-y_{0}) x_0 + y_{0}x_1 \in A\} | \geq C K^{- 1} N^{3/2}/|B| \geq C K^{-1} |N|.\]
This implies that
\[ | \{(x_0, x_1) \in  A \times A' : (1-y_{0}) x_0 + y_{0}x_1 \in A\} |  \geq C K^{-1} |A||A'|.\]
Now by Lemma \ref{lem:bsg} we may find a subset $(1-y_0) \tilde{A}$ of $(1-y_0)A$ and $y_0 A''$ of $y_0 A'$ such that $|\tilde{A}|, |A''|\geq C_{1} K^{-C_{1}}N^{1/2}$ and satisfying
\[ |(1-y_0) \tilde{A} - y_0 A' | \leq C_2 N^{1/2}  K^{C_2}.\]
Applying Lemma \ref{lem:sumset} we further may conclude that
$|A' + A'| \leq C_3 K^{C_3} N^{1/2}.$
Again applying the pigeonhole principle to (\ref{eq:alginc}), we see that there must exist $x_0 \in A$ such that
\[| \{ (y,x_1) \in B \times A' : (1-y) x_0 + yx_1 \in A ; y \neq 0,1 \}| \geq C K^{-1} N .\]
By translating $x_0,x_1,A,A'$ we may assume that $x_0=0$. We let $A_{*}', A_{*}$ denote the translations of $A'$ and $A$, respectively and we have that
\[\{ (y,x_1) \in B\setminus \{0\} \times A_{*}'\setminus \{0\} :  yx_1 \in A' ; y \neq 0,1 \}| \geq CK^{-1} N   \]
since that contributions at $0$ are easily seen to be of a lower order. Applying Lemmas \ref{lem:bsg} and \ref{lem:sumset} in multiplicative form, there exists a subset $A'' \subseteq A_{*}'\setminus \{0\}$ with $|A''| \geq C_{4}K^{-C_{4}}N^{1/2}$ such that
\[| A'' \cdot A'' | \leq C_{5}K^{C_5} N^{1/2}.\]
On the other hand $|A'' + A''| \leq C_3 K^{C_3} N^{1/2}$ still holds. Applying the Sum-product Theorem (Theorem \ref{lem:sumprod}) now completes the proof.
\end{proof}
\begin{lemma}\label{lem:gridtoconc}Let $A,B \subset \mathbb{F}$ such that $|A|,|B| \leq N^{1/2}$. Consider the Cartesian product $A \times B \subseteq \mathbb{F}^2$, and let $P \subset A \times B$ and $L$ a set of lines such that $|P|,|L| \leq N$. Then either
\[ |I(P,L)| \leq K^{-1} N^{3/2} \]
or there exists a large constant $C$, subfields $G_{0},G_{1}$ of $\mathbb{F}$, non-zero elements $x_{0}, x_{1}, \tau_{0}, \tau_{1}, \in \mathbb{F}$, and sets $X_{0}, x_{1} \subseteq \mathbb{F}$ such that $|G_{0}|,|G_{1}| \leq C K^{C}|A|$,  $|X_{0}|,|X_{1}| \leq C K^{C}$ and
$A \subseteq (x_{0} \cdot G_{0}+\tau_{0}) \cup X_{0}$, $B \subseteq (x_{1} \cdot G_{1}+\tau_{1}) \cup X_{1}$.
\end{lemma}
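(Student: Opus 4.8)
The argument will follow the original approach of Bourgain, Katz and Tao: a collinear‑triples count inside the grid $A\times B$ is used to manufacture the hypothesis of Lemma~\ref{lem:algstruct}, which is then applied once for each coordinate axis. Suppose we are not in the first alternative, so $|I(P,L)| > K^{-1}N^{3/2}$, and assume $N$ exceeds a fixed power of $K$ (in the remaining range $|A|,|B|\le N^{1/2}$ are bounded by a fixed power of $K$ and the structural conclusion is then immediate, e.g.\ with $X_0=A$, $X_1=B$). A vertical line meets $P\subseteq A\times B$ only over one of the $\le|A|\le N^{1/2}$ columns of the grid, and then in at most $|B|\le N^{1/2}$ points; the same holds for horizontal lines, so vertical and horizontal lines together carry $\le 2N$ incidences. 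Discarding them leaves a set $L'$ of ``slanted'' lines with $|I(P,L')|\ge\tfrac12 K^{-1}N^{3/2}$.

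Next I would count ordered collinear triples. By the power‑mean inequality $\sum_{\ell\in L'}|P\cap\ell|^3 \ge |L'|^{-2}|I(P,L')|^3 \gg K^{-3}N^{5/2}$, while $\sum_{\ell\in L'}|P\cap\ell|^2 \le |I(P,L')| + |P|^2 \ll N^2$ (two distinct points lie on at most one line of $L'$), so for $N$ a large enough power of $K$ the number of ordered triples $(p_0,p_1,p_2)$ of \emph{distinct} points of $P$ on a common line of $L'$ is $\gg K^{-3}N^{5/2}$. Writing $p_i=(a_i,b_i)$, for each such triple there is a unique $y\in\mathbb F\setminus\{0,1\}$ with $p_2=(1-y)p_0+yp_1$, i.e.\ $a_2=(1-y)a_0+ya_1$ and $b_2=(1-y)b_0+yb_1$; here $a_i\in A$, $b_i\in B$, and since the line is neither horizontal nor vertical the differences $a_1-a_0$, $b_1-b_0$ are nonzero and $y=(b_2-b_0)/(b_1-b_0)$.

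The crux is to confine $y$ to a small set. There are at most $|B|^2\le N$ pairs $(b_0,b_1)$, so we may fix $b_0\ne b_1$ in $B$ for which $\gg K^{-3}N^{5/2}/N=K^{-3}N^{3/2}$ of these triples have $p_0,p_1$ with second coordinates $b_0,b_1$. For those triples $y$ lies in $B_y:=\{(b-b_0)/(b_1-b_0):b\in B\}$, which satisfies $|B_y|\le|B|\le N^{1/2}$, and the map $(p_0,p_1,p_2)\mapsto(y,a_0,a_1)$ is injective (given $(y,a_0,a_1)$ and the frozen $b_0,b_1$, all three points are determined). Hence there are $\gg K^{-3}N^{3/2}$ triples $(y,a_0,a_1)\in B_y\times A\times A$ with $y\ne0,1$ and $(1-y)a_0+ya_1=a_2\in A$ --- precisely the hypothesis of Lemma~\ref{lem:algstruct}, applied with its set $B$ taken to be $B_y$ and its parameter $K$ replaced by a fixed power of $K$. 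It yields a subfield $G_0\subseteq\mathbb F$, an element $x_0\ne0$, an element $\tau_0$, and a set $X_0$ with $|G_0|\le CK^C|A|$, $|X_0|\le CK^C$, and $A\subseteq(x_0\cdot G_0+\tau_0)\cup X_0$. Running the identical argument on the reflected configuration $\{(b,a):(a,b)\in P\}\subseteq B\times A$ with the reflected lines --- a transformation preserving incidences and exchanging the two coordinates --- gives $B\subseteq(x_1\cdot G_1+\tau_1)\cup X_1$ with the analogous bounds, which is the assertion.

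I expect the third paragraph to carry the real content. A naive count leaves $y$ ranging over a ratio set of size $\sim|B|^3\sim N^{3/2}$, hopelessly large for Lemma~\ref{lem:algstruct}; freezing the pair $(b_0,b_1)$ shrinks this to $|B|\le N^{1/2}$ at the cost of exactly the factor $|B|^2\le N$ in the count, which is affordable precisely because the triple count is $\sim N^{5/2}$ rather than $\sim N^{3/2}$. The remaining work --- verifying that the non‑degeneracy conditions ($y\ne0,1$, distinctness of the points, nonvanishing denominators) survive each reduction, and keeping all losses polynomial in $K$ --- is routine bookkeeping.
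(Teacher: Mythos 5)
Your proof is correct and follows essentially the same route as the paper's: discard axis-parallel lines, lower-bound the number of ordered collinear triples, pigeonhole to a single ordered pair of rows $(b_0,b_1)$, feed the resulting count into Lemma~\ref{lem:algstruct} (with the affine normalization $b\mapsto(b-b_0)/(b_1-b_0)$ absorbed into the ratio set $B_y$), and reflect coordinates to handle $B$. The only cosmetic difference is that you reach the collinear-triple bound directly via the power-mean inequality on $\sum_{\ell}|P\cap\ell|^3$, whereas the paper first prunes to lines carrying $\geq C K^{-2}N^{1/2}$ points and then applies the Cauchy--Schwarz argument from \cite{BKT} to locate a popular row pair; these accomplish the same thing.
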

\begin{proof}We assume that $|I(P,L)| \geq K^{-1} N^{3/2} $. By symmetry it suffices to prove the conclusion only for the set $A$. Let $L_0 \subseteq L$ be the set of lines which are not horizontal. Since the number of incidences involving the lines in $L \setminus L_{0}$ is at most $N$, we may assume that
\[ | \{(p,l) \in P \times L_{0} : p \in L \}| \geq \frac{1}{2} K^{-1} N^{3/2}, \]
for large $N$.  Furthermore, if we let $L_{1}$ denote the lines in $L_{0}$ such that
\[ | \{(p,l) : p \in \ell \}| \geq C_{2} K^{-2} N^{1/2} \]
then we have that
\[ | \{(p,l) \in P \times L_{1} : p \in L \}| \geq C_{3} K^{-1} N^{3/2} .\]
Now we define $P(b)$ to be the intersection of $P$ with the horizontal line at $b$. That is, $P(b) = P \cap (A \times \{b\} ) $. Since $L_{1}$ contains no horizontal lines we have that
\[ | \{ (b,l) \in B \times L_{1} : P(b) \cap l \neq \emptyset  \}|  = | \{(p,l) \in P \times L_{1} : p \in L \}| \geq C_{3} K^{-1} N^{3/2}. \]
Now by an easy argument involving the Cauchy-Schwarz inequality (see Lemma 5.1 of \cite{BKT} for a detailed proof) it follows, for some large universal constant $C_4$, that
\[ | \{ (b,b', l) \in B \times B \times L_{1} : P(b) \cap l \neq \emptyset, P(b') \cap l \neq \emptyset  \}|  = | \{(p,l) \in P \times L_{1} : p \in L \}| \geq C_{4}^{-1} K^{-C_4} N^{2}. \]
Since $|B| \leq N^{1/2}$, we may now find $b,b' \in B$ ($b \neq b'$) such that
\[| \{ l \in  L_{1} : P(b) \cap l \neq \emptyset, P(b') \cap l \neq \emptyset  \}| \geq C_{5}^{-1} K^{-C_{5}} N   \]
Now we let $\tau(y)=(b'-b)^{-1}(y-b)$ denote an affine transformation of $\mathbb{F}$ and consider the mapping of $B$ under $\tau$, $\tau(B)$. Clearly, $\tau(b)=0$ and $\tau(b')=1$. We define $\tau(L_{1})$ by replacing the line $x=cy+d$ with $x=c\tau^{-1}(y)+d$. Clearly, the new set of lines $\tau(L_{1})$ continues to exclude horizontal lines. Now if we define $\tau(P) \subset A \times \tau(B)$ to be the image of $P$ under the mapping $(x,y)\rightarrow (x, \tau(y))$. We see that this $\tau$ transformation preserves incidences. Now each line in $\tau(L_{1})$ contains $\geq C_{2} K^{-2} N$ points of the form $(x,y)$ in $\tau(P_{0})$ all but at most $2$ of which have $y \neq 0,1$ (since $\tau(L_{1})$ contains no horizontal lines).  Thus
\[ | \{ (x,y,l) \in A \times \tau(B) \times L_{1} : P(0) \cap l \neq \emptyset; P(1) \cap l \neq \emptyset; (x,y) \in l  ; y \neq 0,1  \}|  \geq C_{6}^{-1} K^{-C_6} N^{3/2}. \]
In other words,
\[ | \{ (x,y,l,x_{0},x_{1}) \in A \times \tau(B) \times L_{1} \times A \times A : (x_0,0), (x,y), (x_1,1) \in l ; y \neq 0,1  \}|  \geq C_{6}^{-1} K^{-C_6} N^{3/2}. \]
Note that the points $(x_0,0)$ and $(x_1,1)$ determine $l$ and $(x,y)$ must satisfy
$x= x_0+(x_1 -x_0)y$.
Thus we may rewrite the above as
\[ | \{y, x_0, x_1) \in B \times A \times A : (1-y) x_0 + yx_1 \in A; y \neq 0,1 \} | \geq C_{6} K^{-C_6} N^{3/2}. \]
Now the proof is complete by applying Lemma \ref{lem:algstruct}.
\end{proof}
\begin{lemma}\label{lem:inctogrid}Let $P \subseteq \mathbb{F}^2$ and $L$ a set of lines in $\mathbb{F}^2$ such that $|P|=|L|=N$. Furthermore, assume that
\begin{equation}\label{eq:inHyp}
|I(P,L)| \geq  K^{-1} N^{3/2}.
\end{equation}
Then there exists a large constant $C$, sets $A,B \subseteq \mathbb{F}$ such that $|A|,|B| \leq CK^{C}N^{1/2}$, subsets $P' \subseteq P$ and $L' \subseteq L$ such that $|P'|,|L'| \geq C^{-1} K^{-C} N$, and a projective transformation $T$ such that $T(P') \subseteq A \times B$. Moreover,
\begin{equation}
|I(P',L')| \geq C^{-1} K^{-C} N^{3/2}.
\end{equation}
%such that for each line $l \in L'$ , we have $| l \cap P'| \leq C_{2} K^{C_2 }  N^{1/2}$. Moreover, for each point $p\in P'$ we have $C^{-1}_{3 }K^{-C_{3}} N^{1/2}  \leq   | l \in L_{2} : p \in l | \leq C_{3 }K^{C_{3}} N^{1/2} $.
\end{lemma}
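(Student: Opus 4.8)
The plan is to carry out the Bourgain--Katz--Tao ``incidence implies grid'' argument, following the expositions in \cite{BKT} and \cite{Dvir}. First I would run the usual dyadic pigeonholing: starting from $|I(P,L)| \geq K^{-1}N^{3/2}$, alternately discard the points of $P$ lying on few lines and the lines of $L$ meeting $P$ in few points, and in addition discard the ``super-fat'' lines meeting $P$ in more than $C_0K^{C_0}N^{1/2}$ points --- of which there are only $\ll K^{-C_0}N^{1/2}$ by the trivial incidence bound (\ref{eq:trivInc}), so that they carry a negligible share of the incidences. This produces $P_*\subseteq P$ and $L_*\subseteq L$ with $|I(P_*,L_*)|\gg K^{-O(1)}N^{3/2}$ for which every point of $P_*$ lies on $\gg K^{-O(1)}N^{1/2}$ lines of $L_*$ and every line of $L_*$ meets $P_*$ in between $\gg K^{-O(1)}N^{1/2}$ and $\ll K^{O(1)}N^{1/2}$ points. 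Since the pencil of $L_*$-lines through a fixed point consists of lines pairwise disjoint off that point, the lower richness bound forces each such pencil to have $\ll K^{O(1)}N^{1/2}$ members.

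Next, for $p\in P_*$ let $V_p$ denote the set of $q\in P_*$ collinear with $p$ along some line of $L_*$; note $q\in V_p$ iff $p\in V_q$. Two applications of Cauchy--Schwarz give first $\sum_{\ell\in L_*}\binom{|\ell\cap P_*|}{2}\geq |I(P_*,L_*)|^2/(2|L_*|)-O(|I(P_*,L_*)|)\gg K^{-O(1)}N^2$, hence $\sum_p|V_p|\gg K^{-O(1)}N^2$, and then $\sum_{p_1\neq p_2}|V_{p_1}\cap V_{p_2}|=\sum_q|V_q|^2-\sum_q|V_q|\gg K^{-O(1)}N^3$, so there exist distinct $p_1,p_2\in P_*$ with $|V_{p_1}\cap V_{p_2}|\gg K^{-O(1)}N$. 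I would then take a projective transformation $T$ sending $p_1$ to the point at infinity in the vertical direction and $p_2$ to the point at infinity in the horizontal direction; each line of $L_*$ through $p_1$ other than $\overline{p_1p_2}$ maps to a vertical line, each line of $L_*$ through $p_2$ other than $\overline{p_1p_2}$ maps to a horizontal line, and $\overline{p_1p_2}$ maps to the line at infinity. The one genuinely non-routine point is that any $q\in V_{p_1}$ lying on $\overline{p_1p_2}$ with $q\neq p_1$ must be joined to $p_1$ by $\overline{p_1p_2}$ itself, so there are only $\ll K^{O(1)}N^{1/2}$ of them; hence $Q:=(V_{p_1}\cap V_{p_2})\setminus\overline{p_1p_2}$ still has $|Q|\gg K^{-O(1)}N$ and loses no point under $T$. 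Letting $A$ be the $x$-coordinates of the $\ll K^{O(1)}N^{1/2}$ vertical lines from the $p_1$-pencil and $B$ the $y$-coordinates of the horizontal lines from the $p_2$-pencil, every $q\in Q$ lies on a line of each pencil, so $T(q)\in A\times B$; thus $T(Q)\subseteq A\times B$ with $|A|,|B|\leq CK^CN^{1/2}$.

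Finally I would recover the incidence bound. Set $P':=Q$. Each point of $Q\subseteq P_*$ lies on $\gg K^{-O(1)}N^{1/2}$ lines of $L_*$, so $|I(Q,L_*)|\gg K^{-O(1)}N^{3/2}$; and since $T(Q)\subseteq A\times B$ with $|A|,|B|\ll K^{O(1)}N^{1/2}$, every line meets $Q$ in $\ll K^{O(1)}N^{1/2}$ points. A last pigeonholing keeps the lines $L'\subseteq L_*$ individually meeting $Q$ in at least $\sim K^{-O(1)}N^{1/2}$ points, which retain at least half the incidences with $Q$; then $|I(P',L')|\gg K^{-O(1)}N^{3/2}$, dividing by the per-line bound gives $|L'|\gg K^{-O(1)}N$, and $|P'|=|Q|\gg K^{-O(1)}N$. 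Absorbing all powers of $K$ and all constants into a single large $C$ completes the argument.

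The step I expect to demand the most care is the bookkeeping throughout the pigeonholing: one must check that every ``$\gg K^{-O(1)}N^{3/2}$'' lower bound genuinely survives the removal of super-fat lines, the passage to $P_*,L_*$, and the subtraction of the diagonal term $\sum_q|V_q|$, so that the polynomial-in-$K$ losses do not accumulate fatally. Conceptually, however, the crux is the two-point popularity argument of the second paragraph together with the observation that $V_{p_1}$ meets $\overline{p_1p_2}$ in only $O(K^{O(1)}N^{1/2})$ points --- this is exactly what makes it harmless to send $p_1$ and $p_2$ to infinity simultaneously and is where the actual content lies.
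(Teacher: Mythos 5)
Your proposal follows essentially the same route as the paper's proof: regularize point and line degrees, run the two-point popularity (bush) argument via Cauchy--Schwarz to find $p_1,p_2$ with $|V_{p_1}\cap V_{p_2}|\gg K^{-O(1)}N$, send these two points to infinity by a projective map (losing only the $O(K^{O(1)}N^{1/2})$ points on the line joining them), read off the grid $A\times B$ from the two pencils, and recover the incidence count from the point-degree lower bounds. The only slip is cosmetic: the number of super-fat lines is $\lesssim K^{-C_0}N$, not $\lesssim K^{-C_0}N^{1/2}$, but the trivial incidence bound still shows these lines carry $\lesssim K^{-C_0/2}N^{3/2}+N$ incidences, so the negligibility claim --- the only thing you actually use --- stands.
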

\begin{proof}First we let $L_{1}\subseteq L$ denote the subset of lines each of which contain at least $K^{-2}|N|^{1/2}$ points. Clearly,
\[ |I(P,L_{1})| \geq C_{3}^{-1} K^{-1} N^{3/2}.\]
Moreover we may assume that $|L_{1}| \geq C_{4}^{-1} K^{-1} N  $, since otherwise an application of (\ref{eq:trivInc}) would contradict the hypothesis (\ref{eq:inHyp}). Next, we let $L_{2} \subseteq L_{1}$ be the subset of lines each containing at most $K^2 N^{1/2}$ points. Consider $L^{*} = L_{1} \setminus L_{2}$. Now $|L^{*}| \leq K^{-1}N^{3/2} / K^2 N^{1/2} = K^{-3}N $. Thus, $|L_{2}| \geq C_{5}^{-1} |L_1| \geq C_{5}^{-1} K^{-C_{5}} N  $. Now since each $l \in L_{2} \subseteq L_{1}$ intersects at least $K^{-2}|N|^{1/2}$ points, we have that
\[|I(P, L_{2})| \geq C_{6}^{-1} K^{-C_{6}}N^{3/2} \]
where each $l \in L_{2}$ satisfies $K^{-2}|N|^{1/2} \leq  |l \cap P| \leq K^2 N^{1/2}$. Now an analogous argument allows us to replace $P$ with $P_{1} \subseteq P$, $|P_{1}| \geq C_{7}^{-1} K^{-C_{7}} N^{1/2} $, such that for each $p \in P_{1}$ we have $K^{-C_{8}}|N|^{1/2} \leq |\{\ell \in L_{2} : p \in L\}| \leq   K^{C_{8}} N^{1/2}$. Since this procedure only removes points, clearly each line satisfies $ |l \cap P_{1}| \leq K^2 N^{1/2}$ (however we will no longer claim the lower bound on this intersection).
In summary, we may conclude that there is a constant $\tilde{C}$ such that
\[|I(P_{1}, L_{2})| \geq \tilde{C}^{-1} K^{-\tilde{C}}N^{3/2} \]
for a set of lines $|L_{2}| \geq \tilde{C}^{-1}K^{-\tilde{C}} N$ each of which satisfies $| l \cap P_{1}| \leq \tilde{C} K^{\tilde{C}}  N^{1/2}$, and a set of points $|P_{1}| \geq \tilde{C}^{-1} K^{-\tilde{C}} N$ each of which satisfies $\tilde{C}^{-1}K^{-\tilde{C}} N^{1/2}  \leq   | l \in L_{2} : p \in l | \leq \tilde{C} K^{\tilde{C}} N^{1/2} $.
Now let $L(p) := \{ l \in L_{2} : p \in l \}$ and $P(l) := \{p \in P_{1} : p \in l \}$. We now consider what is known as a bush construction. Let $p \in P_1$ and define
\[U(p) := \{p' \in P_{1}: \exists l \in L_{2} \text{ s.t. } p,p' \in l \}.\]
We now estimate the expected size of $|U(p) \cap U(q)|$ over all points $p,q \in P_{1}$. That is
\[  \mathbb{E}_{p,q} |U(p) \cap U(q)| \geq \frac{1}{N^2} \sum_{p,q \in P_{1}} \sum_{r \in P_{1}} \sum_{l_0, l_2 \in L(r)} 1_{p \in l_0} 1_{q \in \l_1} \]
\[=   \frac{1}{N^2} \sum_{r \in P_{1}} \sum_{l_0, l_2 \in L(r)}|P(l_0)||P(l_1)| =  \frac{1}{N^2} \sum_{r \in P_{1}} \left( \sum_{l_0 \in L(r)}|P(l_0)| \right)^2 \]
\[\geq \frac{1}{N^3}  \left( \sum_{r \in P_{1}} \sum_{l \in L(r)} |P(l)| \right)^2  =  \frac{1}{N^3}  \left( \sum_{l \in L_{2}} |P(l)|^2 \right)^2 \geq   \frac{1}{N^5}  \left( \sum_{l \in L_{2}} |P(l)| \right)^4 \]
\[\geq \frac{1}{N^5}   \left(\tilde{C}^{-1} K^{-\tilde{C}} N^{1/2}    \right)^4   = \tilde{C}^{-4} K^{-4\tilde{C} } N. \]
Thus we can find $p_{0},q_{0}$ satisfying $|U(p_{0}) \cap U(q_{0})| \geq \tilde{C}^{-4} K^{-4\tilde{C} } N$. We let $W = U(p_{0}) \cap U(q_{0})$ and embed this set into projective space $\mathbb{P}\mathbb{F}^2$ with the mapping $(x,y) \rightarrow (x,y,1)$. We may now find a projective transformation which maps the points in $\mathbb{P}\mathbb{F}^2$ corresponding to $p_{0}$ and $q_{0}$ to (the equivalence classes represented by) $(1,0,0)$ and $(0,1,0)$, respectively. Now it is possible that we have moved some of the points to the line at infinity. However, this can only happen if all of these points lie on a single line passing through $p_{0}$ and $q_{0}$, which would then have to be in $L_{2}$. However, we then see that there are at most $\leq \tilde{C} K^{\tilde{C} }  N^{1/2}$ such points which can be readily disregarded. Ignoring these points, we may pass back to $\mathbb{F}^2$ via the map $ (x,y,1) \rightarrow (x,y)$. We now see that each of the lines (obtained from those) passing through $p_{0}$ are horizontal and each of the lines (obtained from those) passing through $q_{0}$ are vertical. Thus we have a projective transformation $T$, a large constant $C_{2}$, and a cartesian grid of the form $A \times B$, where $|A|,|B|\leq C_{2 }K^{C_{2}} N^{1/2}$ such that $\tilde{P} := T(W) \cap A \times B $ satisfies  $|\tilde{P}| \geq C_{2}^{-1} K^{-C_{2}} N^{1/2} $.  Moreover, for each $p \in W$ we have that $|L(p)| \geq  K^{-C_{8}}|N|^{1/2} $. Thus if we let $\tilde{L} = T(L_{2})$ we have that
\[ |I(\tilde{P},\tilde{L})| \geq  C_{2}^{-1} K^{-C_{2}}N \times K^{-C_{8}}|N|^{1/2} \geq C^{-1}K^{-C} N^{3/2}.\]
Taking $P'$ and $L'$ to be the preimage of $\tilde{P}$ and $\tilde{L}$, respectively, under the projective mapping $T$ (and, perhaps, worsening some of the constants) completes the proof.
\end{proof}
We now deduce Theorem \ref{thm:inc} from Lemma \ref{lem:gridtoconc} and Lemma \ref{lem:inctogrid}. We start with a set of points and lines with $|P|,|L| =N$ and satisfying $|I(P,L)| \geq K^{-1}N^{3/2}$. Let us define $M:=C^2K^{2C}N$ and $J:=C^{4}K^{4C}$.  Applying Lemma \ref{lem:inctogrid} and taking $L_{*}=T(L')$ and $P_{*}=T(P')$, we have sets $A$ and $B$ such that $|A|, |B| \leq M^{1/2}$ with $P_{*} \subseteq A \times B$ and satisfying $|I(P_{*},L_{*})| \geq J^{-1} M^{3/2}$. Now applying Lemma \ref{lem:gridtoconc} completes the proof.

\section{Acknowledgements}
We would like to thank Kevin Hughes, Terence Tao, and Josh Zahl for various discussions of related topics.

\texttt{M. Lewko, Department of Mathematics, The University of California, Los Angeles}

\textit{mlewko@gmail.com}


\begin{thebibliography}{5}

\bibitem{BKT}
J. Bourgain, N. Katz, T. Tao, A sum-product estimate in finite fields, and applications. Geom. Funct. Anal. 14 (2004), no. 1, 27--57.

\bibitem{BGK}
J. Bourgain, A. Glibichuk and S. Konyagin, Estimates for the number of sums and products and for exponential sums in fields of prime order. J. London Math. Soc. (2) 73 (2006), no. 2, 380--398

\bibitem{Dvir}
Z. Dvir, Incidence Theorems and Their Applications. arXiv:1208.5073.

\bibitem{Dvir2}
Z. Dvir, On the size of Kakeya sets in finite fields. J. Amer. Math. Soc. 22 (2009), no. 4, 1093--1097.

\bibitem{EOT}
J. Ellenberg, R. Oberlin, and T. Tao, The Kakeya set and maximal conjectures for algebraic varieties over finite fields. Mathematika 56 (2010), no. 1, 1--25.

\bibitem{HR}
H. Helfgott and M. Rudnev, An explicit incidence theorem in $\mathbb{F}_{p}$. Mathematika 57(1):135--145, 2011.

\bibitem{IK}
A. Iosevich and D. Koh, Extension theorems for paraboloids in the finite field setting. Math. Z. 266, (2010) no.2 471--487.

\bibitem{IKs}
A. Iosevich and D. Koh, Extension theorems for spheres in the finite field setting. Forum Math. 22 (2010), no. 3, 457--483.

\bibitem{IKq}
A. Iosevich and D. Koh, Extension theorems for the Fourier transform associated with non-degenerate quadratic surfaces in vector spaces over finite fields. Illinois J. Math. 52, no. 2 (2009), 611--628.

\bibitem{Jones1}
T. Jones, An improved incidence bound for fields of prime order. arXiv:1110.4752

\bibitem{Jones2}
T. Jones, Explicit incidence bounds over general finite fields. Acta Arith. 150 (2011), 241--262.

\bibitem{Jones3}
T. Jones, Further improvements to incidence and Beck-type bounds over prime finite fields. arXix:1206.4517

\bibitem{Koh1}
D. Koh, Restriction operators acting on radial functions on vector spaces over finite fields. arXiv:1212.5298

\bibitem{lewko}
A. Lewko and M. Lewko, Endpoint restriction estimates for the paraboloid over finite fields. Proc. Amer. Math. Soc. 140 (2012) 2013--2028.

\bibitem{lewkoKakeya}
M. Lewko, Finite field restriction estimates based on Kakeya maximal operator estimates. arXiv:1401.8011.

\bibitem{MT}
G. Mockenhaupt and T. Tao, Restriction and Kakeya phenomena for finite fields. Duke Math. J. 121 (2004), no. 1, 35--74.

\bibitem{TV}
T. Tao and V. Vu, Additive combinatorics. Cambridge Studies in Advanced Mathematics, 105. Cambridge University Press, Cambridge, 2006.

\bibitem{T}
T. Tao, Recent Progress on the Restriction Conjecture. Fourier analysis and convexity, 217-243, Appl. Numer. Harmon. Anal., Birkhuser Boston, Boston, MA, 2004.

\end{thebibliography}
\end{document}